\newtheorem{theorem}{Theorem}[section]
\newtheorem{thm}[theorem]{Theorem}
\newtheorem{lemma}[theorem]{Lemma}
\newtheorem{proposition}[theorem]{Proposition}
\newtheorem{corollary}[theorem]{Corollary}
\newtheorem*{thm1}{Theorem \ref{thm:simpson}}%[\ref{thm:simpson}]
\newtheorem*{thm2}{Theorem \ref{thm:pull-back_atiyah}}%[\ref{thm:pullback_atiyah}]
\theoremstyle{definition}
\newtheorem{example}{Example}
\newtheorem{definition}[theorem]{Definition}
\theoremstyle{remark}
\newtheorem{remark}[theorem]{Remark}
\newcommand{\bC}{{\mathbb{C}}}
\newcommand{\C}{{\mathbb{C}}}
\newcommand{\bH}{{\mathbb{H}}}
\newcommand{\Z}{{\mathbb{Z}}}
\newcommand{\id}{{\textbf{1}}}
\newcommand{\corO}{\mathscr{O}}
\newcommand{\gotg}{\mathfrak{g}}
\newcommand{\belA}{\mathcal{A}}
\newcommand{\belE}{\mathcal{E}}
\newcommand{\belG}{\mathcal{G}}
\newcommand{\caL}{\mathcal{L}}
\newcommand{\res}{\text{res}}
\newcommand{\End}{\text{End}}
\newcommand{\Der}{\text{Der}}
\newcommand{\cEnd}{\mathscr{E}nd}
\newcommand{\atiyah}{\text{At}}
\newcommand{\Sym}{\text{Sym}}
\newcommand{\GL}{\text{GL}}
\title{Representations of Atiyah algebroids and logarithmic connections}
\author{Pietro Tortella}
\address{Scuola Internazionale di Studi Avanzati Superiori (SISSA)\\
  via Bonomea 265\\
  34136 Trieste, Italy}
\email{tortella@sissa.it}
\begin{document}

\maketitle

%\section{Introduction}

\noindent
\begin{quote}
\small  {\sc Abstract.}   
In this paper, we investigate representations of $\atiyah(N)$, the Atiyah algebroids of a holomorphic line bundles $N$ over a complex manifold $Y$. In particular, we relate $\atiyah(N)$-modules with logarithmic connections through two functors. On the one hand, we use these functors to the define invariants (monodromy) for representations of Atiyah algebroids. On the other hand, this opens the way to use the theory of Lie algebroids to study problems about logarithmic connections; we will give an example of this by showing that the existence of Deligne's extensions of flat connections and the Riemann-Hilbert correspondence for regular flat meromorphic connections may be obtained as pull-back of similar results for $\atiyah(N)$-modules, and, at this level, these results are a direct consequence of the second theorem of Lie.

\end{quote}

\section{Introduction}

Let $N$ be a line bundle over a complex manifold $Y$. As it was originally introduced in \cite{atiyah}, the Atiyah algebroid of $N$ is an object to describe the connections on $N$: it fits in the exact sequence
$$
0 \to \corO_Y \to \atiyah(N) \to T_Y \to 0
$$
and any splitting of this sequence correspond to a connection on $N$. In the theory of Lie algebroids, Atiyah algebroids play a central role, since they describe transitive Lie algebroids, and allow to formulate the definition of $\caL$-connections and related concepts (cf. \cite{mackenzie}).

The other main topic of this paper are logarithmic connections. Let $X$ be a complex manifold and $D$ a smooth divisor in $X$. Logarithmic connections on $(X,D)$ are connections on $X$ holomorphic on $X\setminus D$ and with poles along $D$, whose behavior is very regular. They can be used to describe the topology of $X\setminus D$, since they are related to  representations of $\pi_1(X\setminus D)$. Moreover, they can be used to describe the Riemann-Hilbert correspondence in a geometric way (cf. \cite{deligne}, \cite{sabbah}).

In this paper, we construct two functors that relate representations of $\atiyah(N)$ with logarithmic connections. Thanks to these functors, one is able to use the theory of Lie algebroids to investigate properties of logarithmic connections, and vice-versa.

The first functor is based on the following remark by Carlos Simpson:
\begin{thm1}
Let $X$ be a smooth complex manifold, and $D$ a smooth divisor in $X$. Consider $T_X(-\log D)$, the sheaf of logarithmic tangent vectors.

Then the restriction $T_X(-\log D)_{|D}$ has a natural structure of Lie algebroid over $D$, and it is isomorphic as a Lie algebroid to $\atiyah(N_{D/X})$, the Atiyah algebroid of the normal bundle to $D$ in $X$.
\end{thm1}
From this, one obtains that the restriction to the divisor $D$ induces a functor
$$
\left\{ \begin{array}{c}
 \text{ logarithmic connection on } X  \\
  \text{with poles along } D
\end{array}\right\}
\longrightarrow
\left\{ \atiyah(N_{D/X})\text{-connection} \right\}\ .
$$
A first consequence is that one obtains a new interpretation of the residue of a logarithmic connection, that can be seen as the action of $\corO_D \hookrightarrow \atiyah(N_{D/X})$ on the restriction $(E,\nabla)_{|D}$ that carries the structure of $\atiyah(N_{D/X})$-module.

The other functor that we construct is based on the following: 
\begin{thm2}
Let $Y$ be a complex manifold and $p:N \to Y$ a line bundle on $Y$. 

Then the pull back $p^* \atiyah(N)$ has a natural structure of Lie algebroid over $N$, and it is isomorphic as a Lie algebroid to $T_N(-\log Y)$, where we see $Y$ as a divisor in $N$ via the zero section embedding.
\end{thm2}

Then one obtains that the pull-back $p^*$ induces a functor at the level of modules
$$
\left\{\atiyah(N)\text{-modules} \right\} \longrightarrow 
\left\{ \begin{array}{c}
 \text{ flat logarithmic connection on } N  \\
  \text{with poles along } Y
\end{array}\right\}\ .
$$
When $X=N$ and $D=Y$, the $p^*$ functor is a left adjoint to the restriction functor. %Moreover, for general $X$, if one wants to study the behavior of objects around the divisor $D$, one can replace $X$ by the total space of $N_{D/X}$, the normal bundle to the divisor.

Using $p^*$, one can define the monodromy of a $\atiyah(N)$-module as the monodromy of the associated logarithmic connection. We then study the Riemann-Hilbert correspondence for $\atiyah(N)$-modules, i.e. we study the existence of $\atiyah(N)$-modules having a given monodromy, and classify them. We will find that at the level of $\atiyah(N)$-modules, the Riemann-Hilbert correspondence is a direct consequence of the second theorem of Lie, and that the Riemann-Hilbert correspondence for regular meromorphic connections may be obtained as a pull-back through $p^*$ of the results that we obtain for $\atiyah(N)$-modules.

%We then use this functor to show that, in the setting of $p:N\to Y$ a line bundle as before, the construction of Deligne's canonical extensions and the Riemann-Hilbert correspondence may be obtained as pull-back of similar results for $\atiyah(N)$-modules, and at the level of $\atiyah(N)$-modules these results follow directly from the second theorem of Lie for Lie algebroids/groupoids. 

Recently, in \cite{gualtieri} have been developed techniques to study connections with singularities through Lie algebroids and groupoids related to the problem, that fit the general ideas of this paper. Moreover , in \cite{gualtieri2} and \cite{guillemin} are presented applications of the logarithmic tangent Lie algebroid  to  symplectic and Poisson geometry.

%The study of problems of connections with singularities with techniques of Lie algebroids and Lie groupoids have been considered in \cite{gualtieri}. Other recent studies of geometric problems with the use of the logarithmic tangent Lie algebroid are \cite{gualtieri2}, \cite{guillemin}.

%in a fashion similar to the well known correspondence between representations of the fundamental group of a manifold and flat bundles over it (cf. \cite{gualtieri}). 

%As it was remarked in \cite{gualtieri}, this correspondence may be seen as a consequence of the second theorem of Lie for Lie algebroids and Lie groupoids: let $M$ be a manifold; then a flat bundle on $M$ is the same thing as a representation of the tangent Lie algebroid $T_M$; by the second theorem of Lie, representations of $T_M$ are in correspondence with representations of the fundamental groupoid $\Pi_1(M)$;  since $\Pi_1(M)$ is a transitive Lie groupoid, its representations are in correspondence with the representations of any of its vertex groups, i.e. representations of $\pi_1(M)$.

\vspace{0.4cm}
The paper is organized as follows: in the next section we recall general definition on Lie algebroids and Atiyah algebroids, we describe the $\textsc{s}$-connected $\textsc{s}$-simply connected Lie groupoid integrating the Atiyah algebroids, and gather basic properties of representation of Atiyah algebroids. These are mainly standard results in Lie algebroid theory, and we gather the ingredients and proofs that we need in order to have a self-contained exposition. In Section 3 we recall the construction of the logarithmic complex and logarithmic connections, prove Theorem \ref{thm:simpson} and explore some direct consequence. In Section 4 we prove Theorem \ref{thm:pull-back_atiyah}, define monodromy for representations of $\atiyah(N)$, study the Riemann-Hilbert correspondence for these objects and show how classical results descend from these.

\vspace{0.4cm}

\noindent {\bf Notation.} 
In this paper, we work in the holomorphic category. Most of the results of Section 2 hold also in the real case, and actually this is how they are usually presented; since in the later sections we will work in the holomorphic category, we will state everything in this setting from the beginning. We will sometime omit the adjectives "complex" or "holomorphic" when they are redundant, but these should be always understood unless otherwise stated.

We use the same letter to denote a holomorphic vector bundle and its sheaf of holomorphic sections. For $p:E \to X$ a holomorphic vector bundle over a complex manifold $X$, when we write $s \in E$  we will always mean that $s$ is a local section of $E$ defined on some open set $U$ in $X$. When more sections are involved, we will assume that they are defined over the same open set. For $x\in X$, we denote by $E(x)$ the fiber of $E$ over $x$. To denote a point $v$ of the fiber bundle $E$, we will use the notation $v \in E(x)$, with $x = p(x)$ (or $v \in E(p(v))$).

\vspace{0.4cm}
\noindent {\bf Acknowledgment.} This paper was mainly motivated by discussions with Carlos Simpson, to whom Theorem \ref{thm:simpson} and the consequences are owed. I am very grateful to him for sharing these and other insightful remarks with me. My many thanks also to Michel Granger, Vladimir Rubtsov, Igor Mencattini and Ugo Bruzzo for important remarks and discussions, and to David Martinez Torres and Marco Zambon for pointing me out related references. The main part of this work was carried out during my visit to the LAREMA  UMR 6093 du CNRS D\'epartement de Math\'ematiques, Universit\'e d'Angers
as a University post-doc 1 year fellow, and subsequently during a post-doc at ICMC, Universidade de S\~ao Paulo, with the FAPESP grant 2011/17593-9. I want to thank both the institutions for the support, and my thanks also to SISSA that is supporting me at the moment.

\section{Atiyah algebroids}

\subsection{Generalities on holomorphic Lie algebroids}

Let $Y$ be a complex manifold. A {\em holomorphic Lie algebroid} on $Y$ is a coherent sheaf of $\corO_Y$-modules $\caL$ with a $\corO_Y$-modules morphism $\rho: \caL \rightarrow T_Y$ called the anchor, and a $\bC_Y$-Lie bracket on its sheaf of sections, satisfying the condition $[u,fv] = f[u,v] + \rho(u)(f) v$ for any $f\in\corO_Y$ and $u,v\in \caL$. For the purposes of this paper, we shall assume that $\caL$ is $\corO_Y$-locally free, i.e. a holomorphic vector bundle over $Y$.

For $\caL$ a holomorphic Lie algebroid, we set $\Omega^p_\caL = \bigwedge^p_{\corO_Y} \caL^*$ and call it the sheaf of $\caL$-forms of degree $p$. The Cartan formula that defines the exterior differential of a manifold can be easily adapted to Lie algebroids, and leads to a differential $d_\caL: \Omega^p_\caL \rightarrow \Omega^{p+1}_\caL$ that squares to $0$. Define the cohomology of the holomorphic Lie algebroid by
$$
H^p(\caL;\bC) = \bH^p(Y; \Omega_\caL^\bullet)\ .
$$

A {\em morphism} between two Lie algebroids $\caL$ and $\caL'$ over the same base is a morphism $\phi:\caL\rightarrow \caL'$ of $\corO_Y$-modules compatible both with the anchors and the brackets. If $\caL$ is a Lie algebroid over $Y$ and $\caL'$ is a Lie algebroid over a different complex manifold $Y'$, a morphism from $\caL$ to $\caL'$  is a pair $(\phi,f)$ with $f:Y\rightarrow Y'$ a morphism between the underlying varieties and $\phi: \caL \rightarrow f^* \caL'$ map of vector bundles over $Y$ such that 
\begin{itemize}
\item[-] 
the diagram
\begin{equation*}
\xymatrix{ \caL \ar[r]^{\phi} \ar[d]_{\rho} & f^* \caL' \ar[d]^{f^* \rho'} \\
T_Y \ar[r]^{T_f} & f^*T_{Y'}
}
\end{equation*}
commutes; 
\item[-] using the identification  $f^* \caL' = f^{-1}\caL' \otimes_{f^{-1}\corO_{Y'}} \corO_Y$ and writing $\phi(v) = \sum \phi(v)_i \otimes a^v_i$ for $v\in \caL,\ \phi(v)_i \in f^{-1}\caL'$ and $a^v_i \in \corO_Y$, the following compatibility between the brackets is satisfied:
$$
\phi([v,w]) = \sum [\phi(v)_i,\phi(w)_j]\otimes a^v_ia^w_j + \rho(v)(a^w_j) \otimes \phi(w)_j - \rho(w)(a^v_i) \otimes \phi(v)_i .
$$
\end{itemize}

\vspace{0.4cm}
Clearly, the holomorphic tangent bundle $T_Y$ to a complex manifold $Y$ is naturally a Lie algebroid, with the identity as anchor and commutator of vector fields as Lie bracket. Similarly, any sub-$\corO_Y$-module $\mathcal F \subseteq T_Y$ with $[\mathcal{F}, \mathcal{F}] \subseteq \mathcal{F}$ has an induced Lie algebroid structure. 

\begin{definition}
Let $ P\stackrel{p}{\to} Y$ be a holomorphic principal $G$-bundle over $Y$, for some complex Lie group $G$ with Lie algebra $\gotg$. Consider the exact sequence obtained from the differential of $p$:
\begin{equation} \label{ses:Tp}
0 \to P \times \gotg \to T_P \stackrel{T_p}{\to} p^*T_Y \to 0\ .
\end{equation}
The group $G$ acts naturally on each element of this sequence, and after taking the quotient one obtains the exact sequence 
\begin{equation} \label{ses:atiyah_principal}
0 \to \operatorname{ad}(P) \to \frac{T_P}{G} \to T_Y \to 0\ .
\end{equation}
of vector bundles over $Y$. The {\em Atiyah algebroid} of $P$ is $\atiyah(P) = T_P/G$, with anchor equal to the quotient map of the sequence \eqref{ses:atiyah_principal}, and bracket induced by the bracket of vector fields in $T_P$.
\end{definition}

Remark that the sequence \eqref{ses:Tp} is an exact sequence of Lie algebroids, where $\operatorname{ad}(P)$ is a Lie algebroid with trivial anchor and $\corO_Y$-linear bracket induced by the Lie algebra structure of $\mathfrak{g}$.

In the case $G = \GL(r,\mathbb{C})$, one can describe the Atiyah algebroid of a $\GL(r,\mathbb C)$-principal bundle in terms of the vector bundle associated to the standard representation of $\GL(r,\mathbb{C})$ on $\mathbb{C}^r$. In fact we have:

\begin{definition} \label{def:atiyah_vb}
Let $E$ be a holomorphic vector bundle over $Y$.

The \emph{Atiyah algebroid} of $E$, denoted by $\atiyah(E)$, is the Lie algebroid on $Y$ defined as follows: 
\begin{itemize}
\item as a vector bundle, $\atiyah(E)$ consists of differential operators on $E$ of order one with scalar symbol, i.e. it is the subsheaf of $\cEnd_{\C_Y}E$ consisting of endomorphisms $D$ such that there exist a vector field $\sigma_D \in T_Y$ (also called the {\em symbol} of $D$) satisfying 
$$
D(fs) = f \cdot D s + \sigma_D(f) \cdot s
$$ 
for any $f\in \corO_Y$ and $s\in E$;
\item the anchor is given by the symbol map, namely $D \mapsto \sigma_D$;
\item the Lie bracket is the commutator of the differential operators.
\end{itemize}
\end{definition}

In this case, we have the following analogue of the sequence \eqref{ses:atiyah_principal}:
\begin{equation} \label{ses:atiyah}
0 \rightarrow \cEnd_{\corO_Y} E \rightarrow \atiyah(E) \rightarrow T_Y \rightarrow 0,
\end{equation}
If $P_E$ is the principal $\GL(r,\bC)$-bundle of frames of the vector bundle $E$, one has a natural isomorphism between the Lie algebroids $\atiyah(P_E)$ and $\atiyah(E)$.

\begin{definition}
Let $\caL$ be a Lie algebroid over $Y$ and $E$ a holomorphic vector bundle on $Y$. A \emph{$\caL$-connection} on $E$ is a $\bC_Y$-modules map $\nabla: E \rightarrow E \otimes \Omega^1_\caL$ satisfying the Leibniz rule $\nabla(fs) = f\nabla s + s \otimes d_\caL f$. Equivalently, a	 $\caL$-connection is a map of bundles $\nabla: \caL \rightarrow \atiyah(E)$ such that the diagram 
\begin{equation*}
\xymatrix{ \caL \ar[rr]^\nabla \ar[rd] && \atiyah(E)\ar[dl] \\ &T_Y &
}
\end{equation*}
obtained from the anchors commutes.

We say that the $\caL$-connection $\nabla$ is {\em flat} (we also say that the pair $(E,\nabla)$ is a \emph{representation } of $\caL$, or that $E$ is a  {\em $\caL$-module}) if moreover $\nabla$ is a morphism of Lie algebroids. 
\end{definition}

\subsection{Lie groupoids and integration}

Definitions and results reported in this subsection are standard in the smooth setting. The holomorphic case was studied in \cite{xu_integration}, where it is obtained that essentially the same results hold in the holomorphic context.

\vspace{0.4cm}
A {\em holomorphic Lie groupoid} $\mathcal G$ over a complex manifold $Y$ is a groupoid (i.e. a small category where each arrow is invertible) such that the space of objects is $Y$, the space of arrows, that we still denote by $\mathcal{G}$, has the structure of complex manifolds, the source and target maps $s,t: \mathcal{G} \to Y$ are holomorphic surjective submersions, the identity $e: Y \to \mathcal{G}$ is a holomorphic embedding, and the multiplication and inverse map are holomorphic maps.

We denote by $\belG_y$, resp. $\belG^y$, the fibers of the source map, resp. target map, i.e. $\belG_y = s^{-1}(y)$, and $\belG^y = t^{-1}(y)$. Introduce the notation $\belG_y^{y'} = \belG_y \cap \belG^{y'}$ for the intersections, and when $y=y'$ the space $\belG_y^y$ has a group structure, that we call {\em vertex group} at $y$. We will also use similar notations with $Z$ any subset of $Y$ in place of $y$.

There is a standard construction that associates to a holomorphic Lie groupoid $\mathcal{G}$ a holomorphic Lie algebroid $\belA(\belG)$. As a vector bundle, $\belA(\belG) = e^* T^s_\belG$, where $T^s_\belG$ is the sub-vector bundle of $T_\belG$ whose sections are the vector fields that are tangent to the fibers of $s$, i.e. sections of the kernel of $T_s: T_\belG \to T_Y$. The anchor on $\belA(\belG)$ is induced by the composition $t \circ e: Y \to Y$, and the bracket is induced by the identification of sections of $\belA(\belG)$ with right-invariant vector fields on $\belG$. 

One says that a holomorphic Lie algebroid $\caL$ is {\em integrable} when there exist a holomorphic Lie groupoid $\belG$ such that $\caL = \belA(\belG)$. Unlike the case of Lie algebras, Lie algebroids are not always integrable (see \cite{mackenzie}, \cite{cf-integrability}). However, one has the following that grants the uniqueness of a "special" Lie groupoid integrating a given integrable Lie algebroid:

\begin{theorem} 
Let $\caL$ be an integrable holomorphic Lie algebroid.

Then there exist a unique holomorphic Lie groupoid $\tilde{\belG}$ such that $\belA(\tilde{\belG}) = \caL$ and the fibers $\tilde{\belG}_y$ are connected and simply connected for any $y$ (in the following we will say that the Lie groupoid with these properties is {\em $\textsc{s}$-connected and $\textsc{s}$-simply connected}).
\end{theorem}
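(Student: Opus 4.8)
The plan is to separate existence from uniqueness, with both parts resting on the groupoid version of Lie's second theorem, which I would establish first as the engine of the argument: \emph{if $\belG$ and $\mathcal{H}$ are holomorphic Lie groupoids integrating $\caL$ and $\belK$ respectively, and $\belG$ is $\textsc{s}$-connected and $\textsc{s}$-simply connected, then every morphism of Lie algebroids $\phi:\caL\to\belK$ integrates to a unique holomorphic groupoid morphism $\Phi:\belG\to\mathcal{H}$ with $\belA(\Phi)=\phi$.} This lifting statement is precisely the groupoid incarnation of the second theorem of Lie invoked throughout the introduction, and it immediately governs both halves of the theorem.

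For uniqueness, suppose $\belG_1$ and $\belG_2$ are two $\textsc{s}$-connected, $\textsc{s}$-simply connected holomorphic Lie groupoids with $\belA(\belG_1)=\belA(\belG_2)=\caL$. Applying the lifting statement to the identity morphism $\id:\caL\to\caL$ in both directions produces holomorphic groupoid morphisms $\Phi_{12}:\belG_1\to\belG_2$ and $\Phi_{21}:\belG_2\to\belG_1$, each inducing $\id$ on Lie algebroids. The compositions $\Phi_{21}\circ\Phi_{12}$ and $\Phi_{12}\circ\Phi_{21}$ then integrate $\id$, as do the respective identity morphisms; since the domains are $\textsc{s}$-simply connected, the uniqueness clause of the lifting statement forces these compositions to equal the identities. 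Hence $\Phi_{12}$ and $\Phi_{21}$ are mutually inverse isomorphisms, and $\tilde\belG$ is unique up to canonical isomorphism.

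For existence I would start from any integrating groupoid $\belG$, which exists by the integrability hypothesis, and first pass to the $\textsc{s}$-connected piece: the union $\belG^\circ$ of the connected components of the source fibers $\belG_y$ containing the unit $e(y)$ is an open subgroupoid, and since $\belA(\belG)$ depends only on an infinitesimal neighborhood of the unit section, $\belA(\belG^\circ)\iso\caL$. To render the fibers simply connected I would then form the monodromy groupoid $\tilde\belG$, whose arrows are homotopy classes, relative to endpoints and taken within a single source fiber, of paths in $\belG^\circ$ issuing from a unit. Its source fiber over $y$ is the universal cover of $\belG^\circ_y$, hence connected and simply connected, and the covering projection $\tilde\belG\to\belG^\circ$ is a local biholomorphism along the units, so it induces an isomorphism $\belA(\tilde\belG)\iso\caL$.

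The main obstacle is twofold, and both parts are exactly what the holomorphic treatment of \cite{xu_integration} is designed to supply. First, one must verify that the monodromy groupoid is a genuine holomorphic Lie groupoid rather than a merely topological one: the universal covers of the source fibers have to assemble into a complex manifold with holomorphic structure maps, which requires the family $\{\belG^\circ_y\}$ to vary holomorphically in a sufficiently controlled way. Second, the lifting statement itself must be proved holomorphically. The cleanest route is to realize a candidate morphism $\Phi$ through its graph: the graph of $\phi$ is a subalgebroid of $\caL\times\belK$, it integrates to the leaf through the units of the associated right-invariant involutive distribution on $\belG\times\mathcal{H}$, and $\textsc{s}$-simple connectedness of $\belG$ forces this integral subgroupoid to project isomorphically onto $\belG$, hence to be the graph of a holomorphic groupoid morphism. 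Establishing holomorphicity and well-definedness in these two constructions is the technical heart of the argument, whereas the compatibility of $\Phi$ with units, anchors, and brackets is routine bookkeeping.
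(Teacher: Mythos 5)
The paper does not prove this theorem: it is quoted as a known result, with the subsection's opening remark deferring the holomorphic case to \cite{xu_integration}. Your sketch is the standard argument from that literature, and it is correct in outline. The uniqueness step via Lie II applied to $\id_{\caL}$ in both directions is exactly right, and there is no circularity in your ordering provided you state the lifting lemma for a \emph{given} $\textsc{s}$-connected, $\textsc{s}$-simply connected integration of the source (as you do) and establish existence before invoking it. The existence step --- pass to the $\textsc{s}$-identity component $\belG^\circ$, which is an open subgroupoid with $\belA(\belG^\circ)\iso\caL$ because the Lie algebroid is computed along the unit section, then replace $\belG^\circ$ by its monodromy groupoid --- is the canonical construction. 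The two points you flag as the technical heart are indeed where all the work lies in the holomorphic category: endowing the family of universal covers of the source fibers with a complex manifold structure making the structure maps holomorphic (this uses local triviality of $s$ as a holomorphic submersion to spread a chart near the units over each source fiber by right translation), and integrating a morphism of algebroids by taking the leaf through the units of the right-invariant holomorphic involutive distribution on $\belG\times\mathcal{H}$ determined by the graph of $\phi$, with $\textsc{s}$-simple connectedness guaranteeing that this leaf is the graph of a single-valued morphism. Since you only assert rather than carry out these two verifications, your text is a correct proof outline rather than a complete proof, but it identifies the right decomposition and the right difficulties, and it is consistent with the source the paper relies on.
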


A {\em morphism} between two holomorphic Lie groupoids $\belG$ on $Y$ and $\belG'$ on $Y'$ is a pair $(\Phi,f)$ of morphism of complex manifolds $\Phi: \belG \to \belG'$ and $f: Y \to Y'$ that is compatible with all the structural morphisms of $\belG$ and $\belG'$. We will mainly deal with morphisms of Lie groupoids over the same base $Y$ and such that $f = \id_Y$. When we write $F: \belG \to \belG'$ for a Lie groupoid morphism, it should be understood that it is a morphism of Lie groupoids over the identity.

There is a standard construction that associates to a morphism of Lie groupoids $(\Phi,f):\belG \to \belG'$ a morphism of the associated Lie algebroids $(\belA(\Phi),f): \belA(\belG) \to \belA(\belG')$. Similarly to the case of Lie algebras, the following holds (cf. \cite{LieII}):
\begin{theorem}[Second theorem of Lie] \label{thm:LieII}
Let $\phi: \caL \to \caL'$ be a morphism of holomorphic Lie algeboids over $f: Y \to Y'$. Assume that both $\caL$ and $\caL'$ are integrable, let $\tilde{\belG}$ be the unique $\textsc{s}$-connected and $\textsc{s}$-simply connected Lie algebroid integrating $\caL$, and $\belG'$ any Lie groupoid integrating $\caL'$.

Then there exist a unique morphism of holomorphic Lie groupoids $(\Phi,f): \tilde{\belG} \to \belG'$ such that $\belA(\Phi) = \phi$.
\end{theorem}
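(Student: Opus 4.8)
The plan is to reduce the construction of $\Phi$ to the integration of a sub-Lie algebroid, following the classical graph technique. First I would form the direct product Lie groupoid $\tilde{\belG}\times\belG'$ over $Y\times Y'$, whose associated Lie algebroid is the product $\caL\times\caL'$. The morphism $\phi$ over $f$ produces its graph $G_\phi$, the subbundle of $(\caL\times\caL')|_{\Gamma_f}$ over the graph $\Gamma_f=\{(y,f(y)):y\in Y\}\cong Y$ whose fibre over $(y,f(y))$ is $\{(v,\phi(v)):v\in\caL(y)\}$. The content of the two conditions in the definition of a morphism of Lie algebroids is exactly that $G_\phi$ is closed under the product bracket and that its anchor lands in $T\Gamma_f$; hence $G_\phi$ is a sub-Lie algebroid of $\caL\times\caL'$, the first projection restricts to an isomorphism $G_\phi\xrightarrow{\sim}\caL$, $(v,\phi(v))\mapsto v$, and the second projection gives $G_\phi\to\caL'$, $(v,\phi(v))\mapsto\phi(v)$, which equals $\phi$ under this identification.

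Next I would integrate $G_\phi$ inside $\tilde{\belG}\times\belG'$ by Frobenius. Since $\belA(\tilde{\belG}\times\belG')=e^*T^s$, every section of $\caL\times\caL'$ gives a right-invariant vector field tangent to the source fibres, and the subbundle $G_\phi$ spans a right-invariant distribution $\belD\subseteq T^s(\tilde{\belG}\times\belG')$ defined along the arrows whose target lies in $\Gamma_f$; involutivity of $\belD$ follows from $G_\phi$ being closed under bracket. Because the anchor of $G_\phi$ maps into $T\Gamma_f$, the leaf of $\belD$ through the identity arrow at $(y,f(y))$ keeps its source constant equal to $(y,f(y))$ and its target inside $\Gamma_f$, so the union $L$ of the leaves through the identities over $\Gamma_f$ consists of arrows with both endpoints in $\Gamma_f$. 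Standard right-invariance arguments then show that $L$ is an immersed subgroupoid over $\Gamma_f\cong Y$ with $\belA(L)=G_\phi$ and with connected source fibres, each source fibre $L_c$ being a single leaf.

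Then I would prove that the first projection $\text{pr}_1:L\to\tilde{\belG}$ is an isomorphism. On algebroids it is the isomorphism $G_\phi\xrightarrow{\sim}\caL$, so $\text{pr}_1$ is a local biholomorphism and a groupoid morphism; restricted to each source fibre it is a covering onto the connected fibre $\tilde{\belG}_y$, surjective because $\text{pr}_1$ intertwines the right-invariant fields of $G_\phi$ with those of $\caL$, whose flows from the identity generate the whole connected fibre. As $\tilde{\belG}_y$ is simply connected and $L_c$ is connected, this covering is a biholomorphism, so $\text{pr}_1$ is an isomorphism of Lie groupoids. I then set $\Phi=\text{pr}_2\circ(\text{pr}_1)^{-1}:\tilde{\belG}\to\belG'$, which is a morphism over $f$, and computing $\belA(\Phi)$ along $v\mapsto(v,\phi(v))\mapsto\phi(v)$ gives $\belA(\Phi)=\phi$. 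For uniqueness, any morphism integrating $\phi$ has graph an $\textsc{s}$-connected integral subgroupoid of $\belD$ passing through the identities, hence equal to $L$ by uniqueness of leaves, so that morphism coincides with $\Phi$.

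The main obstacle I expect is the middle step: justifying that $G_\phi$, supported only over the submanifold $\Gamma_f$ rather than over all of $Y\times Y'$, genuinely integrates to an immersed subgroupoid $L$ with connected source fibres, and that $\text{pr}_1|_{L_c}$ is a covering of $\tilde{\belG}_y$ rather than merely a local biholomorphism with open image. This is where the holomorphic Frobenius theorem, the $\textsc{s}$-connectedness of $\tilde{\belG}$, and the structure of right-invariant foliations must be combined carefully; the $\textsc{s}$-simple connectedness of $\tilde{\belG}$ is used precisely once, to upgrade the covering $L_c\to\tilde{\belG}_y$ to a biholomorphism and thereby obtain both existence and uniqueness.
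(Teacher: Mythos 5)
The paper does not prove this theorem: it is quoted as a standard result with a pointer to \cite{LieII} (Moerdijk--Mr\v{c}un), so there is no in-paper argument to compare yours against. Your proposal is the classical graph-of-a-morphism proof (as in Mackenzie--Xu and in the integrability literature), and its outline is sound: the equivalence between the somewhat awkward bracket-compatibility condition in the definition of a morphism over $f$ and the statement that $G_\phi$ is a sub-Lie algebroid of $\caL\times\caL'$ supported over $\Gamma_f$ is exactly the Higgins--Mackenzie graph characterization; the right-invariant distribution you describe is well defined on the submanifold of arrows with the appropriate endpoint in $\Gamma_f$ because the value of a right-invariant vector field at an arrow depends only on the value of the corresponding section at one endpoint; and the uniqueness argument via uniqueness of leaves is the standard one. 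The one step where your write-up is genuinely thinner than what is needed is the claim that $\mathrm{pr}_1|_{L_c}:L_c\to\tilde{\belG}_y$ is a covering: a surjective local biholomorphism between connected manifolds need not be a covering, so simple connectedness alone does not yet let you invert it. The usual fix is to establish a path-lifting property directly, by lifting the integral curves of the right-invariant vector fields of $\caL$ (which exist by construction of $L_c$ as an accessible set and are complete along any given path in $\tilde{\belG}_y$); you flag this as the main obstacle, which is the right instinct, but the resolution should be via flow-lifting rather than via an appeal to the covering-space dichotomy. In the holomorphic setting one also needs the holomorphic Frobenius theorem and the fact, from \cite{xu_integration}, that the underlying real objects carry compatible holomorphic structures; with those ingredients your argument goes through.
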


\begin{example}[Trivial groupoids]
Let $Y$ be a complex manifold, and $G$ a complex Lie group. Then the trvial Lie groupoid over $Y$ with vertex group $G$ is the groupoid $Y \times G \times Y$, with source (resp. target) map equal to the projection to the first (resp. third) factor, multiplication $(x,g,y) \cdot (y,g',z) = (x,g\cdot g',z)$, identity $e(x) = (x,1,x)$ and inverse $(x,g,y)^{-1} = (y,g^{-1},x)$.

\end{example}

\begin{example}[Pair groupoid]
Let $Y$ be a complex manifold. The {\em pair groupoid} is $X \times X$, the trivial groupoid with group $G = \{1\}$. The Lie algebroid associated to $Y \times Y$ is the tangent Lie algebroid $T_Y$. One has $s^{-1}(y) = \{y\} \times Y$, so that in general $Y\times Y$ is not $\textsc{s}$-simply connected.
\end{example}

\begin{example}[Fundamental groupoid]
Let $Y$ be a connected complex manifold. Define $\Pi_1(Y)$ to be the set of homotopy classes of paths $\gamma:[0,1] \to Y$. By setting $s([\gamma]) = \gamma(0)$, $t([\gamma]) = \gamma(1)$, and product equal to the concatenation of paths, one defines a groupoid structure on $\Pi_1(Y)$, called the {\em fundamental groupoid} of $Y$. The identity $e(y)$ is the constant path at $y$, and the inverse is given by $\gamma^{-1}(t) = \gamma(1-t)$. 

The Lie algebroid associated to $\Pi_1(Y)$ is again the tangent Lie algebroid $T_Y$. The fiber $s^{-1}(y)$ coincide with the space of homotopy classes of loops starting at $y$, that is isomorphic to $\tilde{Y}$, the universal cover of $Y$. So each fiber $s^{-1}(y)$ is simply connected, and $\Pi_1(Y)$ is the $\textsc{s}$-connected and $\textsc{s}$-simply connected Lie groupoid integrating $T_Y$.

Remark that the pair $(s,t): \Pi_1(Y) \to Y\times Y$ (also called the {\em anchor} of the groupoid) defines a Lie groupoids morphism over the identity from the fundamental groupoid to the pair groupoid. This morphism is surjective, and we have the exact sequence of Lie groupoids
$$
1 \to K \to \Pi_1(Y) \to Y\times Y \to 1\ ,
$$
where $K$, is as a totally intransitive Lie groupoid (i.e. a groupoid where $s=t$) with $K_y^y = \pi_1(Y,y)$.
\end{example}

\begin{example}[Gauge groupoid]
Let $p: P \to Y$ be a holomorphic principal $G$-bundle, with $G$ a complex Lie group. 
The {\em gauge groupoid} of $P$ is the holomorphic Lie groupoid $\mathfrak{G}(P)$ over $Y$ defined as follows:
\begin{itemize}
\item the space $\mathfrak{G}(P)$ is $\frac{P\times P}{G}$, with $G$ acting diagonally on the product;
\item the source map is $s([a,b]) = p(a)$, while the target $t([a,b])=p(b)$;
\item the product is $[a,b] \cdot [b',c] = [a,cg]$, where $g\in G$ is such that $b' = bg^{-1}$;
\item the identity is $e(y) = [a,a]$ for any $a\in P(y)$;
\item the inverse is $[a,b]^{-1} = [b,a]$. 
\end{itemize}

The Lie algebroid associated to $\mathfrak{G}(P)$ is $\atiyah(P)$, so Atiyah algebroids of principal and vector bundles are integrable Lie algebroids. The fibers $\mathfrak{G}(P)_y$ are isomorphic to $P$, so in general $\mathfrak{G}(P)$ is not $\textsc{s}$-simply connected.

Remark that the anchor $[a,b] \mapsto (p(a),p(b))$ yields the exact sequence of Lie groupoids
$$
1 \to P\times_G G \to \mathfrak{G}(P) \to Y \times Y \to 1\ ,
$$
where the leftmost term is the fiber bundle associated to the adjoint representation of $G$ on itself, seen as a totally intransitive Lie groupoid with $s=t$ equal to the bundle projection map.

\end{example}

\begin{example}[Gauge-Path groupoid] \label{ex:GP-groupoid}
Let $p:P \to Y$ be as before; define the following Lie groupoid $\mathfrak{C}(P)$ over $Y$: 
\begin{itemize}
\item arrows in $\mathfrak{C}(P)$ are elements of the quotient $\Pi_1(P)/G$, where the action of $G$ on paths is given by $(\gamma \cdot g)(t) = \gamma(t)\cdot g$;
\item source $s([\gamma]) = p(\gamma(0))$, and target $t([\gamma]) = p(\gamma(1))$;
\item the product $\gamma\cdot \gamma'$ is given by the concatenation of $\gamma$ with $\gamma'\cdot g$, where $g\in G$ is such that $\gamma(1) = \gamma'(0) \cdot g^{-1}$;
\item the identity over $y$ is the class of the constant path at $a$ for any $a$ with $p(a)=y$; the inverse is defined as the inverse in the path groupoid.
\end{itemize}

Since the Lie algebroid associated to $\Pi_1(P)$ is $T_P$, the Lie algebroid associated to $\mathfrak{C}(P)$ is $T_P/G = \atiyah(P)$. The fibers $\mathfrak{C}(P)_y$ are isomorphic to $\Pi_1(P)_a$ for any $a \in P(y)$. So $\mathfrak{C}(P)_y$ is isomorphic to $\tilde{P}$, the universal cover of $P$, and as such it is simply connected. Thus $\mathfrak{C}(P)$ is the unique $\textsc{s}$-connected $\textsc{s}$-simply connected Lie algebroid integrating $\atiyah(P)$.

The groupoid $\mathfrak{C}(P)$ has two natural surjective groupoid morphisms over the identity: one is $p:\mathfrak{C}(P) \to \Pi_1(Y)$ induced by the bundle map $p$, and the other $\epsilon : \mathfrak{C}(P) \to \mathfrak{G}(P)$ given by taking the endpoints of a path. The kernel of $p$ is isomorphic to the fiber bundle $P \times_G \tilde{G}$, where one identifies $\tilde{G}$ with the space of homotopy classes of paths in $G$ starting at the identity, and the $G$-action on $\tilde{G}$ is $(g\cdot \gamma)(t) = g \cdot \gamma(t) \cdot g^{-1}$.

These give the commutative diagram of Lie groupoids
\begin{equation} \label{diagram0}
\xymatrix{ 
P \times_G \pi_1(G,1) \ar[d] \ar[r] & P \times_G \tilde{G} \ar[d] \ar[r] & P \times_G G \ar[d] \\
W \ar[d] \ar[r] & \mathfrak{C}(P) \ar[d]^p \ar[r]^\epsilon & \mathfrak{G}(P) \ar[d] \\
K \ar[r] & \Pi_1(Y) \ar[r] & Y\times Y
}
\end{equation}
Remark that both $W$ and $K$ are totally intransitive Lie groupoids over $Y$, with $K_y^y = \pi_1(Y,y)$, and $W^y_y = \pi_1(P, a)$ for any $a \in P(y)$.

\end{example}

\begin{example}[Frame groupoid]
Let $E$ be a vector bundle on $Y$. Define $\operatorname{Iso}(E)$, the {\em frame groupoid} of $E$, to be the Lie groupoid whose elements are triples $(y_1,y_2,\phi)$, with $y_1,y_2 \in Y$ and $\phi: E(y_1) \to E(y_2)$ an isomorphism. The Lie groupoid structure is given by $s(y_1,y_2,\phi)= y_1$ and $t(y_1,y_2,\phi) = y_2$, the product $(y_1,y_2,\phi)(y_2,y_3,\phi') = (y_1,y_3,\phi' \circ \phi)$, the identity $e(y) = (y,y, \id_{E(y)})$ and the inverse $(y_1,y_2,\phi)^{-1}=(y_2,y_1,\phi^{-1})$.

Remark that if $P_E$ is the principal $\GL(r,\mathbb C)$-bundle of frames of $E$, one has a natural isomorphism of Lie groupoids between $\operatorname{Iso}(E) $ and the gauge groupoid $\frac{P_E \times P_E}{\GL(r,\mathbb{C})}$.
\end{example}

In the previous section, we gave a definition of representation of a Lie algebroid $\caL$ on a vector bundle $E$ as a morphism of Lie algebroids $\caL \to \atiyah(E)$. Similarly, one has the following:

\begin{definition}
Let $G$ be a Lie groupoid and $E$ a vector bundle on $Y$. A  {\em $G$-module structure} on $E$, or a {\em representation} of $G$ on $E$ is a Lie groupoid morphism $\Phi : G \to \operatorname{Iso}(E)$ over the identity. 
\end{definition}

Then, one obtains the following corollary of the second theorem of Lie:
\begin{corollary} \label{cor:representations}
Let $\caL$ be an integrable holomorphic Lie algebroid. 

There is an equivalence between the category of $\caL$-modules and the category of $\tilde{\belG}$-modules, where $\tilde{\belG}$ is the unique $\textsc{s}$-connected and $\textsc{s}$-simply connected Lie groupoid integrating $\caL$.
\end{corollary}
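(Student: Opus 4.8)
The plan is to read off both functors from the second theorem of Lie (Theorem~\ref{thm:LieII}), using the fact established for the frame and gauge groupoids that $\operatorname{Iso}(E)$ integrates $\atiyah(E)$, so that $\belA(\operatorname{Iso}(E)) = \atiyah(E)$. First I would define a functor $F$ from $\caL$-modules to $\tilde{\belG}$-modules on objects. A $\caL$-module is a flat $\caL$-connection, i.e.\ a morphism of Lie algebroids $\nabla \colon \caL \to \atiyah(E)$ over $\id_Y$. Since $\tilde{\belG}$ is the $\textsc{s}$-connected $\textsc{s}$-simply connected integration of $\caL$ and $\operatorname{Iso}(E)$ is a (not necessarily $\textsc{s}$-simply connected) integration of $\atiyah(E)$, Theorem~\ref{thm:LieII} furnishes a unique morphism of Lie groupoids $\Phi \colon \tilde{\belG} \to \operatorname{Iso}(E)$ over $\id_Y$ with $\belA(\Phi) = \nabla$; this $\Phi$ is exactly a $\tilde{\belG}$-module structure on $E$, and I set $F(E,\nabla) = (E,\Phi)$. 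In the opposite direction, the Lie functor $\belA$ sends a $\tilde{\belG}$-module $\Phi \colon \tilde{\belG} \to \operatorname{Iso}(E)$ to the morphism $\belA(\Phi)\colon \caL \to \atiyah(E)$, i.e.\ to a flat $\caL$-connection; call this functor $G$. On objects the two are mutually inverse: $G(F(E,\nabla)) = (E,\belA(\Phi)) = (E,\nabla)$ by construction, while $F(G(E,\Phi))$ is the unique groupoid morphism integrating $\belA(\Phi)$, which must be $\Phi$ itself by the uniqueness clause of Theorem~\ref{thm:LieII}.

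Next I would treat morphisms. A bundle map $\psi\colon E \to E'$ over $Y$ is a morphism of $\caL$-modules precisely when it is flat for the induced $\caL$-connection $\nabla^{\lHom}$ on $\lHom(E,E')$, and a morphism of $\tilde{\belG}$-modules precisely when it is invariant for the induced $\tilde{\belG}$-action $\Phi^{\lHom}$ on $\lHom(E,E')$. The constructions of $\nabla^{\lHom}$ from $(\nabla_E,\nabla_{E'})$ and of $\Phi^{\lHom}$ from $(\Phi_E,\Phi_{E'})$ are compatible with $F$: both are obtained from the representations on $E$ and $E'$ by the same tensor--dual formulas, and since $F$ is induced by the Lie-functor/integration dictionary, which commutes with these standard operations, one has $F(\lHom(E,E'),\nabla^{\lHom}) = (\lHom(E,E'),\Phi^{\lHom})$, an equality that can itself be checked by uniqueness in Theorem~\ref{thm:LieII}. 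Thus the comparison of morphisms reduces to a single statement about one module, namely that flat sections and invariant sections coincide.

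Finally I would prove this last statement, which I expect to be the main obstacle and the only place where $\textsc{s}$-connectedness is genuinely used. Let $(V,\nabla)$ be a $\caL$-module and $(V,\Phi)$ the corresponding $\tilde{\belG}$-module, and let $\sigma$ be a section of $V$. Fixing $y \in Y$, the assignment $g \mapsto \Phi(g)^{-1}\bigl(\sigma(t(g))\bigr)$ defines a $V(y)$-valued function on the source fibre $\tilde{\belG}_y$, whose value at the identity $e(y)$ is $\sigma(y)$; invariance of $\sigma$ at $y$ is precisely the statement that this function is constant. Because $\belA(\tilde{\belG}) = e^*T^s_{\tilde{\belG}}$ and $\belA(\Phi) = \nabla$, the derivative of this function along the fibre $\tilde{\belG}_y$, whose tangent directions are, along $e(Y)$, the algebroid directions, is governed by $\nabla\sigma$; hence it has vanishing derivative if and only if $\nabla\sigma = 0$. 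Since $\tilde{\belG}_y$ is connected, a function with vanishing derivative is constant, so $\sigma$ is invariant if and only if it is flat.

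Feeding this back through the Hom-bundle reduction shows that $F$ and $G$ are mutually inverse bijections on morphism sets as well as on objects, so they are mutually inverse equivalences of categories, which proves the corollary. The only genuinely delicate points are the compatibility of the tensor--dual operations with the integration dictionary, which is routine once phrased through the uniqueness in Theorem~\ref{thm:LieII}, and the passage from "derivative zero" to "constant" on the source fibres, where $\textsc{s}$-connectedness is indispensable.
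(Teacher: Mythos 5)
Your proposal is correct and follows exactly the route the paper intends: the paper states this corollary without proof as an immediate consequence of Theorem~\ref{thm:LieII}, applied to a flat $\caL$-connection viewed as an algebroid morphism $\nabla\colon\caL\to\atiyah(E)$ and to the groupoid $\operatorname{Iso}(E)$ integrating $\atiyah(E)$, with uniqueness giving the inverse on objects. The only difference is that you supply the morphism-level verification (flat sections of the Hom-bundle versus invariant sections, using $\textsc{s}$-connectedness) that the paper leaves implicit; this is standard and correctly carried out.
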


A Lie groupoid is said {\em transitive} when the anchor $(s,t)$ is surjective. It turns out (cf. \cite{mackenzie}, Chapter 1.5) that a transitive Lie groupoid is locally trivial, i.e. that one can always find local sections of $t_{|\belG_y} : \belG_y \to Y$. Given $\alpha: U \to \belG_y$ a section of $t_{|\belG_y}$ defined on some open set $U\subseteq Y$, one can trivialize $\belG$ over $U$: one can write $g = \alpha(s(g))^{-1} \gamma \alpha(t(g))$, where $\gamma = \alpha(s(g)) g \alpha(t(g))^{-1} \in \belG^y_y$, and define an isomorphism $\belG_U^U \stackrel{\tilde{\alpha}}{\to} U \times \belG^y_y \times U$ by $\tilde{\alpha}(g) = (s(g), \gamma , t(g))$. One can use the local triviality of a transitive Lie groupoid $\belG$ to build a representation of $\belG$ from a representation of its vertex groups. Namely one has:

\begin{theorem} \label{thm:morita}
Let $\belG$ be a transitive Lie groupoid over $Y$, and fix $y\in Y$.

Then there is an equivalence between 
\begin{itemize}
\item[-] equivalence classes of representations $\belG \to \operatorname{Iso}(E)$ into a rank $r$ vector bundle;
\item[-] conjugacy classes of representations of $\belG^y_y \to \GL(r,\mathbb C)$.
\end{itemize}
\end{theorem}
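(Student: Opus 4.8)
The plan is to realise the equivalence as a pair of mutually inverse constructions: a \emph{restriction} sending a groupoid representation to the induced representation of the vertex group on a fixed fibre, and an \emph{induction} building a $\belG$-module from a vertex representation by an associated-bundle construction. The key preliminary is a structural upgrade of the local triviality recalled just before the statement: using transitivity, I would show that the target map $t_{|\belG_y}: \belG_y \to Y$ is a holomorphic principal $\belG^y_y$-bundle. Indeed the vertex group $\belG^y_y$ acts on $\belG_y$ by groupoid multiplication, preserving the $t$-fibres $\belG^{y'}_y$, and on each fibre this action is free and transitive because $g' g^{-1}$ is the unique element carrying $g$ to $g'$; the required holomorphic local sections are exactly those furnished by the local triviality of a transitive Lie groupoid. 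Writing $P := \belG_y$ for this principal bundle, one may moreover identify $\belG$ with the gauge groupoid $\mathfrak{G}(P)$, which will make the remaining arguments transparent.

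For the restriction map $R$, given a representation $\Phi:\belG \to \operatorname{Iso}(E)$, I fix a linear frame $f:\C^r \xrightarrow{\sim} E(y)$ and restrict $\Phi$ to the vertex group, obtaining $\rho := \Phi_{|\belG^y_y}: \belG^y_y \to \Aut(E(y)) \cong \GL(r,\C)$. Changing the frame conjugates $\rho$, and an isomorphism of $\belG$-modules $E \cong E'$ restricts over $y$ to an intertwiner of the two vertex representations, so $R$ is well defined at the level of classes. For the induction map $I$, given $\rho: \belG^y_y \to \GL(r,\C)$ I form the associated holomorphic vector bundle $E_\rho := P \times_{\belG^y_y} \C^r$ and define a $\belG$-action by $\Phi_\rho(k)[g,v] = [g\cdot k, v]$, for $k$ an arrow with $s(k) = t(g)$. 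I would check that this is independent of the representative $(g,v)$ (using $[hg,\rho(h)v] = [g,v]$), that it gives linear isomorphisms between the prescribed fibres, and that $\Phi_\rho$ respects sources, targets, identities and composition in accordance with the convention $(\cdot)(\cdot) = (\cdot)\circ(\cdot)$ of $\operatorname{Iso}(E_\rho)$; conjugate $\rho$ then yield isomorphic $\belG$-modules, so $I$ descends to conjugacy classes.

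Finally I would verify that $R$ and $I$ are mutually inverse. For $R\circ I$, the fibre $E_\rho(y)$ is canonically identified with $\C^r$ through the identity arrow $e(y)$, and under this frame the vertex action of $\Phi_\rho$ recovers exactly $\rho$. For $I\circ R$, the assignment $[g,v]\mapsto \Phi(g)\bigl(f(v)\bigr)$, with $\Phi(g): E(y)\to E(t(g))$, defines a well-defined holomorphic isomorphism $E_\rho \xrightarrow{\sim} E$ intertwining the two $\belG$-actions. I expect the main obstacle to be the structural step in the holomorphic category: checking carefully that $t_{|\belG_y}$ is a principal $\belG^y_y$-bundle and that the associated-bundle construction produces a genuinely holomorphic bundle carrying a holomorphic groupoid action. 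Once this is established, all the remaining verifications reduce to routine bookkeeping of the groupoid laws against the composition convention of $\operatorname{Iso}(E)$.
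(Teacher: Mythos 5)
Your proposal is correct and follows essentially the same route as the paper: both identify $t_{|\belG_y}:\belG_y\to Y$ as a principal $\belG^y_y$-bundle (using the local triviality of transitive Lie groupoids) and induce a $\belG$-module from $\rho$ via the associated bundle $\belG_y\times_{\belG^y_y}\mathbb C^r$. Your direct formula $\Phi_\rho(k)[g,v]=[g\cdot k,v]$ is exactly what the paper's three-step definition of $\Phi_g$ (via the isomorphisms $\Theta_{g_1}$, the framing identification, and $\Theta_{g_2}^{-1}$) unwinds to, so the two constructions coincide.
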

\begin{proof}
%Remark that $\operatorname{Iso}(E)$ is a transitive Lie groupoid, and the vertex groups $\operatorname{Iso}(E)_y^y$ are isomorphic to $\GL(r,\mathbb C)$. For $G$ and $H$ complex Lie groups, the Lie algebroid morphisms over the identity between the trivial Lie algebroids $Z \times G \times Z$ and $Z \times H \times Z$ are of the form $\Phi(z_1,g,z_2) = (z_1, A(z_1)^{-1} \theta(g) A(z_2) , z_2)$ for $A: Z \to H$ a holomorphic map and $\theta:G \to H$ a complex Lie groups morphism. The choice of $A$ and $\theta$ is not unique, and it is determined by the choice of $z_0 \in Z$, by setting $\Phi(z_0 , 1 , z) = (z_0 , A(z) , z)$ and $\Phi(z_0,g,z_0) = (z_0, \theta(g) , z_0)$. Choosing another $\tilde{z}_0$ links the associated group morphisms via the formula $\tilde{\theta} = \operatorname{ad}_{A(z_0)^{-1}} \theta$.

The restriction to $y$ yields the functor
$$
\belG\text{-modules} \longrightarrow \belG^y_y\text{-modules}\ 
$$
that associates to a $\belG$-module $E$ the representation $\rho_{E(y)}$. One need to find an inverse to this, i.e. associate to any representation $\rho:\belG^y_y \to \GL(r,\mathbb C)$ a $\belG$-module $S_\rho$ such that  $S_{\rho_{E(y)}} = E$ and  $S_\rho$ restricted to $y$ yields back $\rho$.

Recall that $\belG_y^y$ naturally acts on the source fiber $\belG_y$, and, when $\belG$ is transitive, each $\belG_y$ is a principal $\belG_y^y$-bundle over $Y$, with projection $\belG_y \to Y$ equal to the target map $t_{|\belG_y}$, and equipped with a natural framing $(t_{|\belG_y})^{-1}(y) = \belG^y_y$. 

Then, consider $S_\rho = \belG_y \times_{\belG^y_y} \mathbb C^{r}$, the vector bundle associated to the principal bundle $\belG_y$ and the representation $\rho$. 

Any $g_1 \in \belG_y$ defines the representation $\operatorname{ad}_{g_1} \rho :\belG^{y_1}_{y_1} \to \GL(r, \mathbb C)$, with $y_1 = t(g_1)$, and $\operatorname{ad}_{g_1} \rho \ (c) = \rho(g_1 c g_1^{-1})$. There is a natural isomorphism $\Theta_{g_1} : S_\rho \to S_{\operatorname{ad}_{g _1}\rho}$ defined by $\Theta_{g_1}([a,v]) = [g_1^{-1}a , v]$. 

%If $g_1'$ is another element in $G_y^{y_1}$, one has $\operatorname{ad}_{g_1'} \rho = \operatorname{ad}_{\rho(\lambda)} \cdot \operatorname{ad}_{g_1}$, where $\lambda = g_1' g_1^{-1} \in G^y_y$, and $\Theta_{g_1'} = \Theta_{g_1} \circ \Theta_\lambda$. 

Now, let $g \in \belG$ be any arrow with $s(g) = y_1$ and $t(g) = y_2$, choose a $g_1 \in \belG_y^{y_1}$ and set $g_2 = g_1g \in \belG_y^{y_2}$. Define the isomorphism $\Phi_g : S_\rho(y_1) \to S_\rho(y_2)$ as the composition
$$
S_{\rho}(y_1) \stackrel{\Theta_{g_1}}{\longrightarrow} S_{\operatorname{ad}_{g_1} \rho} (y_1) \longrightarrow  S_{\operatorname{ad}_{g_2} \rho} (y_2) \stackrel{\Theta_{g_2}^{-1}}{\longrightarrow} S_{\rho}(y_2)
$$
where the central isomorphism is defined by using the framings of $\belG_{y_1}$ and $\belG_{y_2}$, explicitly $[e(y_1) , v]\mapsto [e(y_2) , v]$. One checks that $\Phi_g$ does not depend on the choice of the arrow $g_1$, so that $g \mapsto \Phi_g$ defines a structure of $\belG$-module on $S_\rho$. 

It is clear by its construction that the restriction of $S_\rho$ to $y$ coincides with $\rho$ and that $S_{\rho_{E(y)}} = E$.

\end{proof}

\subsection{Modules of the Atiyah algebroid of a line bundle} \label{sec:atiyah_lb}

Let us gather here some properties of $\atiyah(N)$-modules, for $N$ a line bundle over $Y$. Remark that in this case the sequence \eqref{ses:atiyah} becomes
\begin{equation} \label{ses:atiyah_lb}
0 \to \corO_Y \to \atiyah(N) \to T_Y \to 0\ .
\end{equation}
We shall denote by $R$ the first inclusion, and call its image in $\caL_N$ the {\em isotropy} of the Lie algebroid $\atiyah(N)$. Since $\corO_Y$ is abelian as a Lie algebroid, one has the following (see \cite{BB}, \cite{tesi}):

\begin{proposition} \label{prop:la_extension}
The Lie algebroid extensions of $T_Y$ by $\corO_Y$ are classified by the group $\mathbb {H}^2(Y; \tau^{\geq 1} \Omega_Y^\bullet)$, where $\tau^{\geq \cdot}$ denotes the truncation of the complex.

Under this correspondence, the Atiyah algebroid $\atiyah(N)$ corresponds to the Chern class of $N$. 

In particular, $\atiyah(N) = T_Y \oplus \corO_Y$ if and only if $c_1(N)=0$.
\end{proposition}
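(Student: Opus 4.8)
\emph{Strategy.} The plan is to prove the three assertions in order: reduce the extension problem to a \v{C}ech--hypercohomology computation, then evaluate the resulting class for $\atiyah(N)$, and finally read off the splitting criterion. Throughout I work with the extensions for which $\corO_Y$ sits as a \emph{central} ideal, which is the case relevant here since $\id_N$ is central in $\atiyah(N)$ for the commutator bracket.

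\emph{Classification.} Fix an extension $0 \to \corO_Y \xrightarrow{R} \caL \xrightarrow{\rho} T_Y \to 0$ of Lie algebroids with $\corO_Y$ the abelian isotropy. The Leibniz rule gives $[u,R(f)] = f[u,R(1)] + \rho(u)(f)R(1)$, so centrality of $R(1)$ is equivalent to the adjoint action of $\caL$ on $\corO_Y$ being the canonical one $u \mapsto \rho(u)(\cdot)$; I record this reduction first. Then over an open cover $\{U_i\}$ trivializing $\caL$ I choose $\corO_Y$-linear splittings $\nabla_i : T_Y|_{U_i} \to \caL|_{U_i}$ of the anchor (local connections). To each I associate its curvature $\omega_i(u,v) = [\nabla_i u,\nabla_i v] - \nabla_i[u,v]$, which lands in $\corO_Y$ (its anchor vanishes), is $\corO_Y$-bilinear and alternating by the Leibniz rule and centrality, hence defines $\omega_i \in \Omega^2_Y(U_i)$, and is $d$-closed by the Jacobi identity in $\caL$. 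On overlaps $\nabla_i - \nabla_j = \theta_{ij} \in \Omega^1_Y(U_{ij})$ satisfies $d\theta_{ij} = \omega_i - \omega_j$ and $\delta(\theta) = 0$. This says precisely that $(\{\omega_i\},\{\theta_{ij}\})$ is a total $2$-cocycle for the truncated complex $\tau^{\geq 1}\Omega^\bullet_Y = (\Omega^1_Y \xrightarrow{d} \Omega^2_Y \xrightarrow{d} \cdots)$; changing the $\nabla_i$, or passing to an isomorphic extension, alters it only by a coboundary, so the class $[\caL] \in \mathbb{H}^2(Y;\tau^{\geq 1}\Omega^\bullet_Y)$ is well defined. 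Conversely, from such a cocycle I reconstruct $\caL$ by gluing the local models $T_Y|_{U_i} \oplus \corO_Y|_{U_i}$, with bracket twisted by $\omega_i$ and the canonical action, along the shifts by $\theta_{ij}$; the cocycle relations are exactly what is needed for the glued bracket to be well defined and to satisfy Jacobi. These constructions are mutually inverse, giving the bijection (this is the content of \cite{BB}, \cite{tesi}, which I would only sketch).

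\emph{The class of $\atiyah(N)$ and the corollary.} For $\caL = \atiyah(N)$, choose a cover on which $N$ is trivialized by frames $e_i$ with $e_i = g_{ij}e_j$, and take $\nabla_i$ to be the trivial connection in the frame $e_i$. Then $\omega_i = 0$, while $\nabla_i - \nabla_j = d\log g_{ij} = dg_{ij}/g_{ij} \in \Omega^1_Y(U_{ij})$, so the representing cocycle is $(\{0\},\{d\log g_{ij}\})$. This is exactly the image of $[N] = \{g_{ij}\} \in H^1(Y;\corO_Y^*)$ under the map $H^1(Y;\corO_Y^*) \to \mathbb{H}^2(Y;\tau^{\geq 1}\Omega^\bullet_Y)$ induced by the morphism of complexes $d\log$ (well defined since $d(d\log g) = 0$), which is the first Chern class; hence $[\atiyah(N)] = c_1(N)$. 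The final assertion is then formal: under the bijection the zero class is the split extension $T_Y \oplus \corO_Y$, and a Lie-algebroid section of $\atiyah(N) \to T_Y$ is the same datum as a flat holomorphic connection on $N$, so $\atiyah(N) \cong T_Y \oplus \corO_Y$ if and only if $c_1(N) = [\atiyah(N)] = 0$.

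\emph{Main obstacle.} I expect the delicate part to be the rigorous bookkeeping in the classification step: verifying that the curvature is a closed holomorphic $2$-form, that the class is independent of both the trivializing cover and the chosen local connections, and that isomorphic extensions yield cohomologous cocycles and conversely. In particular the centrality hypothesis on $\corO_Y$ must be tracked carefully, since without it the adjoint action could be a nontrivial flat connection $d + \eta$ and the classifying group would be strictly larger. Matching the sign conventions between the curvature cocycle and the $d\log$ description of $c_1(N)$ is a further routine but error-prone point.
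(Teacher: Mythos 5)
Your argument is correct and is essentially the expected one: the paper gives no proof of this proposition, deferring to \cite{BB} and \cite{tesi}, and the \v{C}ech--hypercohomology computation you carry out (local $\corO_Y$-linear splittings, their curvature $2$-forms, the transition $1$-forms $\theta_{ij}$, and the $d\log g_{ij}$ cocycle identifying the class of $\atiyah(N)$ with $c_1(N)$) is exactly the standard argument those references contain. Your explicit reduction to extensions inducing the canonical action of $T_Y$ on $\corO_Y$ is a hypothesis the statement leaves implicit, and flagging it is a genuine improvement rather than a detour.
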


Let now $V$ be a vector bundle on $Y$, and $\delta$ a $\atiyah(N)$-connection on $V$; then 
$$
\delta: V \rightarrow V \otimes \Omega_{\atiyah(N)}^1\ 
$$
is a $\mathbb{C}_Y$-modules morphism satisfying the Leibniz rule $\delta(fs) = f\delta s + s \otimes d_{\atiyah(N)} f$. 
%Since $\atiyah(N)$ is a transitive Lie algebroid over $Y$, a necessary condition for existence of $\atiyah(N)$-connections is that $V$ must be $\corO_Y$-locally free.

\begin{remark}
Here and in the following, the $\atiyah(N)$-connections are holomorphic structures on $N$. If one wants to work with smooth structures, one should be careful to choose the appropriate real Lie algebroids associated to $\atiyah(N)$. For example, to reformulate the following propositions using objects in the real category one should use the Lie algebroid $\atiyah(N)_{\mathbb{R}} \bowtie T^{0,1}_X$ as in \cite{xu}.
\end{remark}

Dualizing the sequence \eqref{ses:atiyah_lb} one obtains
\begin{equation} \label{ses:atiyah_dual}
0\rightarrow \Omega_Y^1 \rightarrow \Omega_{\atiyah(N)}^1 \stackrel{R^*}{\rightarrow} \corO_Y \rightarrow 0\ ,
\end{equation}
and by composing $\delta$ with the quotient map $R^*$ one obtains a $\corO_Y$-linear endomorphism $\delta_R \in \End_{\corO_Y} V$. We call $\delta_R$ the \emph{isotropy action (or endomorphism)} of $\delta$ on $V$.

The next proposition gives some restriction on the topology of $V$ for the existence of $\atiyah(N)$-connections on it:

\begin{proposition} \label{prop:divisible}
Let $V$ be a vector bundle on $Y$.

If there exist $\atiyah(N)$-connections on $V$, then any characteristic class of $V$ is divisible by $c_1(N)$, the Chern class of $N$.
\end{proposition}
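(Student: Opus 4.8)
The plan is to read off the divisibility from the holomorphic Atiyah class of $V$, exploiting that the existence of an $\atiyah(N)$-connection is exactly the vanishing of the associated $\atiyah(N)$-Atiyah class. Recall (Atiyah's theorem and its Lie-algebroid version) that the obstruction to a holomorphic connection on $V$ is the Atiyah class $\operatorname{at}(V) \in H^1(Y; \cEnd_{\corO_Y} V \otimes \Omega^1_Y)$, that the obstruction to an $\caL$-connection is its image $\operatorname{at}_\caL(V) = \rho^*\operatorname{at}(V)$ under the dual anchor $\rho^*\colon \Omega^1_Y \to \Omega^1_\caL$, and that the Dolbeault Chern classes are recovered as $\ch_k(V) = \tfrac{(-1)^k}{k!}\,\trace\!\big(\operatorname{at}(V)^{k}\big) \in H^k(Y; \Omega^k_Y)$, the $k$-th cup power being followed by wedging the form factors and tracing the $\cEnd$ factors. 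Every characteristic class of $V$ is a polynomial with vanishing constant term in the $\ch_k$ ($k \geq 1$), so it suffices to prove that each $\ch_k(V)$ is divisible by $c_1(N)$ in the ring $\bigoplus_{p} H^p(Y;\Omega^p_Y)$.

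First I would show that the hypothesis forces $\operatorname{at}(V)$ itself to be divisible by $c_1(N)$. Tensoring the dual sequence \eqref{ses:atiyah_dual} by the locally free sheaf $\cEnd_{\corO_Y} V$ gives $0 \to \cEnd_{\corO_Y} V \otimes \Omega^1_Y \to \cEnd_{\corO_Y} V \otimes \Omega^1_{\atiyah(N)} \stackrel{R^*}{\to} \cEnd_{\corO_Y} V \to 0$, whose long exact cohomology sequence contains
\[
H^0(Y; \cEnd_{\corO_Y} V) \xrightarrow{\ \partial\ } H^1(Y; \cEnd_{\corO_Y} V \otimes \Omega^1_Y) \xrightarrow{\ \rho^*\ } H^1(Y; \cEnd_{\corO_Y} V \otimes \Omega^1_{\atiyah(N)}).
\]
Here the connecting map $\partial$ is cup product with the extension class of \eqref{ses:atiyah_dual} in $\Ext^1(\corO_Y,\Omega^1_Y) \cong H^1(Y;\Omega^1_Y)$, which by Proposition \ref{prop:la_extension} (and duality) is $\pm c_1(N)$; explicitly $\partial(\phi) = \phi\cdot c_1(N)$ through the $\corO_Y$-module structure. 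Since $V$ carries an $\atiyah(N)$-connection we have $\rho^*\operatorname{at}(V) = \operatorname{at}_{\atiyah(N)}(V) = 0$, so exactness produces a global endomorphism $\phi \in H^0(Y;\cEnd_{\corO_Y} V) = \End_{\corO_Y} V$ with $\operatorname{at}(V) = \phi\cdot c_1(N)$. One expects $\phi$ to be, up to sign, the isotropy endomorphism $\delta_R$ of the connection, but only its existence is needed.

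Finally I would substitute this factorization into the trace formula. Since $c_1(N)$ has total degree two it is central and sign-neutral in the Hodge cohomology ring, so the $k$-th cup power splits, after multiplying the endomorphism factors and wedging the form factors, as $\operatorname{at}(V)^{k} = \phi^{k}\cdot c_1(N)^{k}$; tracing gives
\[
\ch_k(V) = \tfrac{(-1)^k}{k!}\,\trace(\phi^{k})\cdot c_1(N)^{k}.
\]
For $k \geq 1$ this is $c_1(N)$ times a class, hence divisible by $c_1(N)$; as every characteristic class is a polynomial without constant term in the $\ch_k$ ($k \geq 1$), each of its monomials carries at least one factor divisible by $c_1(N)$, and the claim follows.

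The main obstacle is making the holomorphic Chern--Weil/Atiyah-class dictionary precise and compatible with the bookkeeping: identifying the Dolbeault classes $\trace(\operatorname{at}(V)^k)$ with the usual characteristic classes and matching them with the de Rham or topological $c_1(N)$ of the statement, checking that the connecting map of \eqref{ses:atiyah_dual}$\otimes\cEnd_{\corO_Y} V$ is indeed cup product with the class of the extension and that this class is $c_1(N)$ via Proposition \ref{prop:la_extension} (the sign being irrelevant for divisibility), and verifying the multiplicativity $\operatorname{at}(V)^k = \phi^k\cdot c_1(N)^k$ at the level of cup products of sheaf-cohomology classes. Everything else is a routine diagram chase.
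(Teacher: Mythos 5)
Your argument is correct in substance, but it follows a genuinely different route from the paper's. The paper deduces the proposition from two ingredients: a general theorem quoted from \cite{tesi}, asserting that a $\caL$-structure on $V$ kills the image of the characteristic ring under $\psi\colon H^\bullet(Y;\bC)\to H^\bullet(\caL;\bC)$, together with the explicit computation of $H^\bullet(\atiyah(N);\bC)$ from the exact sequence of complexes $0\to\Omega^\bullet_Y\to\Omega^\bullet_{\atiyah(N)}\to\Omega^{\bullet-1}_Y\to 0$, whose connecting morphism is cup product with $c_1(N)$; divisibility then follows because the kernel of $\psi$ in each degree is exactly the image of $\,\cdot\cup c_1(N)$. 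You instead work one degree down, with the Atiyah class itself: the sequence \eqref{ses:atiyah_dual} twisted by $\cEnd_{\corO_Y}V$ produces the factorization $\operatorname{at}(V)=\phi\cup c_1(N)$, and the trace formula then gives $\ch_k(V)=\tfrac{(-1)^k}{k!}\trace(\phi^k)\,c_1(N)^k$. Both proofs ultimately rest on the same fact --- the boundary map attached to the (dual) Atiyah sequence is cup product with $c_1(N)$ --- but yours is more explicit and yields more: divisibility of $\ch_k(V)$ by $c_1(N)^k$ rather than merely by $c_1(N)$, an identification of the cofactor with traces of powers of (essentially) the isotropy endomorphism, and an argument that genuinely uses only a connection, whereas the intermediate proposition the paper invokes is stated for flat connections (modules). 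What the paper's route buys is that the comparison between the Hodge-type classes $\trace(\operatorname{at}(V)^k)\in H^k(Y;\Omega^k_Y)$ and the characteristic classes in $H^\bullet(Y;\bC)$ is absorbed into the cited general theorem; in your version this comparison --- your flagged ``main obstacle'' --- must still be carried out. It is standard (lift the computation to $\mathbb{H}^{2k}(Y;\Omega^{\geq k}_Y)$, which maps compatibly both to $H^k(Y;\Omega^k_Y)$ and to $H^{2k}(Y;\bC)$, and note that the factorization through $\phi$ and the extension class already lives there by Proposition \ref{prop:la_extension}), but it is the one step your sketch does not actually supply.
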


This fact follows from a generalization of Atiyah's theory on the existence of holomorphic connections to the Lie algebroid setting: it is possible to show the following (see \cite{tesi}):
\begin{proposition}
Let $V$ be a vector bundle on $Y$. Denote by $C(V)\subseteq H^\bullet (Y; \bC)$ the characteristic ring of $V$, i.e. the subring generated by the characteristic classes of $V$.
Let $\caL$ be a holomorphic Lie algebroid, and denote by $\psi$ the natural morphism $H^\bullet(Y;\bC)\rightarrow H^\bullet (\caL;\bC)$ and $C^\caL(V) = \psi(C(V)).$

If $V$ admits a $\caL$-module structure then $C^\caL(V) = 0$.
\end{proposition}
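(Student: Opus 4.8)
The plan is to deduce the statement from the Lie-algebroid generalization of Atiyah's obstruction theory, realizing the characteristic classes as universal polynomials in an \emph{$\caL$-Atiyah class}. Since every characteristic class of a complex vector bundle is a polynomial in the Chern classes and $\psi$ is a ring homomorphism, the subring $C^\caL(V) = \psi(C(V))$ is generated by the classes $\psi(c_k(V))$, $k\ge 1$; thus it suffices to prove that $\psi(c_k(V)) = 0$ for every $k\ge 1$.

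First I would introduce the $\caL$-Atiyah class. Pulling back the Atiyah sequence \eqref{ses:atiyah} of $V$ along the anchor $\rho:\caL\to T_Y$, i.e. forming the fibre product $\atiyah(V)\times_{T_Y}\caL$, yields a short exact sequence of $\corO_Y$-modules
\begin{equation*}
0 \to \cEnd_{\corO_Y} V \to \atiyah(V)\times_{T_Y}\caL \to \caL \to 0\ .
\end{equation*}
By the equivalent description of a $\caL$-connection as a bundle map $\caL\to\atiyah(V)$ lifting the anchor, an $\corO_Y$-linear splitting of this sequence is precisely a holomorphic $\caL$-connection on $V$; hence such a connection exists if and only if the extension class $a_\caL(V)\in\Ext^1_{\corO_Y}(\caL,\cEnd_{\corO_Y} V)\cong H^1(Y;\cEnd_{\corO_Y} V\otimes\Omega^1_\caL)$ vanishes. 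By functoriality of the pull-back of extensions, $a_\caL(V) = \rho^* a(V)$, where $a(V)$ is the ordinary Atiyah class of $V$. In particular, if $V$ carries a $\caL$-module structure then it carries a (flat) holomorphic $\caL$-connection, and since only the existence of \emph{some} $\caL$-connection is relevant here, we conclude $a_\caL(V) = 0$.

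Next I would express the characteristic classes through these Atiyah classes. By Atiyah's theorem \cite{atiyah}, for each $k\ge 1$ there is a universal invariant polynomial $P_k$ (a normalized power of the trace) such that $P_k(a(V))\in H^k(Y;\Omega^k_Y)$ is a cocycle whose image under the edge homomorphism $H^k(Y;\Omega^k_Y)\to\bH^{2k}(Y;\Omega^\bullet_Y)=H^{2k}(Y;\bC)$ of the Hodge--de Rham spectral sequence represents $c_k(V)$. The same construction performed with $\Omega^\bullet_\caL$ in place of $\Omega^\bullet_Y$ produces classes $c_k^\caL(V)\in H^{2k}(\caL;\bC)$ built from $a_\caL(V)$, and every arrow involved (the edge map, the trace, the cup products) is induced by the morphism of complexes $\rho^*:\Omega^\bullet_Y\to\Omega^\bullet_\caL$ defining $\psi$. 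Combined with $a_\caL(V)=\rho^*a(V)$ from the previous step, this gives $\psi(c_k(V)) = c_k^\caL(V)$, the image of $P_k(a_\caL(V))$. Since $P_k$ has no constant term, the vanishing $a_\caL(V)=0$ forces $P_k(a_\caL(V))=0$, whence $\psi(c_k(V))=0$ for all $k\ge1$ and therefore $C^\caL(V)=0$.

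The main obstacle is the content of the third paragraph: proving that $P_k(a_\caL(V))$ is a permanent cycle and that its image under the edge map is genuinely $\psi(c_k(V))$, i.e. establishing the $\caL$-analogue of Atiyah's identification of Chern classes with symmetric functions of the Atiyah class, together with its compatibility with $\rho^*$. The remaining steps are formal once this is in place. As an alternative, one may realize the same classes by Chern--Weil theory: a flat $\caL$-connection has vanishing curvature $F_\nabla\in\Omega^2_\caL\otimes\cEnd_{\corO_Y}V$, and $[P_k(F_\nabla)] = \psi(c_k(V))$, so the classes vanish directly; but identifying $[P_k(F_\nabla)]$ with $\psi(c_k(V))$ requires the same naturality statement for Chern--Weil representatives, and additionally a fine resolution of $\Omega^\bullet_\caL$ in order to compare holomorphic and $C^\infty$ data.
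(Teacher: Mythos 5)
Your argument is correct and is exactly the route the paper intends: it gives no proof in the text but states that the result ``follows from a generalization of Atiyah's theory on the existence of holomorphic connections to the Lie algebroid setting'' and defers to \cite{tesi}, which is precisely your construction of the $\caL$-Atiyah class as the pull-back $\rho^*a(V)$ via the fibre product of \eqref{ses:atiyah} with $\caL$, its vanishing in the presence of any (not necessarily flat) $\caL$-connection, and the identification $\psi(c_k(V))=P_k(a_\caL(V))$. You correctly isolate the only nontrivial technical input (naturality of the Atiyah--Chern--Weil identification under $\rho^*:\Omega^\bullet_Y\to\Omega^\bullet_\caL$), which is the content of the cited reference.
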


Now to prove Proposition \ref{prop:divisible} one needs to study the cohomology of $\caL_N$. One has the following (see \cite{tutti}):

\begin{proposition}
The complex $\Omega^\bullet_{\atiyah(N)}$ fits in the exact sequence of complexes
$$
0\rightarrow \Omega_Y^\bullet \rightarrow \Omega^\bullet_{\atiyah(N)} \rightarrow \Omega^{\bullet -1}_Y \rightarrow 0
$$
that induces a long exact sequence in cohomology
$$
\cdots \rightarrow H^p(Y;\bC) \rightarrow H^p(\atiyah(N);\bC) \rightarrow H^{p-1}(Y;\bC) \rightarrow H^{p+1}(Y;\bC) \rightarrow\cdots
$$
where the connecting morphism is given by the cup product with $c_1(N)$.

In particular, for the cohomology of $\atiyah(N)$ we have the isomorphisms:
$$
H^k(\atiyah(N) ; \bC) = \operatorname{Ker} \gamma_{|H^{k-1}(Y; \bC)} \oplus \frac{H^k(Y;\bC)}{\operatorname{Im} \gamma_{H^{k-2}(Y; \bC)}}\ ,
$$
where $\gamma : H^\bullet(Y;\bC) \to H^{\bullet+2}(Y; \bC)$ is the cup product with $c_1(N)$.  
\end{proposition}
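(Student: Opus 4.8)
The plan is to produce the short exact sequence of complexes by a purely algebraic filtration argument, pass to hypercohomology, and then carry out the one genuinely geometric computation: identifying the connecting map with cup product by $c_1(N)$. First I would dualize the Atiyah sequence \eqref{ses:atiyah_lb} to \eqref{ses:atiyah_dual}, namely $0\to\Omega^1_Y\to\Omega^1_{\atiyah(N)}\xrightarrow{R^*}\corO_Y\to 0$, and take $p$-th exterior powers. Since the quotient $\corO_Y$ is locally free of rank one, the canonical two-step filtration on $\bigwedge^p\Omega^1_{\atiyah(N)}$ collapses to the short exact sequence $0\to\Omega^p_Y\to\Omega^p_{\atiyah(N)}\to\Omega^{p-1}_Y\to 0$, where the inclusion is $\rho^\ast$ (pullback of forms along the anchor) and the surjection is contraction with the isotropy generator $R$. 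This gives the sequence of sheaves in each degree; to make it a sequence of \emph{complexes} I must check compatibility with the differentials. The left map $\rho^\ast$ is a chain map because the anchor $\rho:\atiyah(N)\to T_Y$ is a morphism of Lie algebroids, so $d_{\atiyah(N)}\circ\rho^\ast=\rho^\ast\circ d_Y$. For the right map I would verify that the induced differential on the quotient is, up to sign, the de Rham differential $d_Y$ shifted by one, which is a local computation.

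For that local computation I would work over an open set on which $N$ admits a holomorphic connection, i.e. a splitting $\lambda:T_Y\to\atiyah(N)$ of \eqref{ses:atiyah_lb}. This produces a local $1$-form $\theta\in\Omega^1_{\atiyah(N)}$ characterized by $R^*\theta=1$ and $\theta|_{\lambda(T_Y)}=0$, so that every local $\atiyah(N)$-form is written uniquely as $\alpha+\beta\wedge\theta$ with $\alpha\in\Omega^p_Y$ and $\beta\in\Omega^{p-1}_Y$. The key identity is $d_{\atiyah(N)}\theta=\rho^\ast F_\nabla$, where $F_\nabla\in\Omega^2_Y$ is the curvature of $\nabla$; this follows from the Cartan formula for $d_{\atiyah(N)}$ evaluated on horizontal and isotropy arguments, using that the isotropy $R(1)$ is central and that the curvature measures the failure of $\lambda$ to be a bracket homomorphism. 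Then $d_{\atiyah(N)}(\alpha+\beta\wedge\theta)=d_Y\alpha+(-1)^{p-1}\beta\wedge\rho^\ast F_\nabla+d_Y\beta\wedge\theta$, which exhibits the quotient differential as $\pm d_Y$ and confirms that we have a short exact sequence of complexes.

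Next I would take the hypercohomology long exact sequence and identify the terms: $\bH^p(Y;\Omega^\bullet_Y)=H^p(Y;\bC)$ by the holomorphic Poincar\'e lemma, $\bH^p(Y;\Omega^\bullet_{\atiyah(N)})=H^p(\atiyah(N);\bC)$ by definition, and $\bH^p(Y;\Omega^{\bullet-1}_Y)=H^{p-1}(Y;\bC)$ from the degree shift; this already yields the stated long exact sequence with connecting map of degree $+2$. The main obstacle is to identify this connecting map with $\gamma=\,\cup\,c_1(N)$. I expect to compute it through a good cover: choosing local connections $\nabla_i$ gives forms $\theta_i$ with $\theta_i-\theta_j=\rho^\ast\eta_{ij}$ and $d_{\atiyah(N)}\theta_i=\rho^\ast F_i$, and the \v{C}ech--de Rham cocycle $(F_i,\eta_{ij},\dots)$ represents $c_1(N)$. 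Lifting a class $\beta$ to $\beta\wedge\theta_i$ and applying $d_{\atiyah(N)}$ produces, by the identity above, $\pm\beta\wedge\rho^\ast F_i$ together with the \v{C}ech correction terms, i.e. exactly $\pm\beta\cup c_1(N)$; equivalently, one may observe that this connecting map is the extension class of the sequence of complexes, which by Proposition \ref{prop:la_extension} is $c_1(N)$. The bookkeeping needed to turn the local identity into this global cup product is where the real work lies.

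Finally, the displayed decomposition is extracted from the long exact sequence by pure linear algebra. The segment $H^{k-2}(Y;\bC)\xrightarrow{\gamma}H^k(Y;\bC)\to H^k(\atiyah(N);\bC)\to H^{k-1}(Y;\bC)\xrightarrow{\gamma}H^{k+1}(Y;\bC)$ gives, by exactness, the short exact sequence $0\to H^k(Y;\bC)/\operatorname{Im}\gamma|_{H^{k-2}}\to H^k(\atiyah(N);\bC)\to\operatorname{Ker}\gamma|_{H^{k-1}}\to 0$ of $\bC$-vector spaces, which splits, yielding the asserted (non-canonical) isomorphism.
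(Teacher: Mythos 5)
The paper does not actually prove this proposition --- it is stated with a pointer to the reference on nonabelian holomorphic Lie algebroid extensions --- so there is no in-text argument to compare against; your proposal is a correct and complete proof strategy along the standard lines one would expect that reference to follow (exterior powers of the dual Atiyah sequence using that the quotient is a line bundle, the local connection form $\theta$ with $d_{\atiyah(N)}\theta=\rho^*F_\nabla$, and the \v{C}ech--de Rham identification of the degree-$+2$ connecting map with cup product by $c_1(N)$, consistent with Proposition \ref{prop:la_extension}). The only parts left as ``bookkeeping'' (signs on the quotient differential, the cocycle manipulation identifying the connecting map) are genuinely routine and do not affect the final splitting, which holds as you say because every short exact sequence of $\bC$-vector spaces splits.
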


%Define $F^1 \subseteq \Omega_{L_N}^p$ to be the subsheaf consisting of forms $\eta$ such that $i_R \eta =0$, where $i_\bullet$ for $\bullet \in L_N$ denotes the contraction and $R\in L_N$ is the image of $1\in \corO_Y \hookrightarrow L_N$. Then $F^1= \Omega_Y^p$, and the quotient $\Omega^p_{L_N}/F^1$ is naturally isomorphic to $\Omega^{p-1}_Y$. From this the exact sequence of complexes follows.

%To compute the connecting morphism one should choose local splitting of this sequence and see how they glue together. But splittings of this sequence are the same thing as connections on $N$, so the proposition follows.

\vspace{0.4cm}
Now let us examine further properties of $\atiyah(N)$-modules, i.e. $\atiyah(N)$-connections that are flat.

To begin, one has the following basic examples of $\atiyah(N)$-modules:

\begin{itemize}
\item any $T_Y$-module is naturally a $\atiyah(N)$-module, with $\delta_R = 0$. Conversely any $\atiyah(N)$-module with $\delta_R = 0$ is naturally a $T_Y$-module. In particular, $\corO_Y$ is a $\atiyah(N)$-module;
\item for any $a\in \Z$, $N^{\otimes a}$ is a $\atiyah(N)$-module, with $\delta_R = a \cdot \id_{N^{\otimes a}}$;
\item for $V$ a $\atiyah(N)$-module and $a\in\Z$, the tensor product $V \otimes N^{\otimes a}$ is naturally a $\atiyah(N)$-module, and $\delta_R^{V\otimes N^{\otimes a}} = \delta_R^V \otimes \id_{N^{\otimes a}} + a\cdot\id_{V \otimes N^{\otimes a}}$.
\end{itemize}

The following proposition can be seen as a prequel to the correspondence between $\atiyah(N)$-modules and logarithmic connections that we will establish in the following sections:

\begin{proposition} \label{prop:rigidity}
Let $(V,\delta)$ be a $\atiyah(N)$-module, and $\delta_R\in \End(V)$ the isotropy endomorphism. Then the endomorphisms of the fibers $\delta_R(y) \in \End(V(y))$ belong to the same conjugacy class for any $y\in Y$.

In particular, the eigenvalues of $\delta_R(y)$ do not depend on the point $y$.
\end{proposition}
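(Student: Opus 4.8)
The plan is to exploit the fact that the isotropy $R(1)=\id_N\in\atiyah(N)$ is a \emph{central} section of $\atiyah(N)$. Indeed, viewing elements of $\atiyah(N)$ as first order operators on $N$ with scalar symbol, $R(1)$ is the identity operator, which commutes with every such operator, so $[u,R(1)]=0$ for all local sections $u$ of $\atiyah(N)$. Writing the $\atiyah(N)$-module structure as a Lie algebroid morphism $\nabla:\atiyah(N)\to\atiyah(V)$, flatness means $\nabla([u,u'])=[\nabla(u),\nabla(u')]$. Since $\delta_R=\nabla(R(1))$ (the contraction of $\delta$ with $R(1)$ is exactly $R^*\circ\delta$), centrality gives at once
\begin{equation*}
[\nabla(u),\delta_R]=\nabla([u,R(1)])=0 \qquad\text{for every }u\in\atiyah(N).
\end{equation*}
In other words, $\delta_R$ is a flat (covariantly constant) section of $\End(V)$ equipped with the induced $\atiyah(N)$-module structure.

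To turn this into a statement about fibres I would work locally. Since $N$ is locally trivial, the sequence \eqref{ses:atiyah_lb} splits over a small open set $U$, and a splitting $\lambda:T_U\to\atiyah(N)_{|U}$ is precisely a holomorphic connection on $N_{|U}$. Setting $\nabla^T_X:=\nabla(\lambda(X))$ produces an ordinary holomorphic connection on $V_{|U}$, and applying the identity above to $u=\lambda(X)$ yields $[\nabla^T_X,\delta_R]=0$ for every vector field $X$, i.e. $\delta_R$ is $\nabla^T$-parallel. (One also computes that the curvature of $\nabla^T$ equals $F\cdot\delta_R$ for $F$ a representative of $c_1(N)$, but this will not be needed.)

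Finally I would invoke parallel transport. For a smooth path $\gamma$ from $y_1$ to $y_2$, the holomorphic connection $\nabla^T$ produces a linear isomorphism $P_\gamma:V(y_1)\to V(y_2)$; because $\delta_R$ is $\nabla^T$-parallel, $P_\gamma$ intertwines the two endomorphisms, so $\delta_R(y_2)=P_\gamma\,\delta_R(y_1)\,P_\gamma^{-1}$. Thus $\delta_R(y_1)$ and $\delta_R(y_2)$ lie in the same conjugacy class, so the conjugacy class of $\delta_R(y)$ is locally constant; on a connected $Y$ it is therefore independent of $y$, whence in particular the eigenvalues are constant. The point requiring care is that $\nabla^T$ need not be flat, so its transport is path-dependent; this is harmless, since the parallelism of $\delta_R$ gives the conjugation relation along \emph{every} path, and we never need a global splitting (which would fail exactly when $c_1(N)\neq0$). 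I expect the only real, though minor, obstacle to be the bookkeeping of assembling the local pictures, i.e. chaining the conjugations across a covering of an arbitrary path by such open sets $U$.
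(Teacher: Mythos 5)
Your argument is correct and has the same skeleton as the paper's proof: locally split off a $T_Y$-connection from $\delta$, show that $\delta_R$ is parallel for it, conclude by parallel transport, and chain along a path for distant points. Two details differ, both in your favour. First, you derive the key commutation $[\nabla(u),\delta_R]=0$ from the observation that $R(1)$ is a central section of $\atiyah(N)$ (it is the identity operator, with vanishing symbol), so that flatness gives $[\nabla(u),\nabla(R(1))]=\nabla([u,R(1)])=0$; the paper simply asserts the consequence $\delta_R\circ\delta_\zeta=\delta_\zeta\circ\delta_R$, and your centrality remark is exactly the computation behind it. Second, the paper insists that the local splitting $\zeta$ be a morphism of Lie algebroids, so that $\delta_\zeta$ is a \emph{flat} connection with canonical local parallel transport; you allow an arbitrary $\corO_Y$-linear splitting and accept a non-flat $\nabla^T$. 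This spares you the (small but nonzero) task of justifying that flat local splittings of \eqref{ses:atiyah_lb} exist, at the cost of a path-dependent transport, which is harmless precisely because $\delta_R$ is parallel along every path. The one point left implicit in both arguments is that parallel transport of a holomorphic connection along a real path is taken after extending by the Dolbeault operator $\bar\partial_V$; since $\delta_R$ is a holomorphic, $\nabla^T$-parallel section of $\End(V)$, it is parallel for the extended connection and the conjugation relation $\delta_R(y_2)=P_\gamma\,\delta_R(y_1)\,P_\gamma^{-1}$ goes through as you state.
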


\begin{proof}

Let $y_0\in Y$, and choose $\zeta$ a local splitting of the sequence \eqref{ses:atiyah_lb} in a neighborhood of $y_0$, such that $\zeta$ is a morphism of Lie algebroids. %This is possible since the choice of a splitting $\zeta$ is equivalent to the choice of a connection on the line bundle $N$, and to require that $\zeta$ is a morphism of Lie algebroid is equivalent to requiring that the associated connection is flat.

Define $\delta_\zeta = \delta \circ \zeta$. We have $\delta = \delta_R + \delta_\zeta$, with $\delta_\zeta$ a flat $T_Y$-connection on $V$. For any other point $y$ in the neighborhood where $\zeta$ is defined, the parallel transport defined by $\delta_\zeta$ induces an isomorphism $T: V(y_0) \rightarrow V(y)$. 

\emph{Claim}: $T$ conjugates the endomorphisms $\delta_R(y_0)$ and $\delta_R(y)$.

\noindent
The flatness of $\delta$ implies that $\delta_R\circ \delta_\zeta = \delta_\zeta \circ \delta_R$.
Let $v\in V(y_0)$, and $s$ be the unique $\delta_\zeta$-horizontal section with $s(y_0)=v$. Then $T(v) = s(y)$ and
$$
\delta_R(y)(T(v)) = \delta_R(y)(s(y)) = (\delta_Rs)(y)
$$
while on the other hand
$$
T(\delta_R(y_0)(v)) = T((\delta_Rs)(y_0)) = (\delta_Rs)(y)\ .
$$
The last equality holds because, since $s$ is $\delta_\zeta$-horizontal, $\delta_R s$ is $\delta_\zeta$-horizontal as well, and it defines the parallel transport of $\delta_R(y_0)(v)$.

\vspace{0.3cm}

For $y$ a point far from $y_0$, join $x$ to $x_0$ with a smooth path, cover the path with a finite number of open sets over which splittings as before exist, and iterate the previous argument.

\end{proof}

\begin{proposition} \label{prop:decomposition}
Let $(V,\delta)$ be a $\atiyah(N)$-module, and let $V = \bigoplus_\lambda V_\lambda$ be the decomposition of $V$ in generalized eigenspaces for $\delta_R$, i.e. $V_\lambda = \ker(\delta_R-\lambda \id)^a$ for some integer $a$ large enough.

Then each $V_\lambda$ is a sub-$\atiyah(N)$-module of $V$ and the decomposition $V=\bigoplus_\lambda V_\lambda$ is a decomposition as a direct sum of $\atiyah(N)$-modules.

In particular, if $V$ is an irreducible $\atiyah(N)$-module, then its isotropy endomorphism $\delta_R$ has only one eigenvalue.
\end{proposition}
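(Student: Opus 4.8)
The plan is to establish first that the isotropy endomorphism $\delta_R$ is \emph{parallel} for $\delta$, and then to read off the decomposition from this. Recall that a flat $\atiyah(N)$-connection is the same as a Lie algebroid morphism $\nabla:\atiyah(N)\to\atiyah(V)$, and that under this description the isotropy endomorphism is $\delta_R=\nabla(R(1))$, where $R(1)\in\atiyah(N)$ is the isotropy section; via Definition \ref{def:atiyah_vb} this section is nothing but the identity operator $\id_N$ acting on $N$. The key structural observation is that $R(1)$ is central in $\atiyah(N)$: being the identity differential operator it commutes with every first order operator, so $[R(1),v]=0$ for all local sections $v$ of $\atiyah(N)$. (Equivalently $[v,R(f)]=\rho(v)(f)\,\id_N$ for $f\in\corO_Y$, which vanishes when $f$ is the constant $1$.)

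From here I would use flatness directly. Applying the morphism $\nabla$ to the identity $[R(1),v]=0$ and using that $\nabla$ preserves brackets gives $0=\nabla([R(1),v])=[\nabla(R(1)),\nabla(v)]=[\delta_R,\nabla(v)]$ for every $v$. Thus $\delta_R$ commutes, as an endomorphism of $V$, with each of the first order operators $\nabla(v)$; in other words $\delta_R$ is a horizontal section of $\cEnd_{\corO_Y}V$ for the induced $\atiyah(N)$-connection. This single commutation relation is the conceptual heart of the argument.

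Next I would transfer this to the generalized eigenspaces. By Proposition \ref{prop:rigidity} the endomorphisms $\delta_R(y)$ lie in a fixed conjugacy class, so the dimensions of their generalized eigenspaces are independent of $y$; consequently each $V_\lambda=\ker(\delta_R-\lambda\id)^a$ has locally constant rank and is a holomorphic subbundle, and $V=\bigoplus_\lambda V_\lambda$ holds as an equality of bundles. To check that $V_\lambda$ is a sub-$\atiyah(N)$-module, let $w$ be a local section of $V_\lambda$ and $v\in\atiyah(N)$ arbitrary. Since $\lambda$ is a constant scalar, $(\delta_R-\lambda\id)^a$ still commutes with $\nabla(v)$, whence $(\delta_R-\lambda\id)^a(\nabla(v)w)=\nabla(v)\big((\delta_R-\lambda\id)^a w\big)=0$, so that $\nabla(v)w\in V_\lambda$. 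This is exactly the condition $\delta(V_\lambda)\subseteq V_\lambda\otimes\Omega^1_{\atiyah(N)}$, and since $\delta$ preserves every summand the decomposition is one of $\atiyah(N)$-modules.

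Finally the irreducibility statement follows formally: an irreducible module admits no nontrivial decomposition into submodules, so only one $V_\lambda$ can be nonzero and $\delta_R$ has a single eigenvalue. The part that requires genuine input, rather than formal manipulation, is the constancy of the ranks of the $V_\lambda$, and this is precisely what Proposition \ref{prop:rigidity} provides; without it the generalized eigenspaces would be only fiberwise defined and need not assemble into subbundles. Everything else reduces to the centrality of $R(1)$ together with flatness, and to routine polynomial manipulations with the commuting endomorphism $\delta_R$.
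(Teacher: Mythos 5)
Your proposal is correct and takes essentially the same route as the paper: the paper's proof also rests on the single observation that flatness forces $\delta$ to commute with $\delta_R$, whence $(\delta_R-\lambda\id)^a$ commutes with $\delta$ and each generalized eigenspace is preserved. Your derivation of that commutation from the centrality of $R(1)=\id_N$ and bracket-preservation, and your explicit appeal to Proposition \ref{prop:rigidity} for the subbundle structure of the $V_\lambda$, merely spell out steps the paper leaves implicit.
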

\begin{proof}
The flatness of $\delta$ implies that $\delta$ commutes with $\delta_R$. On the other hand, $\delta$ commutes with the multiplication by $\lambda$, since $\lambda$ is a complex number. Then $(\delta_R -\lambda)^a \circ \delta = \delta \circ (\delta_R -\lambda)^a$, so if $v \in V_\lambda$ we have $\delta v \in V_\lambda \otimes \Omega_{\atiyah(N)}$ as well, i.e. $V_\lambda$ is a sub-$\atiyah(N)$-module of $V$, and the proposition follows. 
\end{proof}

\section{Restriction of logarithmic connections}

\subsection{The logarithmic complex}

In this subsection we recall briefly some definition and basic properties on the logarithmic tangent complex and logarithmic connections. Since our later results hold (at the moment) only for divisors that are smooth, we will simplify the exposition and assume from the beginning that the divisor $D$ where the singularities take place is smooth, whereas the results of this subsection are usually formulated for $D$ a divisor with simple normal crossing singularities. For more details, see for example \cite{deligne}, \cite{ev}.

\vspace{0.4cm}
Let $X$ be a complex manifold, and $D\subseteq X$ a smooth divisor. 
\begin{definition}
The sheaf of {\em $p$-forms with logarithmic poles along $D$} is the subsheaf $\Omega^p_X(\log D) \subseteq \Omega^p_X(D)$ which consists of the $p$-forms $\eta \in \Omega^p_X(D)$ such that $d\eta \in \Omega^{p+1}_X(D)$. 
\end{definition}

Clearly, the differential of a form with logarithmic poles has again logarithmic poles, so $(\Omega^\bullet_X(\log D) , d)$ forms a complex of sheaves. 

One has the following:
%for $D$ a divisor with simple normal crossing singularities:
\begin{proposition}
\begin{enumerate}
\item the sheaves $\Omega^p_X(\log D)$ are $\corO_X$-locally free, and one has isomorphisms $\Omega^p_X(\log D) = \bigwedge^p \Omega^1_X(\log D)$;
\item if $x_1,\ldots,x_n$ are local coordinates on $X$ such that $D$ has equation $\{x_1  = 0\}$, a frame of $\Omega_X^1(\log D)$ is given by $\{\frac{d x_1}{x_1},  d x_{2}, \ldots, d x_n\}$.
\end{enumerate}
\end{proposition}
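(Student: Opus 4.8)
The plan is to reduce both statements to a local computation near $D$, since away from $D$ one has $\Omega^p_X(\log D) = \Omega^p_X$, which is already locally free and compatible with exterior powers. Working near a point of $D$, I would fix local coordinates $x_1,\ldots,x_n$ with $D = \{x_1 = 0\}$, so that $\corO_X(D) = \frac{1}{x_1}\corO_X$ and every section of $\Omega^p_X(D)$ is $\frac{1}{x_1}$ times a holomorphic $p$-form. The whole argument then rests on tracking the pole order of the exterior differential.

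For part (2), first I would check that the proposed frame consists of logarithmic forms: $d\!\left(\frac{dx_1}{x_1}\right)=0 \in \Omega^2_X(D)$, and the $dx_i$ with $i \ge 2$ are holomorphic, hence trivially logarithmic. For the converse, I write a general $\eta \in \Omega^1_X(D)$ as $\eta = \frac{1}{x_1}\sum_i a_i\, dx_i$ with $a_i \in \corO_X$ and compute
$$
d\eta = -\frac{1}{x_1^2}\, dx_1 \wedge \sum_{i\ge 2} a_i\, dx_i + \frac{1}{x_1}\sum_i da_i \wedge dx_i .
$$
The second summand has at most a first order pole, while the first has a pole of order two; imposing $d\eta \in \Omega^2_X(D)$ forces $a_i \in x_1\corO_X$ for $i \ge 2$, i.e. $\eta = \frac{a_1}{x_1}dx_1 + \sum_{i\ge2} b_i\, dx_i$ with $a_1, b_i \in \corO_X$. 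This exhibits $\eta$ as an $\corO_X$-linear combination of $\frac{dx_1}{x_1}, dx_2,\ldots,dx_n$; linear independence over $\corO_X$ is clear since $dx_1,\ldots,dx_n$ are independent, so these forms are a frame and $\Omega^1_X(\log D)$ is locally free of rank $n$.

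For part (1) I would run the same bookkeeping in degree $p$. I decompose a section $\eta \in \Omega^p_X(D)$ as $\eta = \frac{dx_1}{x_1}\wedge \gamma' + \frac{1}{x_1}\delta'$, where $\gamma'$ and $\delta'$ are holomorphic forms not containing $dx_1$. Using $d\!\left(\frac{dx_1}{x_1}\right)=0$ one finds that $d\eta$ has a single term of pole order two, namely $-\frac{1}{x_1^2}\, dx_1 \wedge \delta'$, all other contributions being at most first order. Hence $d\eta \in \Omega^{p+1}_X(D)$ holds if and only if $\delta'$ is divisible by $x_1$, i.e. $\frac{1}{x_1}\delta'$ is holomorphic. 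This shows that the logarithmic $p$-forms are exactly the $\corO_X$-span of the wedge monomials $\frac{dx_1}{x_1}\wedge dx_{i_2}\wedge\cdots\wedge dx_{i_p}$ and $dx_{j_1}\wedge\cdots\wedge dx_{j_p}$ (with $i_k, j_k \ge 2$), which is simultaneously a local frame — so $\Omega^p_X(\log D)$ is locally free — and precisely the image of $\bigwedge^p \Omega^1_X(\log D)$ inside $\Omega^p_X(D)$ under the frame of part (2), giving the claimed isomorphism.

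The computation is elementary, so the only real subtlety, and the step I would be most careful about, is the pole-order accounting in $d\eta$: the point is that the genuinely logarithmic part $\frac{dx_1}{x_1}\wedge\gamma'$ contributes no order-two pole precisely because $d\!\left(\frac{dx_1}{x_1}\right)=0$, so the second order pole is isolated in the $\frac{1}{x_1}\delta'$ piece and its vanishing is equivalent to the divisibility condition. I would also note that, although the splitting into the $dx_1$-part and the $dx_1$-free part depends on the chosen coordinates, the resulting characterization is intrinsic, so the local frames patch to a globally locally free sheaf and the identification with $\bigwedge^p\Omega^1_X(\log D)$ is canonical.
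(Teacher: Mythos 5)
Your argument is correct: the pole-order bookkeeping in $d\eta$ is exactly the right mechanism, and the decomposition $\eta = \frac{dx_1}{x_1}\wedge\gamma' + \frac{1}{x_1}\delta'$ correctly isolates the unique order-two term $-\frac{1}{x_1^2}\,dx_1\wedge\delta'$, whose vanishing modulo first-order poles is equivalent to $x_1$ dividing $\delta'$. The paper states this proposition without proof, citing Deligne and Esnault--Viehweg; your computation is precisely the standard local argument from those sources, so there is nothing to reconcile.
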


%One can define the sheaves of forms with logarithmic poles along $D$ $\Omega^1_X(\log D)$ of \emph{differentials with logarithmic sigularities along $D$} to be the subsheaf of $\Omega_X(D)$ of forms $\alpha$ such that both $\alpha$ and $d \alpha$ have simple poles along $D$. It is a locally free $\corO_X$-module, and locally, if $t$ is an equation of $D$ and we choose $x_1,\ldots,x_{n-1},t$ coordinates for $X$, it is $\corO_X$-generated by $d x_1, \ldots, d x_{n-1}, dt/t$.

The sheaf $\Omega^1_X(\log D)$ contains the sheaf of holomorphic forms $\Omega^1_X$, and one has the exact sequence
\begin{equation} \label{ses:omega_log}
0 \rightarrow \Omega^1_X \rightarrow \Omega^1_X(\log D) \rightarrow O_D \rightarrow 0\ .
\end{equation}
The quotient map is the Poincar\'e residue, that we denote by $R_D$, and may be described explicitly as follows: for $\alpha \in \Omega^1_X(\log D)$ write $\alpha = \alpha^h + \alpha^0 \frac{dx_1}{x_1}$ with $\alpha^h \in \Omega^1_X$ and $\alpha^0 \in \corO_X$; then
\begin{equation}
R_D \left( \alpha^h + \alpha^0 \frac{dx_1}{x_1} \right) = \alpha^0 \: _{|D}\ .
\end{equation}

\vspace{0.4cm}
A {\em connection with logarithmic poles along $D$} (or simply a {\em logarithmic connection} when no confusion on the divisor $D$ may arise) is a pair $(E, \nabla)$ with $E$ a holomorphic vector bundle on $X$ and 
$$
\nabla: E \to E \otimes \Omega^1_X(\log D)
$$
a map of $\mathbb C_X$-modules satisfying the Leibniz rule $\nabla(fs) = f \cdot \nabla s + s \otimes df$. One introduces the curvature of a logarithmic connection in the usual way, and say that the connection is {\em flat} when its curvature vanishes.

Let $(E,\nabla)$ be a logarithmic connection. One defines the {\em residue} of $\nabla$ along $D$ to be the composition 
$$
E \stackrel{\nabla}{\to} E \otimes \Omega^1_X(\log D) \stackrel{R_D}{\to} E\otimes \corO_D \ .
$$
One checks that the composition is $\corO$-linear, and that it vanishes on the sections in $E(-D)$, so that it defines an endomorphism $\res_D\nabla \in \End_{\corO_D}(E_{|D})$.

\vspace{0.4cm}
From the Lie algebroid point of view, one can rephrase the previous paragraphs as follows: let us denote be $T_X(- \log D)$ the vector bundle dual to $\Omega^1_X(\log D)$. This is a sub-$\corO_X$-module of the tangent bundle $T_X$, and is described as the set of derivations $V \in T_X = \Der(\corO_X)$ that preserve $I_D$, the ideal sheaf of $D$, i.e. such that $V(I_D) \subseteq I_D$. Equivalently, it consists of those vector fields on $X$ that are tangent to $D$, and if $x_1,\ldots,x_n$ are coordinates on $X$ such that $D$ has equation $x_1=0$, then a local frame of $T_X(- \log D)$ is given by $\{ x_1\cdot \partial_{x_1}, \partial_{x_2}, \ldots , \partial_{x_n} \}$.

The commutator of two vector fields in $T_X(- \log D)$ is again in $T_X(-\log D)$, so $T_X(- \log D)$ is a sub-Lie algebroid of $T_X$. The following is straightforward:
\begin{proposition}
Let $X$ be a complex manifold, and $D$ a smooth divisor in $X$.

Then the complex $\Omega^\bullet_{T_X(-\log D)}$ is isomorphic to $\Omega_X^\bullet (\log D)$.
\end{proposition}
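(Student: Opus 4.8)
The plan is to separate the statement into a comparison of the underlying graded $\corO_X$-modules and a comparison of the differentials. First I would note that the graded pieces already coincide on the nose: by the definition of $T_X(-\log D)$ as the $\corO_X$-dual of $\Omega^1_X(\log D)$ one has $\Omega^1_{T_X(-\log D)} = (T_X(-\log D))^* = \Omega^1_X(\log D)$, and taking exterior powers and invoking the earlier proposition $\Omega^p_X(\log D) = \bigwedge^p \Omega^1_X(\log D)$ gives
$$
\Omega^p_{T_X(-\log D)} = \bigwedge^p_{\corO_X} (T_X(-\log D))^* = \bigwedge^p \Omega^1_X(\log D) = \Omega^p_X(\log D)
$$
for every $p$. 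Hence the two complexes have the same underlying graded sheaf, and the whole content of the statement is that the two differentials agree under this identification.

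Next I would pin down the two differentials. On the Lie algebroid side, $d_{T_X(-\log D)}$ is the operator produced by the Cartan formula, using the inclusion $T_X(-\log D) \hookrightarrow T_X$ as anchor and the commutator of vector fields as bracket (this is the induced structure on a bracket-closed submodule of $T_X$). On the logarithmic side, the differential is the restriction of the ordinary exterior derivative $d$, which is well defined on $\Omega^\bullet_X(\log D)$ precisely because $d$ preserves logarithmic poles.

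The core step is the comparison of these two operators, and I would perform it by restricting to the dense open set $U = X \setminus D$. Over $U$ the divisor is empty, so $T_X(-\log D)|_U = T_U$ is the full tangent Lie algebroid and $\Omega^\bullet_X(\log D)|_U = \Omega^\bullet_U$; since the Lie algebroid differential of the tangent Lie algebroid is by construction the de Rham differential, $d_{T_X(-\log D)}$ and $d$ coincide over $U$. To upgrade this to an equality over all of $X$ I would use that $\Omega^\bullet_X(\log D)$ is $\corO_X$-locally free, hence torsion-free, and that $U$ is dense: a local logarithmic form whose restriction to $U$ vanishes must vanish, because in a local frame its coefficients are holomorphic functions vanishing on a dense open and hence identically zero. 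Applying this to $(d_{T_X(-\log D)} - d)\eta$ for an arbitrary local section $\eta$ shows the two differentials are equal, completing the identification of complexes. The only point requiring care is the temptation to compare the two operators directly through the Cartan formula, which would force one to justify that the de Rham differential of a genuinely meromorphic form is computed by that formula; the restriction-to-$U$ argument sidesteps this entirely, since there every form in sight is honestly holomorphic, so I do not expect a serious obstacle.
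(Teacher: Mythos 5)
Your proof is correct. Note that the paper itself offers no argument here --- it simply labels the proposition ``straightforward'' --- so there is nothing to match your proof against; what one can say is that the intended quick argument is presumably the direct one in adapted coordinates: the frames $\{x_1\partial_{x_1},\partial_{x_2},\dots,\partial_{x_n}\}$ of $T_X(-\log D)$ and $\{\tfrac{dx_1}{x_1},dx_2,\dots,dx_n\}$ of $\Omega^1_X(\log D)$ are dual to one another, which gives the identification of the graded pieces, and one then checks on these frames that the Cartan formula reproduces $d$. Your route is a clean coordinate-free alternative: the identification of graded pieces via reflexivity of the locally free sheaf $\Omega^1_X(\log D)$ is fine, and the comparison of differentials by restricting to the dense open $U=X\setminus D$, where both operators are the de Rham differential of the tangent algebroid, followed by the identity-theorem argument for sections of a locally free sheaf, is valid --- the difference $(d_{T_X(-\log D)}-d)\eta$ is an honest holomorphic section of the locally free sheaf $\Omega^{p+1}_X(\log D)$ vanishing on a dense open set, hence zero. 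The one point worth making explicit if you write this up is that the canonical identification $\Omega^p_{T_X(-\log D)}\cong\Omega^p_X(\log D)$ restricts over $U$ to the standard identification of both sides with $\Omega^p_U$, so that the comparison over $U$ is genuinely a comparison of the same two operators; this is immediate from the naturality of the double-dual map but should be said. Your approach buys you freedom from justifying the Cartan formula for meromorphic forms, at the cost of being slightly less self-contained than the two-line frame computation.
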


So logarithmic connection are the same thing as $T_X(-\log D)$-connection, and flat logarithmic connection are equivalent to $T_X(-\log D)$-module.

\subsection{Restriction of logarithmic objects to the divisor of singularities}

The tangent logarithmic bundle fits in the exact sequence
\begin{equation} \label{ses:tangent_log}
0 \rightarrow T_X(- \log D) \rightarrow T_X \rightarrow N_{D/X} \rightarrow 0\ ,
\end{equation}
where $N_{D/X}$ is the normal bundle to $D$ in $X$. Denote by $L_{D/X}$ the restriction of $T_{X} (-\log D)$ to the divisor $D$. By restricting the sequence \eqref{ses:tangent_log} to the divisor $D$ one obtains the exact sequence of $\corO_D$-modules
\begin{equation} \label{eqn:4exact}
0 \rightarrow \corO_D \rightarrow L_{D/X} \rightarrow T_{X|D} \rightarrow N_{D/X} \rightarrow 0\ .
\end{equation}
The image of the central arrow coincides with $T_D$, and the sequence splits into the two short exact sequences 
\begin{equation} \label{ses:split}
0 \rightarrow \corO_D \rightarrow L_{D/X} \rightarrow T_D \rightarrow 0 \quad ; \quad 0 \rightarrow T_D \rightarrow T_{X|D} \rightarrow N_{D/X} \rightarrow 0\ .
\end{equation}

One has the following:

\begin{thm} \label{thm:simpson}
The $\corO_D$-module $L_{D/X} = T_X(-\log D)_{|D}$ inherits a structure of Lie algebroid over $D$.

As a Lie algebroid, $L_{D/X}$ is isomorphic to $\atiyah(N_{D/X})$, the Atiyah algebroid of the normal bundle $N_{D/X}$.
\end{thm}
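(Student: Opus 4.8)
The plan is to prove the two assertions separately: first to equip $L_{D/X}$ with a Lie algebroid structure by restriction from $X$, and then to produce an explicit isomorphism with $\atiyah(N_{D/X})$ arising from the adjoint action of logarithmic vector fields on the normal bundle. The single fact driving both steps is the defining property of logarithmic vector fields, namely that every section $V$ of $T_X(-\log D)$ satisfies $V(I_D)\subseteq I_D$.

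For the Lie algebroid structure, I would regard $D$ as an \emph{invariant} submanifold for $T_X(-\log D)$: the anchor (the inclusion into $T_X$) carries $L_{D/X}$ into $T_D$, as already recorded by the left sequence in \eqref{ses:split}. Writing $L_{D/X} = T_X(-\log D)/I_D\cdot T_X(-\log D)$, a local section of $L_{D/X}$ is the class $\bar u$ of a local section $u$ of $T_X(-\log D)$, and I would define the anchor as the induced map to $T_D$ and the bracket by lifting and restricting, $[\bar u,\bar v]:=\overline{[u,v]}$. The only point to verify is that this descends, i.e. that $[\,I_D\cdot T_X(-\log D),\,T_X(-\log D)]\subseteq I_D\cdot T_X(-\log D)$; writing a section of $I_D\cdot T_X(-\log D)$ as $\sum f_i w_i$ with $f_i\in I_D$ and using $[f_i w_i,v]=f_i[w_i,v]-v(f_i)\,w_i$ together with $v(f_i)\in I_D$ (valid precisely because $v\in T_X(-\log D)$), both summands lie in $I_D\cdot T_X(-\log D)$. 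The Leibniz rule for the descended bracket, with the announced anchor, follows from the same computation taken modulo $I_D$.

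For the isomorphism I would use the adjoint action on the normal bundle $N_{D/X}=T_{X|D}/T_D$. Since $V\in T_X(-\log D)$ is tangent to $D$, the operator $W\mapsto[V,W]$ descends after restriction to $D$ to a $\bC$-linear operator $\Phi(V)$ on $N_{D/X}$, and $\Phi(V)$ depends only on $\bar V\in L_{D/X}$ by the computation of the previous paragraph. From $[V,gW]=g[V,W]+V(g)\,W$ one reads off that $\Phi(V)$ satisfies the Leibniz rule of Definition \ref{def:atiyah_vb} with symbol equal to the anchor $V_{|D}\in T_D$, so $\Phi$ is a map of vector bundles $L_{D/X}\to\atiyah(N_{D/X})$ covering $\id_{T_D}$ and sending the isotropy into $\cEnd(N_{D/X})$. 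That $\Phi$ respects the brackets is exactly the Jacobi identity, since $[\Phi(V_1),\Phi(V_2)]=\Phi([V_1,V_2])$ translates into $[V_1,[V_2,W]]-[V_2,[V_1,W]]=[[V_1,V_2],W]$.

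Finally, to conclude that $\Phi$ is an isomorphism I would compare the two extensions of $T_D$ by $\corO_D$, namely the left sequence of \eqref{ses:split} and the sequence \eqref{ses:atiyah_lb} for the line bundle $N_{D/X}$, and apply the five lemma: on the quotient $T_D$ the induced map is the identity by construction, and on the isotropy it is an isomorphism of line bundles, as one checks in local coordinates $x_1,\dots,x_n$ with $D=\{x_1=0\}$. There the generator $x_1\partial_{x_1}$ of the isotropy of $L_{D/X}$ acts on the frame $\overline{\partial_{x_1}}$ of $N_{D/X}$ by $[x_1\partial_{x_1},\partial_{x_1}]=-\partial_{x_1}$, i.e. as $-\id_{N_{D/X}}$, a generator of $\cEnd(N_{D/X})$. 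I expect the main point needing care to be the well-definedness of the descended bracket and of $\Phi$, both of which hinge on $v(I_D)\subseteq I_D$; the sign $-1$ on the isotropy is harmless for the five lemma and merely reflects the canonical identification $\atiyah(N_{D/X})\cong\atiyah(N_{D/X}\dual)$.
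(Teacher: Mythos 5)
Your proof is correct and follows essentially the same route as the paper: the bracket on $L_{D/X}$ is defined by lifting and restricting (well-defined because logarithmic fields preserve $I_D$), the map to $\atiyah(N_{D/X})$ is the adjoint action on the normal bundle, and the isomorphism follows by comparing the two extensions of $T_D$ by $\corO_D$. If anything you are slightly more explicit than the paper on two points it leaves implicit, namely the bracket compatibility of $\Phi$ (Jacobi) and the local verification that the induced map on the isotropy is an isomorphism (your sign $-1$ computation).
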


\begin{proof}
The anchor of $L_{D/X}$ is given by the quotient map of the first of the exact sequences in \eqref{ses:split}, while the bracket is defined by taking lifts to $T_X(-\log D)$ of the sections of $L_{D/X}$. This is well defined since one has
\begin{equation} \label{eqn:fund_log_tangent}
[T_X(-\log D), T_X(-D)] \subseteq T_X(-D)\ .
\end{equation}

To prove that $L_{D/X}$ is isomorphic to $\atiyah(N_{D/X})$, first notice that these are both Lie algebroid extensions of $T_D$ by $\corO_D$. Thus to prove the isomorphism it suffice to define a map of extensions from $L_{D/X}$ to $\atiyah(N_{D/X})$, and to do this means to define an action of $L_{D/X}$ on the sections of $N_{D/X}$ by means of differential operators living in $\atiyah(N_{D/X})$.

Let $\xi \in L_{D/X}$ and $\gamma \in N_{D/X}$. Choose $w \in T_X(-\log D)$ such that $w_{|D} = \xi$. Remark that $(w + w_0)_{|D} = \xi$ if and only if $w_0\in T_X(-\log D)(-D)$. Denote by $\pi$ the projection $T_{X|D}\rightarrow N_{D/X}$. Let $\bar{\gamma}$ be a section of $T_{X|D}$ with $\pi(\tilde{\gamma}) = \gamma$, and $v \in T_X$ a section such that $v_{|D} = \tilde{\gamma}$. Remark that an element $\pi((v + v_0)_{|D}) = \gamma$ if and only if $v_{0|D} \in T_D$, that is if and only if $v_0 \in T_X(- \log D)$.

Define the action of $\xi$ on $\gamma$ to be  $\nabla_\xi \gamma = \pi([w,v]_{|D})$, where the bracket is the commutator of vector fields of $T_X$. One should check that 
\begin{enumerate}
\item this definition does not depend on the choices, i.e. $\pi([w_0,v]_{|D}) = 0 = \pi([w,v_0]_{|D})$;
\item $\nabla_{f\xi} \ \gamma = f \nabla_\xi \gamma$ for any $f \in \corO_D$;
\item $\nabla_\xi(f \gamma) = f\nabla_\xi \gamma + \xi(f)\cdot \gamma$ for any $f \in \corO_D$.
\end{enumerate}

The point $1.$ is straightforward: $\pi([w_0,v]_{|D}) = 0$ if and only if $[w_0,v] \in T_X(- \log D)$, and this is true since $[T_X(- \log D)(-D), T_X] \subseteq T_X(- \log D)$, while $[w,v_0] \in T_X(- \log D)$ since both $w$ and $v_0$ live in $T_X(- \log D)$.

For the point $2.$, let $\tilde{f}$ be any function in $\corO_X$ such that $\tilde{f}_{|D} = f$; we have 
\begin{align*}
\nabla_{f\xi} \ \gamma & = \pi([\tilde{f}w,v]_{|D}) \\
    &= f\pi([w,v]_{|D}) - \pi((v(f)\cdot w)_{|D})
\end{align*}
and the last summand is zero since $w_{|D}\in T_D$. 

Similarly, for $3.$ we have: 
\begin{align*}
\nabla_{\xi} \ f\gamma & = \pi([w, \tilde{f} v]_{|D}) \\
    &= f\pi([w,v]_{|D}) + \pi((w(\tilde{f})\cdot v)_{|D})
\end{align*}
and the conclusion follows from the fact that $\xi(f) = w(\tilde{f})_{|D}$ for any choice of $w \in T_X(- \log D)$ and $\tilde{f} \in \corO_X$.

\end{proof}

\begin{remark} \label{rem:residue}
By this theorem, one obtains a new interpretation of the residue of a logarithmic form:  in fact, the residue map \eqref{ses:omega_log} coincides with the composition of the restriction $\Omega^1_X(\log D) \to \Omega^1_{L_{D/X}}$ with the map $R^*: \Omega^1_{L_{D/X}} \to \corO_D$ of \eqref{ses:atiyah_dual}.
\end{remark}

\begin{remark}
In \cite{tutti}, we defined a Hodge structure on the cohomology of $\atiyah(N)$ for $N$ a line bundle on a compact K\"ahler manifold $Y$. In the light of Theorem \ref{thm:simpson}, this Hodge structure is induced by the restriction of the mixed Hodge structure of $H^\cdot(X; \Omega^\bullet_N(\log Y))$.
\end{remark}

\begin{corollary} \label{cor:class}
The extension class of $L_{D/X}$ is equal to the Chern class of $N_{D/X}$. In particular, $L_{D/X}$ splits as a direct sum of Lie algebroids $T_D \oplus \corO_D$ if and only if the Chern class of $N_{D/X}$ is zero.
\end{corollary}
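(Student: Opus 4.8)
The plan is to deduce the statement directly from Theorem \ref{thm:simpson} together with Proposition \ref{prop:la_extension}, so that essentially no new computation is required. The crucial observation is that the isomorphism produced in the proof of Theorem \ref{thm:simpson} is not merely an abstract isomorphism of Lie algebroids, but an isomorphism of \emph{extensions}: by construction it was built as a map of extensions of $T_D$ by $\corO_D$, inducing the identity on the isotropy sub-object $\corO_D$ and on the quotient $T_D$. Consequently $L_{D/X}$ and $\atiyah(N_{D/X})$ represent the same class in the group $\mathbb{H}^2(D; \tau^{\geq 1}\Omega_D^\bullet)$ classifying such extensions.

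First I would record that, $N_{D/X}$ being a line bundle, the endomorphism sheaf $\cEnd_{\corO_D} N_{D/X}$ is canonically $\corO_D$, so that the defining sequence \eqref{ses:atiyah} for $\atiyah(N_{D/X})$ is precisely an extension of $T_D$ by $\corO_D$, exactly as the first sequence in \eqref{ses:split} exhibits $L_{D/X}$. Both objects are thus extensions of $T_D$ by $\corO_D$, and Proposition \ref{prop:la_extension} identifies the extension class of $\atiyah(N_{D/X})$ with $c_1(N_{D/X})$. Combining this with the isomorphism of extensions above yields that the extension class of $L_{D/X}$ equals $c_1(N_{D/X})$, which is the first assertion. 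For the splitting statement I would invoke the final clause of Proposition \ref{prop:la_extension}: the Atiyah algebroid $\atiyah(N_{D/X})$ is isomorphic to $T_D \oplus \corO_D$ as a Lie algebroid if and only if $c_1(N_{D/X}) = 0$; transporting this equivalence across the isomorphism of extensions from Theorem \ref{thm:simpson} gives the same dichotomy for $L_{D/X}$.

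There is essentially no serious obstacle once Theorem \ref{thm:simpson} and Proposition \ref{prop:la_extension} are in hand; the only point requiring genuine care is checking that the isomorphism of Theorem \ref{thm:simpson} respects the extension structure (i.e.\ restricts to the identity on $\corO_D$ and descends to the identity on $T_D$), so that the two extension classes may legitimately be compared. This is immediate from the way the isomorphism was constructed as a morphism of extensions, and hence the corollary follows formally.
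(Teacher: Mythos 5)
Your proposal is correct and is exactly the argument the paper intends: the corollary is stated without proof precisely because the isomorphism of Theorem \ref{thm:simpson} is constructed as a morphism of extensions of $T_D$ by $\corO_D$, so the extension class of $L_{D/X}$ is that of $\atiyah(N_{D/X})$, which Proposition \ref{prop:la_extension} identifies with $c_1(N_{D/X})$, and the splitting criterion follows from the same proposition. You correctly flag the one point that needs checking (that the isomorphism respects the extension structure), which is indeed immediate from the paper's construction.
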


\vspace{0.4cm}

Now, let us consider a logarithmic connection $(E,\nabla)$. This is a $T_X(- \log D)$-connection, so the restriction of $\nabla$ to $D$ will define a $\atiyah(N_{D/X})$-connection on $E_{|D}$. This can be checked by hand, but also follows from general result on pull-back of structures of Lie algebroid module established in \cite{chemla}.

%Moreover, the restriction of a flat logarithmic connection will carry a structure of $\atiyah(N_{D/X})$-module.

By Remark \ref{rem:residue}, the residue of a logarithmic connection coincides with the isotropy endomorphism of the $\atiyah(N_{D/X})$-connection structure of the restriction. Let us summarize this with the following:

\begin{proposition} \label{prop:residue-isotropy}
Let $(E,\nabla)$ be a logarithmic connection, and $(V,\delta) = (E,\nabla)_{|D}$ its restriction to $D$, with the structure of $\atiyah(N_{D/X})$-connection.

Then the residue $\res_D \nabla$ coincides with $\delta_R$, the isotropy endomorphism of $\delta$. 
\end{proposition}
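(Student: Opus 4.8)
The plan is to identify both endomorphisms as the composition of the connection with a residue map, and then to invoke the factorization of the Poincaré residue recorded in Remark~\ref{rem:residue}. Concretely, let $r : \Omega^1_X(\log D) \to \Omega^1_{L_{D/X}}$ be the natural restriction map dual to the inclusion $L_{D/X} \hookrightarrow T_X(-\log D)_{|D}$, and let $R^* : \Omega^1_{L_{D/X}} \to \corO_D$ be the quotient map of \eqref{ses:atiyah_dual} for the algebroid $L_{D/X} \cong \atiyah(N_{D/X})$ of Theorem~\ref{thm:simpson}. Remark~\ref{rem:residue} states precisely that $R_D = R^* \circ r$, so once each endomorphism is written in terms of these maps the identity reduces to a one-line diagram chase.

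First I would record the three ingredients. By definition $\res_D\nabla = (\id_E \otimes R_D)\circ \nabla$, which is $\corO$-linear and vanishes on $E(-D)$, hence descends to an endomorphism of $V = E_{|D}$. The restricted $\atiyah(N_{D/X})$-connection $\delta$ is, by construction of the restriction functor, the descent of $(\id_E \otimes r)\circ \nabla$: this composite kills a section $s$ with $s_{|D}=0$, since locally $s = x_1\tilde s$ gives $\nabla s = x_1\nabla\tilde s + \tilde s\otimes dx_1$ and $r(dx_1) = x_1\, r(\frac{dx_1}{x_1}) = 0$ on $D$, so it indeed factors through $V\to V\otimes\Omega^1_{L_{D/X}}$. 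Finally $\delta_R = (\id_V \otimes R^*)\circ \delta$ is the isotropy endomorphism defined via \eqref{ses:atiyah_dual}.

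With these in hand the computation is immediate: using $R_D = R^*\circ r$ one gets
$$
\res_D\nabla = (\id\otimes R_D)\circ\nabla = (\id\otimes R^*)\circ(\id\otimes r)\circ\nabla = (\id\otimes R^*)\circ\delta = \delta_R,
$$
all understood after restriction to $D$. As a sanity check I would verify this in local coordinates $x_1,\ldots,x_n$ with $D = \{x_1 = 0\}$: writing $\nabla s = A^0\, s\otimes\frac{dx_1}{x_1} + (\text{holomorphic part})$, the residue extracts $A^0_{|D}$, while $\delta_R$ is the action of the generator $(x_1\partial_{x_1})_{|D}$ of the isotropy $\corO_D\hookrightarrow L_{D/X}$ in \eqref{ses:split}, which pairs against the frame to give exactly the coefficient of $\frac{dx_1}{x_1}$, again $A^0_{|D}$. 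The only real point to watch---and thus the main, though purely bookkeeping, obstacle---is to confirm that the restriction functor normalizes $\delta$ as the descent of $(\id\otimes r)\circ\nabla$ and that the generator of the isotropy used for $\delta_R$ is $R^*$-dual to the Poincaré residue under the identification $\Omega^1_{L_{D/X}} = \Omega^1_X(\log D)_{|D}$; both are exactly the content of Theorem~\ref{thm:simpson} and Remark~\ref{rem:residue}, so no substantive difficulty remains.
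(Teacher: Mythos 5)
Your proof is correct and takes essentially the same route as the paper: the paper deduces this proposition directly from Remark~\ref{rem:residue}, i.e.\ from the factorization $R_D = R^*\circ r$ of the Poincar\'e residue through $\Omega^1_{L_{D/X}}$, and offers no further argument. Your write-up merely makes explicit the bookkeeping the paper leaves implicit (that $\delta$ is the descent of $(\id\otimes r)\circ\nabla$, and the local-coordinate identification of the isotropy generator with $(x_1\partial_{x_1})_{|D}$), all of which is accurate.
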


\begin{remark} \label{rem:sabbah}
This proposition generalizes other definitions of the residue of a logarithmic connection. See for instance Chapter 0 of \cite{sabbah}. In particular, in Chapter 0.14 of loc. cit., given a logarithmic connection $(E,\nabla)$, one constructs a $T_D$-connection on $E_{|D}$ under the assumption that the normal bundle $N_{D/X}$ is trivial. This construction follows from Proposition \ref{prop:residue-isotropy}, since after Corollary \ref{cor:class} when $N_{D/X}$ is trivial the Lie algebroid $\atiyah(N_{D/X})$ splits as $T_D \oplus \corO_D$, and in this case a $\atiyah(N_{D/X})$-connection on a $\corO_D$-module $V$ coincides with a pair $(R,\delta)$, with $R \in \End_{\corO_D}V$ and $\nabla$ a $T_D$-connection on $V$.

%One may see this as a special case of Proposition \ref{prop:residue-isotropy} as a generalizationo

%The $L_{D/X}$-connection structure on $E_{|D}$ explains this construction, since when $N_{D/X}$ is the trivial line bundle its Atiyah algebroid $L_{D/X}$ splits as the direct sum of the Lie algebroids $T_D \oplus \corO_D$, thus in this case the datum of a $L_{D/X}$-connection is equivalent to the datum of a $T_D$-connection and a $\corO_D$-connection (that is a $\corO_D$-linear endomorphism).
\end{remark}

\section{Monodromies and the Riemann-Hilbert correspondence}

\subsection{Monodromy of $\atiyah(N)$-modules}

Recall the following: let $Y$ be a complex manifold, and let $(E,\nabla)$ be a flat $T_Y$-connection of rank $r$. Then the sheaf of $\nabla$-horizontal sections $E^\nabla$ is a local system on $X$ with fiber $\mathbb C^r$.
\begin{definition}
The {\em monodromy} of $(E,\nabla)$ is the representation $T: \pi_1(Y) \to \GL(r,\mathbb C)$ defined by the local system $E^\nabla$.
\end{definition}

The monodromy of a flat vector bundle completely characterize it, since one has:
\begin{theorem} \label{thm:RHtrivial}
There is a one-to-one correspondence between:
\begin{itemize}
\item[-] equivalence classes of flat connections on $Y$ of rank $r$;
\item[-] conjugacy classes of representations of the fundamental group $T: \pi_1(Y) \to \GL(r,\mathbb C)$.
\end{itemize}
\end{theorem}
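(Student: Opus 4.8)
The plan is to establish the classical Riemann-Hilbert correspondence for flat connections (Theorem \ref{thm:RHtrivial}) by realizing it as a special case of the groupoid-theoretic machinery developed earlier in the paper. The key observation is that a flat $T_Y$-connection is precisely a $T_Y$-module, and since $T_Y$ is an integrable Lie algebroid whose $\textsc{s}$-connected, $\textsc{s}$-simply connected integration is the fundamental groupoid $\Pi_1(Y)$ (as established in Example on the fundamental groupoid), we are in a position to invoke Corollary \ref{cor:representations} directly.

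First I would apply Corollary \ref{cor:representations} with $\caL = T_Y$. This yields an equivalence between the category of $T_Y$-modules and the category of $\Pi_1(Y)$-modules, i.e. representations of the fundamental groupoid into $\operatorname{Iso}(E)$. Thus flat connections of rank $r$ correspond bijectively (up to equivalence) to rank $r$ representations of $\Pi_1(Y)$. The remaining step is to pass from representations of the \emph{groupoid} $\Pi_1(Y)$ to representations of the fundamental \emph{group} $\pi_1(Y,y_0)$.

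Next I would invoke Theorem \ref{thm:morita}, the transitivity-Morita equivalence. Since $Y$ is connected, $\Pi_1(Y)$ is a transitive Lie groupoid: its anchor $(s,t): \Pi_1(Y) \to Y \times Y$ is surjective because any two points are joined by a path. Fixing a basepoint $y_0 \in Y$, the vertex group is $\Pi_1(Y)^{y_0}_{y_0} = \pi_1(Y, y_0)$. Theorem \ref{thm:morita} then gives an equivalence between equivalence classes of rank $r$ representations of $\Pi_1(Y)$ and conjugacy classes of representations $\pi_1(Y,y_0) \to \GL(r,\mathbb C)$. Composing this with the equivalence from the previous paragraph produces exactly the desired one-to-one correspondence, so the theorem follows by chaining the two equivalences.

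The main subtlety, rather than a genuine obstacle, lies in the compatibility of the two notions of equivalence: Corollary \ref{cor:representations} is a categorical equivalence of module categories, whereas Theorem \ref{thm:morita} is phrased in terms of equivalence classes and conjugacy classes. I would therefore need to check that the isomorphism relation on flat $T_Y$-connections transported through Corollary \ref{cor:representations} matches the equivalence relation on $\Pi_1(Y)$-modules used in Theorem \ref{thm:morita}, and that the resulting bookkeeping identifies isomorphism classes of flat bundles with conjugacy classes of $\pi_1$-representations. This is routine given the explicit construction of $S_\rho$ in the proof of Theorem \ref{thm:morita}, since the restriction functor to the vertex group and the reconstruction functor are mutually inverse up to the stated equivalences. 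No analytic input beyond the standard fact that the horizontal sections form a local system is required, as that fact is already built into the identification of $\Pi_1(Y)$ as the integrating groupoid of $T_Y$.
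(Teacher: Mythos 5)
Your proposal matches the paper's own argument: the paper derives Theorem \ref{thm:RHtrivial} by exactly the same two-step chain, first using the second theorem of Lie (via the integration of $T_Y$ by $\Pi_1(Y)$) to identify flat connections with $\Pi_1(Y)$-modules, and then using Theorem \ref{thm:morita} and the transitivity of $\Pi_1(Y)$ to reduce to conjugacy classes of representations of the vertex group $\pi_1(Y,y)$. The correspondence is correct and your remark about reconciling the two notions of equivalence is a reasonable bookkeeping point that the paper glosses over.
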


As it was noted in \cite{gualtieri}, this correspondence may be understood as a consequence of Theorem \ref{thm:LieII} and Theorem \ref{thm:morita}: flat connections are representations of the tangent Lie algebroid $T_Y$; the $\textsc{s}$-connected $\textsc{s}$-simply connected Lie algebroid integrating $T_Y$ is the fundamental groupoid $\Pi_1(Y)$, so by Theorem \ref{thm:LieII} flat connections are in correspondence with representations of $\Pi_1(Y)$; now, $\Pi_1(Y)$ is a transitive Lie groupoid, so by Theorem \ref{thm:morita} representations of $\Pi_1(Y)$ are in correspondence with representations of $\Pi_1(Y)^y_y = \pi_1(Y,y)$.

\vspace{0.4cm}
Let us examine the same arguments for the Atiyah algebroid of a line bundle: let  $N$ be a line bundle on $Y$, and consider $\atiyah(N)$, the Atiyah algebroid of $N$. The principal $\mathbb{C^*}$-bundle of frames of $N$ is isomorphic to $N\setminus Y$, with the $\mathbb{C}^*$-action given by dilation on the fibers, and $\atiyah(N)$ is isomorphic to $\atiyah(N\setminus Y)$. The $\textsc{s}$-connected $\textsc{s}$-simply connected Lie groupoid integrating $\atiyah(N\setminus Y)$ is $\mathfrak{C}(N \setminus Y)$ of Example \ref{ex:GP-groupoid}, which is transitive. So  representations of $\atiyah(N)$ are in correspondence with representations of the vertex group $\mathfrak{C}(N \setminus Y)^y_{y}$, for any $y\in Y$. Let us sum this up:
\begin{theorem} \label{thm:LieII_atiyah}
There is a one-to-one correspondence between:
\begin{itemize}
\item[-] equivalence classes of rank $r$ representations of the Lie algebroid $\atiyah(N)$;
\item[-] conjugacy classes of group homomorphisms $\mathfrak{C}(N\setminus Y)^y_y \to \GL(r, \mathbb{C})$. 
\end{itemize}
\end{theorem}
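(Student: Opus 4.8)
The plan is to assemble Theorem \ref{thm:LieII_atiyah} by chaining together three results already available in the excerpt, exactly as was done for the classical case in the discussion following Theorem \ref{thm:RHtrivial}. A rank $r$ representation of $\atiyah(N)$ is, by definition, a flat $\atiyah(N)$-connection on a rank $r$ vector bundle, i.e. an $\atiyah(N)$-module. First I would invoke Corollary \ref{cor:representations}: since $\atiyah(N)$ is integrable (its gauge-path groupoid $\mathfrak{C}(N\setminus Y)$ from Example \ref{ex:GP-groupoid} integrates it and is $\textsc{s}$-connected and $\textsc{s}$-simply connected), the category of $\atiyah(N)$-modules is equivalent to the category of $\mathfrak{C}(N\setminus Y)$-modules. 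This reduces the problem from the infinitesimal (algebroid) level to the global (groupoid) level.

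Next I would pass from groupoid representations to vertex-group representations using Theorem \ref{thm:morita}. The key hypothesis there is transitivity of the groupoid, so the step I must justify is that $\mathfrak{C}(N\setminus Y)$ is a transitive Lie groupoid. This follows because its anchor factors through the fundamental groupoid $\Pi_1(Y)$ via the surjection $p:\mathfrak{C}(N\setminus Y)\to\Pi_1(Y)$ of Example \ref{ex:GP-groupoid}, and $\Pi_1(Y)$ is transitive over the connected base $Y$; equivalently, transitivity is inherited from the surjectivity of the anchor $(s,t)$, which holds since $N\setminus Y\to Y$ admits local sections and $\Pi_1(N\setminus Y)$ is transitive on the total space. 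Granting transitivity, Theorem \ref{thm:morita} yields an equivalence between equivalence classes of rank $r$ representations $\mathfrak{C}(N\setminus Y)\to\operatorname{Iso}(E)$ and conjugacy classes of group homomorphisms $\mathfrak{C}(N\setminus Y)^y_y\to\GL(r,\mathbb C)$, for any fixed $y\in Y$.

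Composing the two equivalences gives the desired one-to-one correspondence between equivalence classes of rank $r$ $\atiyah(N)$-modules and conjugacy classes of homomorphisms of the vertex group $\mathfrak{C}(N\setminus Y)^y_y$ into $\GL(r,\mathbb C)$. The only genuine verification, and thus the main obstacle, is confirming that $\mathfrak{C}(N\setminus Y)$ is transitive so that Theorem \ref{thm:morita} applies; everything else is a formal concatenation of the cited equivalences. I would remark that the identification $\atiyah(N)\cong\atiyah(N\setminus Y)$ (coming from the fact that the frame bundle of the line bundle $N$ is $N\setminus Y$ with its fibrewise $\mathbb C^*$-action) is what lets us replace the abstract Atiyah algebroid with the concrete gauge-path groupoid, and this should be stated explicitly at the outset since the statement of the theorem is phrased in terms of $\mathfrak{C}(N\setminus Y)$.
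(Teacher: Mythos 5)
Your proposal is correct and follows essentially the same route as the paper: identify $\atiyah(N)$ with $\atiyah(N\setminus Y)$ via the frame bundle, pass to the $\textsc{s}$-connected $\textsc{s}$-simply connected integrating groupoid $\mathfrak{C}(N\setminus Y)$ of Example \ref{ex:GP-groupoid} using Corollary \ref{cor:representations}, and then apply Theorem \ref{thm:morita} to its vertex group. The paper simply asserts transitivity of $\mathfrak{C}(N\setminus Y)$ without comment, so your explicit justification of that point is a welcome (if minor) addition rather than a divergence.
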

One can describe the vertex groups $\mathfrak{C}(N\setminus Y)^y_y$ as follows: recall that one has the diagram \eqref{diagram0} describing the gauge-path groupoid of a principal bundle. By pulling it back to a point $y \in Y$ one obtains the following diagram of groups:
\begin{equation}\label{diagram1}
\xymatrix{
& 0 \ar[d] & 0 \ar[d] & & \\
0 \ar[r] & \mathbb{Z} \ar[r]\ar[d] & \mathbb{C} \ar[r] \ar[d] & \mathbb{C}^* \ar[r] \ar@{=}[d] & 1 \\
1 \ar[r] & \pi_1(N\setminus Y, u) \ar[r]\ar[d] & \mathfrak{C}(N\setminus Y)^y_y \ar[r]\ar[d] & \mathbb C^* \ar[r] & 1 \\
& \pi_1(Y,y) \ar[d]\ar@{=}[r] & \pi_1(Y,y) \ar[d] & &\\
& 1 & 1 & & 
}\end{equation}
where $u$ is any point in the fiber $(N\setminus Y)(y)$. The first column is the exact sequence that describes the fundamental group of $N\setminus Y$, where the inclusion $\mathbb Z \to \pi_1(N\setminus Y, u)$ is given by the class of the loop around the zero on the fiber $(N\setminus Y)(y)$. Moreover, this is a push out diagram, and one obtains:
\begin{lemma} \label{lem:vertex_group}
Let us denote by $\gamma_0$ the image of $1$ via $\mathbb Z \to \pi_1(N\setminus Y, u)$.

Then there is a natural isomorphism of groups 
$$
\mathfrak{C}(N \setminus Y)^y_y \ \ = \frac{\pi_1(N\setminus Y , u) \times \mathbb C}{ \mathbb Z}
$$ 
where the action of $\mathbb Z$ is given by $[\gamma,z] = [\gamma \cdot \gamma_0^n , z-n]$ for any $n\in \mathbb Z$.
\end{lemma}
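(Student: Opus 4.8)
The plan is to produce an explicit isomorphism out of the group $(\pi_1(N\setminus Y,u)\times\mathbb{C})/\mathbb{Z}$ by combining the two subgroups that the diagram \eqref{diagram1} exhibits inside $\mathfrak{C}(N\setminus Y)^y_y$. Write $P=N\setminus Y$ and $G=\mathbb{C}^*$. The middle row of \eqref{diagram1} realizes $\pi_1(P,u)$ as the kernel of the holonomy (gauge) map $h:\mathfrak{C}(P)^y_y\to\mathbb{C}^*$, while the middle column realizes $\mathbb{C}=\tilde G$ as the kernel of $p:\mathfrak{C}(P)^y_y\to\pi_1(Y,y)$, where $\exp:\mathbb{C}\to\mathbb{C}^*$ has kernel $\mathbb{Z}$. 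Concretely, a class coming from $\mathbb{C}$ is represented by a vertical path $\sigma_z(t)=u\cdot\exp(tz)$ running along the fibre from $u$ to $u\cdot\exp(z)$, so that $h([\sigma_z])=\exp(z)$ and $[\sigma_z][\sigma_w]=[\sigma_{z+w}]$. I would define
$$
\Psi:\pi_1(P,u)\times\mathbb{C}\longrightarrow\mathfrak{C}(P)^y_y,\qquad \Psi([\delta],z)=[\delta]\cdot[\sigma_z],
$$
and prove that it is a surjective homomorphism whose kernel is the copy of $\mathbb{Z}$ described in the statement.

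The main point, and the step I expect to be the real obstacle, is the centrality of the subgroup $\mathbb{C}=\tilde G\hookrightarrow\mathfrak{C}(P)^y_y$; this is what makes $\Psi$ multiplicative. Here I would invoke the description of the kernel of $p$ in Example \ref{ex:GP-groupoid}: that kernel is the associated bundle $P\times_G\tilde G$ whose $G$-action on $\tilde G$ is the conjugation $(g\cdot\gamma)(t)=g\,\gamma(t)\,g^{-1}$. Since $G=\mathbb{C}^*$ is abelian this action is trivial, and the conjugation of $\mathfrak{C}(P)^y_y$ on its normal subgroup $\tilde G$ factors through exactly this adjoint action; hence $\mathbb{C}$ is central. (Equivalently, one checks directly that the fibre loop $\gamma_0$ is central in $\pi_1(P,u)$ by mapping a torus $S^1\times S^1\to P$, built from the principal action along one factor and an arbitrary loop along the other, whose induced map on $\pi_1(S^1\times S^1)=\mathbb{Z}^2$ forces $\gamma_0$ to commute with every class.) Granting centrality, together with $[\sigma_z][\sigma_w]=[\sigma_{z+w}]$, a short computation gives
$$
\Psi([\delta_1],z_1)\,\Psi([\delta_2],z_2)=[\delta_1][\sigma_{z_1}][\delta_2][\sigma_{z_2}]=[\delta_1][\delta_2][\sigma_{z_1+z_2}]=\Psi([\delta_1\delta_2],z_1+z_2),
$$
so $\Psi$ is a homomorphism.

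It then remains to compute image and kernel, which is routine bookkeeping with the two exact sequences. For surjectivity, given $g\in\mathfrak{C}(P)^y_y$ choose $z$ with $\exp(z)=h(g)$; then $g[\sigma_z]^{-1}$ has trivial holonomy, hence lies in $\pi_1(P,u)$, and $g=\Psi(g[\sigma_z]^{-1},z)$. For the kernel, $\Psi([\delta],z)=1$ forces $\exp(z)=h([\delta][\sigma_z])=1$, so $z=n\in\mathbb{Z}$; the vertical loop $\sigma_n$ winds $n$ times around the fibre and therefore represents $\gamma_0^{\,n}$ in $\pi_1(P,u)$, whence $[\delta]=\gamma_0^{-n}$. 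Thus $\ker\Psi=\{(\gamma_0^{-n},n):n\in\mathbb{Z}\}=\langle(\gamma_0,-1)\rangle\cong\mathbb{Z}$, which is precisely the subgroup whose quotient yields the action $[\gamma,z]=[\gamma\gamma_0^{n},z-n]$. This identifies $\mathfrak{C}(P)^y_y$ with $(\pi_1(N\setminus Y,u)\times\mathbb{C})/\mathbb{Z}$ as claimed; note that the centrality of $\mathbb{C}$ is exactly what reconciles this central product with the pushout description of the square in \eqref{diagram1}.
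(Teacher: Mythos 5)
Your proof is correct, and it is substantially more detailed than what the paper offers: the paper's entire justification for this lemma is the assertion that the relevant square in diagram \eqref{diagram1} ``is a push out diagram.'' Taken literally in the category of groups, the pushout of $\mathbb{C}\leftarrow\mathbb{Z}\rightarrow\pi_1(N\setminus Y,u)$ would be an amalgamated \emph{free} product, not the central quotient $(\pi_1(N\setminus Y,u)\times\mathbb{C})/\mathbb{Z}$; the extra input needed to get the stated formula is precisely the commutation of the images of $\pi_1(N\setminus Y,u)$ and $\mathbb{C}$ inside $\mathfrak{C}(N\setminus Y)^y_y$, which you correctly isolate as the crux and prove (the paper only invokes centrality of $\gamma_0$ much later, in the proof of Theorem \ref{tau-atiyah}). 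Your surjectivity and kernel computations are routine and correct. Two small points to tighten: (i) your normalization of the vertical path should use $e^{2\pi i tz}$ rather than $e^{tz}$ so that the kernel of $\mathbb{C}\to\mathbb{C}^*$ is genuinely $\mathbb{Z}$, matching the top row of \eqref{diagram1}; (ii) your parenthetical torus argument, as stated, only yields centrality of $\gamma_0$ in $\pi_1(N\setminus Y,u)$, which is weaker than what $\Psi$ needs --- but the same action-map square $(s,t)\mapsto\alpha(s)\cdot\exp(2\pi i tz)$ does give the full commutation of every $[\sigma_z]$ with every $[\delta]\in\pi_1(N\setminus Y,u)$, and your primary argument via the triviality of the adjoint action of the abelian group $\mathbb{C}^*$ on $\widetilde{\mathbb{C}^*}$ already covers it, so this is a matter of phrasing rather than a gap.
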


We can now state the following:
\begin{definition} \label{def:monodromy}
Let $(V,\delta)$ be a $\atiyah(N)$-module. 

The {\em monodromy} of $(V,\delta)$ is the representation $T_\delta:\pi_1(N\setminus Y) \to \GL( r, \mathbb{C})$ obtained from the composition $\pi_1(N\setminus Y) \to \mathfrak{C}(N\setminus Y)^y_y \to \GL(r,\mathbb{C})$.
\end{definition}

In the next subsection we will give a geometric construction of this monodromy, that justifies the terminology.

%\begin{lemma}
%The monodromy groups $\mathfrak{C}(N\setminus Y)_y$ fit in an exact sequence:
%$$
%1 \to \pi_1(N\setminus Y; u) \to \mathfrak{C}(N\setminus Y)_y \to \mathbb{C^*} \to 1 
%$$
%for any $y\in Y$ and $u \in N\setminus Y$ with $p(u)=y$.
%\end{lemma}
%\begin{proof}
%Let $[\gamma]$ be the class of an element in $\mathfrak{C}(N\setminus Y)_y$, and $u\in N\setminus Y$ with $p(u)=y$. Then $\gamma$ is a path in $N\setminus Y$ with $p(\gamma(0)) = p(\gamma(1)) = y$, and since $[\gamma] = [g \gamma]$, we can assume that $\gamma(0) = u$. Then the morphism $\mathfrak{C}(N\setminus Y)_y \to \mathbb{C}^*$ assigns to $[\gamma]$ the element in $g\in \mathbb{C}^*$ such that $\gamma(1) = g\gamma(0)$. Clearly the kernel of this map corresponds to loops that have $\gamma(0) = \gamma(1)$, that corresponds to loops in $N \setminus Y$ with base point $u$, i.e. elements of $\pi_1(N\setminus Y; u)$.

%\end{proof}

\subsection{From $\atiyah(N)$-modules to logarithmic connections}

Let $p: N \to Y$ be a line bundle over a complex variety as before. Consider $N$ as a manifold itself, and denote by  $\iota: Y \rightarrow N$ the embedding given by the zero section; this makes $Y$ a smooth divisor in $N$, and we can consider $T_N(-\log Y)$, the sheaf of vector fields on $N$ tangent to $Y$.

Consider $T_p$, the differential of the projection $p$; this gives the exact sequence
$$
0\rightarrow \ker (T_p) \rightarrow T_N \rightarrow p^* T_Y \rightarrow 0 .
$$
Since $N$ is a line bundle, the vertical bundle $\ker (T_p)$ is isomorphic to $\corO_N(Y)$, the line bundle whose sections are the meromorphic function on $N$ with a simple pole at $Y$.

Now consider the pull back of this sequence via the inclusion of  $T_N(-\log Y)$ in $T_N$. The restriction of $T_p$ to $T_N(-\log Y)$ is again surjective, and one has the exact sequence
\begin{equation} \label{ses:tangentlog}
0 \rightarrow \ker(T_p) \cap T_N(-\log Y) \rightarrow T_N(-\log Y) \rightarrow p^* T_Y \rightarrow 0 .
\end{equation}
Vectors in $\ker (T_p)\cap T_N(-\log Y)$ are tangent both to $Y$ and to the fibers of $p$, so it consists of those vector fields in $\ker(T_p)$ that vanish on $Y$. Then $\ker(T_p) \cap T_N(-\log Y) = \ker(T_p) \otimes \corO_N(-Y) = \corO_N$, i.e. $T_N(-\log Y)$ is an extension of $p^*T_Y$ by $\corO_N$. We have:

\begin{theorem} \label{thm:pull-back_atiyah}
The exact sequence \eqref{ses:tangentlog} is equivalent to the extension
$$
0 \rightarrow \corO_N \rightarrow p^*\atiyah(N) \rightarrow p^*T_Y \rightarrow 0
$$
obtained by pulling back to $N$ the short exact sequence of the Atiyah algebroid of $N$.
\end{theorem}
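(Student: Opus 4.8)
The plan is to exhibit an explicit isomorphism of extensions between \eqref{ses:tangentlog} and the pulled-back Atiyah sequence, both of which are extensions of the $\corO_N$-module $p^*T_Y$ by $\corO_N$ (for \eqref{ses:tangentlog} this was just established; for the Atiyah side it follows by applying $p^*$ to \eqref{ses:atiyah_lb} and using $p^*\corO_Y = \corO_N$). Once I produce an $\corO_N$-linear morphism $\Phi\colon p^*\atiyah(N)\to T_N(-\log Y)$ that restricts to the identity on the sub $\corO_N$ and induces the identity on the quotient $p^*T_Y$, the short five lemma forces $\Phi$ to be an isomorphism, which is exactly the asserted equivalence of extensions.

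The heart of the proof is the construction of $\Phi$. To a local differential operator $D\in\atiyah(N)$ with symbol $\sigma_D$ I associate a vector field $\hat D$ on the total space of $N$, defined as the unique derivation of $\corO_N$ whose action on generators is prescribed as follows: on fibrewise-constant functions $p^*g$ (with $g\in\corO_Y$) it acts by $\hat D(p^*g)=p^*(\sigma_D g)$, and on fibrewise-linear functions, which are exactly the (pullbacks of) sections of $N^*$, it acts by the contragredient $\atiyah(N)$-action induced by $D$ on $N^*$. Equivalently, identifying $p_*\corO_N$ with $\bigoplus_{k\ge0}(N^*)^{\otimes k}$ (fibrewise polynomials), $\hat D$ is the grading-preserving derivation that is $\sigma_D$ in degree $0$ and the induced $\atiyah(N)$-action in degree $k$; the existence of this action on each tensor power is what makes the definition canonical. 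Being grading-preserving, $\hat D$ preserves the ideal of the zero section $Y$, hence $\hat D\in T_N(-\log Y)$. I then check that $D\mapsto\hat D$ is $\corO_Y$-linear, i.e. $\widehat{gD}=p^*g\cdot\hat D$, which lets me extend it $\corO_N$-linearly to a well-defined map $\Phi$ out of the pullback $p^*\atiyah(N)$.

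It remains to verify the two compatibilities. Applying $T_p$ to $\hat D$ recovers $p^*\sigma_D$, so $\Phi$ is compatible with the anchors, i.e. with the projections to $p^*T_Y$. For the isotropy, the operator $R=\id_N\in\atiyah(N)$ generating the sub $\corO_Y$ is sent to the Euler (fibre-dilation) vector field, which in a local trivialization with fibre coordinate $t$ is $\pm\, t\partial_t$, the canonical generator of $\ker(T_p)\cap T_N(-\log Y)\cong\corO_N$; thus $\Phi$ is the identity on the sub $\corO_N$. The short five lemma then yields the isomorphism. I expect the genuinely delicate points to be, first, pinning down the identification of the two copies of $\corO_N$ so that $\Phi$ is \emph{exactly} the identity there (this is where the sign in the contragredient action and the chosen generator of $\ker(T_p)\cap T_N(-\log Y)$ must be matched), and second, the careful verification of $\corO_N$-linearity over the pullback. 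Finally, since $\hat D$ is built from derivations and the brackets on both sides are commutators, functoriality gives $[\hat D_1,\hat D_2]=\widehat{[D_1,D_2]}$ (using that $\atiyah(N)\to\atiyah((N^*)^{\otimes k})$ is a morphism of Lie algebroids), so that $\Phi$ is in fact an isomorphism of Lie algebroids, recovering the statement from the introduction that $T_N(-\log Y)\cong p^*\atiyah(N)$.
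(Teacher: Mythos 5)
Your proposal follows essentially the same route as the paper: both construct the morphism of extensions $p^*\atiyah(N)\to T_N(-\log Y)$ by letting $D$ act on $\corO_N\cong\widehat{\Sym_{\corO_Y}N^*}$ as the derivation that is $\sigma_D$ on pullbacks of functions and the induced (contragredient) action on $N^*$ in degree one, the paper writing this in a local frame $s$ as $\delta(s^*)=-aA^ss^*$ and then checking frame-independence, while you phrase the same derivation invariantly. The remaining verifications (preservation of $I_Y$, matching of the sub-$\corO_N$ with the Euler field, compatibility with the projection to $p^*T_Y$) coincide with the paper's.
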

\begin{proof}
Since both $p^* \atiyah(N)$ and $T_N(-\log Y)$ are an extension of $p^* T_Y$ by $\corO_N$, to prove that they are equivalent extensions it suffice to construct a morphism of extensions $p^*\atiyah(N) \rightarrow T_N(-\log Y)$.

So let $D$ be a section of $\atiyah(N)$ and $a\in \corO_N$; they define $a\otimes D$, a section of $p^*\atiyah(N) = \corO_N \otimes_{p^{-1} \corO_Y} p^{-1} \atiyah(N)$. We are going to construct $\Psi(a\otimes D) \in T_N(-\log Y)$, where we see the latter as the sheaf of derivations of $\corO_N$ preserving the ideal $I_Y$.

Let us fix $s \in N$ a non vanishing local section. Then $Ds = A^s \cdot s$ for some $A^s \in \corO_Y$, and for any section $s'$ of $N$ we have $s' = fs$ for some $f\in\corO_Y$ and
\begin{equation}
D(fs) = (f A^s + \sigma_D(f)) s \ ,
\end{equation}
where recall from Definition \ref{def:atiyah_vb} that $\sigma_D$ denotes the symbol of $D$, and defines the anchor $\atiyah(N) \to T_Y$.

On the other hand, the algebra of functions $\corO_N$ is isomorphic to the completion of $\Sym_{\corO_Y}^\bullet N^*$, the symmetric algebra of $N^*$. By letting $s^*\in N^*$ denote the dual section to $s \in N$, in the neighborhood where $s$ is defined the elements of $\corO_N$ are formal power series in $s^*$ with coefficients in $\corO_Y$. Define the derivation $\delta = \Psi(a\otimes D)$ by
\begin{equation}
\delta(f) = a\cdot \sigma_D(f) \ \ \text{for } \ f\in\corO_Y,\quad \delta(s^*) = -a A^s s^*,
\end{equation}
and extend it to $\corO_N$ by the Leibniz rule. Under the isomorphism $\corO_N = \widehat{\Sym_{\corO_Y} N^*}$, the ideal $I_Y$ correspond to the ideal generated by $s^*$, so it is clear that $\Psi(a\otimes D) \in T_N(-\log Y)$. We should check that this definition does not depend on the choice of $s$, and that $\Psi$ defines a morphism of extensions.

Let $\bar{s}$ be another non vanishing section of $N$, with $\bar{s}= gs$ for some $g\in \corO_Y$ invertible function. Then $\bar{s}^* = g^{-1} s^*$ and $A^{\bar{s}} = A^s + g^{-1}\sigma_D(g)$. Then on one hand
\begin{align*}
\delta(\bar{s}^*) = - A^{\bar{s}} \bar{s}^* = -(A^s + g^{-1} \sigma_D(g)) g^{-1} s^*,
\end{align*}
while on the other hand
\begin{align*}
\delta(g^{-1} s^*) = \delta(g^{-1})s^* + g^{-1}\delta(s^*) = -g^{-2} \sigma_D(g) s^* - g^{-1} A^s s^*,
\end{align*}
so that $\delta$ is well defined.

The compatibility of $\Psi$ with the projection to $p^*T_Y$ is straightforward, since the projection of $a \otimes D$ is $a\otimes \sigma_D$, while the projection of $\delta$ is induced from the action of $\delta$ on the pull back of functions of $Y$, and for $\delta = \Psi(a \otimes D)$ this gives exactly $a \otimes \sigma_D$.

To check that $\Psi$ is compatible with the inclusion of $\corO_N$, remark that the inclusion $\corO_N \rightarrow T_N(-\log Y)$ corresponds to the Euler vector field on $N$, i.e. the vector field generated by the $\bC^*$-action on the fibers of $N$; let us denote it by $\delta_R$. Since the $\mathbb C^*$-action preserves the fibers, $\delta_R$ applied to  pull-backs of functions on $Y$ is zero, i.e. $\delta_R (f)=0$ for any $f\in \corO_Y$. By construction, $\delta_{\Psi(a \otimes D)}(f) = a \sigma_D (f)$, and this is zero exactly when $\sigma_D=0$, that is when $a\otimes D$ lives in the image of $\corO_N \to p^*L_N$.

\end{proof}

So the pull back $p^*\atiyah(N)$ is naturally a Lie algebroid over $N$, which is equivalent to the logarithmic tangent bundle $T_N(-\log Y)$. Thus, by the results of \cite{chemla} we have an induced functor at the level of modules, namely
$$
p^* : \ \ \atiyah(N)\text{-modules} \ \longrightarrow \ T_N(-\log Y)\text{-modules}\ .
$$
Since $p \circ \iota = \id_Y$, the $p^*$-functor is a left adjoint to the restriction functor
$$
\cdot_{|Y} : T_N(-\log Y)\text{-modules} \ \longrightarrow \ \atiyah(N)\text{-modules}\ .
$$

%Since $\Psi$ is a morphism of extensions, it is invertible, and its inverse $\Psi^{-1}:T_N(-\log Y) \rightarrow p^* \caL_N$ defines a morphism of Lie algebroids according to the definition of Section 2. Thus, by the results of \cite{chemla}, given a $\caL_N$-module $(V,\delta)$ one can pull-back $\delta$ to define a $T_N(-\log Y)$-module structure on the $\corO_N$-module $p^*V$.

% the pull-back   we obtain a pull back functor $p^*$ from $L_N$-modules to $T_N(-\log Y)$-modules.
%On the other hand, we have also the restriction functor $\iota^*$ from the category of $T_N(-\log Y)$-modules to the category of $L_N$-modules. Clearly, since $p\circ \iota = \id_Y$, we have that $\iota^* \circ p^*(V,\nabla)$ is naturally isomorphic to $(V,\nabla)$ itself. 

Let $(V,\delta)$ be a $\atiyah(N)$-module, and $(E,\nabla) = p^*(V,\nabla)$ its pull-back. This is a logarithmic connection with poles along $Y$, so it defines a smooth flat connection $(E_U,\nabla_U)$ on $U = N\setminus Y$. By Theorem \ref{thm:RHtrivial}, one has the monodromy of $(E_U,\nabla_U)$ that is a representation $T_{\nabla_U}: \pi_1(N\setminus Y) \to \GL(r,\mathbb C)$. 

On the other hand, we have another notion of monodromy of a $\atiyah(N)$-module, given in Definition \ref{def:monodromy}. These two notions coincide:

\begin{proposition}
Let $(V,\delta)$ be a $\atiyah(N)$-module. Let us denote by $T_{\nabla_U}$ the monodromy of the flat connection obtained by restricting to $U = N\setminus Y$ the pull back $p^*(V,\delta)$, and by $T_\delta$ the monodromy of the $\atiyah(N)$-module as defined in Definition \ref{def:monodromy}.

Then $T_{\nabla_U}$ coincides with $T_\delta$.
\end{proposition}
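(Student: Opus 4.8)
The plan is to compute both monodromies through the integration of the relevant Lie algebroids and to show that they factor through one and the same homomorphism of vertex groups. First I would observe that, by Theorem \ref{thm:pull-back_atiyah}, the functor $p^*$ is nothing but the pull-back of modules along the canonical Lie algebroid morphism $T_N(-\log Y) \cong p^*\atiyah(N) \to \atiyah(N)$ lying over $p$. Restricting everything to the open set $U = N\setminus Y$, where $T_N(-\log Y)_{|U} = T_U$, this becomes a Lie algebroid morphism $q: T_U \to \atiyah(N)$ over $p_{|U}: U \to Y$, and by functoriality of the pull-back of modules the flat connection $(E_U,\nabla_U)$ is exactly $q^*(V,\delta)$.

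Next I would integrate $q$ using the second theorem of Lie, Theorem \ref{thm:LieII}. The $\textsc{s}$-connected $\textsc{s}$-simply connected groupoid integrating $T_U$ is the fundamental groupoid $\Pi_1(U) = \Pi_1(N\setminus Y)$, while $\mathfrak{C}(N\setminus Y)$ integrates $\atiyah(N)$; hence $q$ integrates to a unique groupoid morphism $\Phi: \Pi_1(N\setminus Y) \to \mathfrak{C}(N\setminus Y)$ over $p_{|U}$. I expect to identify $\Phi$ with the natural quotient $\Pi_1(N\setminus Y) \to \Pi_1(N\setminus Y)/\mathbb{C}^* = \mathfrak{C}(N\setminus Y)$ of Example \ref{ex:GP-groupoid}, by checking that this quotient differentiates to $q$ and invoking the uniqueness clause of Theorem \ref{thm:LieII}. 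On vertex groups $\Phi$ then restricts to the map $\pi_1(N\setminus Y, u) \to \mathfrak{C}(N\setminus Y)^y_y$ appearing in the first column of diagram \eqref{diagram1}, with $u$ a point of the fibre over $y$.

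Finally I would invoke the naturality of the module equivalence of Corollary \ref{cor:representations}: pulling back an $\atiyah(N)$-module along $q$ corresponds, after integration, to pulling back the associated $\mathfrak{C}(N\setminus Y)$-representation along $\Phi$. Writing $\rho: \mathfrak{C}(N\setminus Y)^y_y \to \GL(r,\mathbb{C})$ for the vertex representation attached to $(V,\delta)$ — the one whose precomposition with $\pi_1(N\setminus Y,u) \to \mathfrak{C}(N\setminus Y)^y_y$ is $T_\delta$ by Definition \ref{def:monodromy} — the restriction of the $\Pi_1(N\setminus Y)$-module $q^*(V,\delta) = (E_U,\nabla_U)$ to the vertex group $\pi_1(N\setminus Y,u)$ is therefore $\rho \circ \Phi^u_u$. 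Since the monodromy of a flat connection is exactly this vertex-group restriction (the groupoid reading of Theorem \ref{thm:RHtrivial} explained after its statement) and $\Phi^u_u$ is the map of diagram \eqref{diagram1}, we obtain $T_{\nabla_U} = \rho \circ \Phi^u_u = T_\delta$.

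I expect the functoriality step to be the main obstacle: one must verify that the equivalence of Corollary \ref{cor:representations} is natural with respect to Lie algebroid morphisms, i.e. that pull-back of modules commutes with integration of the underlying morphism. Concretely, this means showing that the groupoid representation integrating $q^*(V,\delta)$ is $\Phi^*$ of the representation integrating $(V,\delta)$; I would deduce it from the uniqueness in Theorem \ref{thm:LieII} applied to the composite $\Pi_1(N\setminus Y) \xrightarrow{\Phi} \mathfrak{C}(N\setminus Y) \to \operatorname{Iso}(S)$, combined with the identification of $\Phi$ with the natural quotient map and hence of $\Phi^u_u$ with the vertex map of \eqref{diagram1}.
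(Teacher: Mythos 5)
Your proposal follows essentially the same route as the paper: both reduce the claim to the groupoid quotient morphism $\Pi_1(N\setminus Y) \to \Pi_1(N\setminus Y)/\mathbb{C}^* = \mathfrak{C}(N\setminus Y)$ and the compatibility of pulling back modules with integrating the underlying morphism, the paper stating the key identity $p_\circ^*(V,\tau) = p^*(V,\delta)_{|U}$ directly where you propose to verify it via the uniqueness clause of the second theorem of Lie. Your version is a more detailed rendering of the same argument, and the functoriality step you flag as the main obstacle is precisely the point the paper's proof asserts without elaboration.
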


\begin{proof}
Let us examine more closely how $T_\delta$ is defined. For $(V,\delta)$ a $\atiyah(N)$-module, for any $g \in \mathfrak{C}(N\setminus Y)$ one can define the parallel transport along $g$ defined by $\delta$, which is an isomorphism $\tau_g: V(s(g)) \to V(t(g))$ (cf. \cite{cf-integrability}). This defines the $\mathfrak{C}(N\setminus Y)$-module structure on $V$ that is granted from the second theorem of Lie.

Now, by definition $\mathfrak{C}(N\setminus Y) = \Pi_1(N\setminus Y) / \mathbb{C}^*$, and this underlies the morphism of Lie groupoids $([\cdot],p_\circ):(\Pi_1(N\setminus Y) , N\setminus Y \to (\mathfrak{C}(N\setminus Y) , Y)$. This induces a functor
$$
p_\circ^* : \ \ \mathfrak{C}(N\setminus Y)\text{-modules} \longrightarrow  \Pi_1(N\setminus Y)\text{-modules}\ ,
$$
and the monodromy $T_\delta$ coincides with the monodromy of $p_\circ^*(V,\tau)$, which is a flat connection on $N\setminus Y$. Now, $p_\circ^*(V,\tau) = p^*(V,\delta)_{|U}$, so the monodromies $T_\delta$ and $T_{\nabla_U}$ coincide.
\end{proof}

\subsection{A Riemann-Hilbert correspondence for $\atiyah(N)$-modules}

We now give an important application of the correspondence established in the previous section: by using the correspondence of Theorem \ref{thm:LieII_atiyah} and some simple argument, we will prove a Riemann-Hilbert correspondence for $\atiyah(N)$-modules, and show that the Riemann-Hilbert correspondence for regular meromorphic connections may be obtained as a pull-back of the equivalent results for $\atiyah(N)$-modules.

\vspace{0.4cm}
Let us recall the results on the Riemann-Hilbert correspondence for regular meromorphic connections. The main point is the existence of logarithmic connections extending a given flat connection defined on the complement of a divisor. This may be stated as follows (cf. \cite{deligne}):
\begin{theorem} \label{deligne}
Let $X$ be a complex manifold, and $D$ a smooth divisor in $X$. Let $T$ be a representation $\pi_1(X\setminus D) \to \GL(r,\mathbb C)$, and $\tau: \mathbb{C}^* \to \mathbb C$ a splitting of the exact sequence 
$$
\xymatrix{
0 \ar[r] & \mathbb Z \ar[r] & \mathbb C \ar[r]^{e^{2\pi i \cdot}} & \mathbb C^* \ar[r] & 1\ . 
%0 \longrightarrow \mathbb Z \longrightarrow \mathbb C \stackrel{\exp(2\pi i \cdot)}{\longrightarrow} \mathbb C^* \longrightarrow 0\ . 
}
$$

Then there exist a unique flat logarithmic connection $(E^\tau, \nabla^\tau)$ on $X$ with poles along $D$ such that 
\begin{itemize}
\item[-] the monodromy of $(E^\tau,\nabla^\tau)_{|U}$ is $T$;
\item[-] the eigenvalues of the residue of $\nabla^\tau$ along $D$ live in the image of $\tau$.
\end{itemize}
\end{theorem}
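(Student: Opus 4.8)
The plan is to separate the problem into its behaviour on $U = X\setminus D$, where the statement is classical, and its behaviour transverse to $D$, where the splitting $\tau$ enters. On $U$ the representation $T$ determines, by Theorem \ref{thm:RHtrivial}, a flat connection $(E_U,\nabla_U)$ unique up to isomorphism, so everything reduces to understanding whether, and in how many ways, this can be extended to a flat logarithmic connection across $D$. I would treat existence and uniqueness of this extension separately, using the splitting $\tau$ to pin down the residue.

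For existence I would first work locally. Choosing coordinates $x_1,\dots,x_n$ with $D=\{x_1=0\}$, the punctured polydisc retracts onto the loop $\gamma_0$ around $D$; let $M=T(\gamma_0)\in\GL(r,\mathbb C)$ be its monodromy. Using $\tau$, define $R$ to be the unique endomorphism with $\exp(2\pi i R)=M$ and eigenvalues in $\mathrm{Im}(\tau)$ (the $\tau$-logarithm); since $R$ is a holomorphic function of $M$, it commutes with everything commuting with $M$. I would then build Deligne's \emph{canonical extension} by taking a local frame of multivalued flat sections of the form $x_1^{R}$ times a flat frame of $\nabla_U$, producing a logarithmic connection with $\res_D\nabla=R$; the eigenvalue normalisation makes the local models independent of the choices and lets them glue to a global $(E^\tau,\nabla^\tau)$ with monodromy $T$. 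In the language of this paper this local step is exactly the Riemann--Hilbert correspondence for $\atiyah(N_{D/X})$-modules: by Theorem \ref{thm:simpson} the restriction $(E^\tau,\nabla^\tau)_{|D}$ is an $\atiyah(N_{D/X})$-module, and by Theorem \ref{thm:LieII_atiyah} together with the push-out description of the vertex group in Lemma \ref{lem:vertex_group} the pair $(T,\tau)$ is precisely a representation of $\mathfrak{C}(N_{D/X}\setminus D)^y_y$, with the factor $\pi_1(N\setminus Y,u)$ receiving $T$, the factor $\mathbb C$ receiving $z\mapsto \exp(2\pi i z R)$, and the relation $\rho(\gamma_0)=\exp(2\pi i R)=M$ imposed by the quotient by $\mathbb Z$ being exactly the compatibility furnished by $\tau$.

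For uniqueness I would observe that any flat logarithmic extension with monodromy $T$ restricts on $U$ to $(E_U,\nabla_U)$, so two such extensions differ only in how the flat bundle is completed across $D$. By Proposition \ref{prop:residue-isotropy} the residue is the isotropy endomorphism of the restricted $\atiyah(N_{D/X})$-module, and by Proposition \ref{prop:rigidity} its conjugacy class is locally constant along $D$ and determined by $M$ up to the integer ambiguity in choosing a logarithm. Since $\mathrm{Im}(\tau)$ is a set of representatives for $\mathbb C/\mathbb Z\cong\mathbb C^*$, two of its elements cannot differ by a nonzero integer; this removes the ambiguity, fixes the residue, and supplies the non-resonance condition that makes the extension with this residue unique.

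The step I expect to be the main obstacle is the passage from the local canonical extensions to a single global logarithmic connection on $X$. While the transverse model near $D$ is entirely governed by the normal bundle $N_{D/X}$ and the groupoid $\mathfrak{C}(N_{D/X}\setminus D)$, the global monodromy $T$ lives on $\pi_1(X\setminus D)$, whose relation to the local fiber loops $\gamma_0$ must be tracked so that the residues chosen at different points of $D$ are mutually compatible and glue, and so that the $\tau$-normalisation is preserved under change of local trivialisation. Verifying this coherence is where the real work lies; the eigenvalue condition is what ultimately guarantees both the existence of a globally consistent choice and its uniqueness.
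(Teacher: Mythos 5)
Your outline is a correct sketch of the classical argument, but note that the paper does not prove this statement at all: Theorem \ref{deligne} is recalled verbatim from Deligne's book as background (``cf.\ \cite{deligne}''), and the only thing the paper establishes in its vicinity is the converse-flavoured Corollary after Theorem \ref{tau-atiyah}, namely that in the special case $X=N$, $D=Y$ the Deligne extension is recovered as $p^*(V^\tau,\delta^\tau)$ from the Riemann--Hilbert correspondence for $\atiyah(N)$-modules. So you are supplying a proof where the paper supplies a citation, and your route is essentially Deligne's original one: solve the problem on $U$ by ordinary Riemann--Hilbert, take the $\tau$-logarithm $R$ of the local monodromy around $D$, build the lattice spanned by $x_1^{R}$ times a multivalued flat frame, and use the non-resonance of $\mathrm{Im}(\tau)$ for uniqueness. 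Your translation of the pair $(T,\tau)$ into a representation of the vertex group $\mathfrak{C}(N_{D/X}\setminus D)^y_y$ via Lemma \ref{lem:vertex_group} is a nice observation and is exactly the mechanism the paper uses in the proof of Theorem \ref{tau-atiyah}; what the paper's approach buys, in the special case it covers, is that the gluing and uniqueness questions you worry about are absorbed into the second theorem of Lie (Theorem \ref{thm:LieII_atiyah}), whereas your approach works for arbitrary $(X,D)$ at the cost of having to do the local-to-global work by hand.

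Two steps of your outline are asserted rather than proved, and they are precisely the ones carrying the content. First, the gluing: you should verify that the local lattice generated by $x_1^{R}\cdot(\text{flat frame})$ is independent of the choice of flat frame (a change of frame by a constant matrix $C$ conjugates $M$, hence conjugates its $\tau$-logarithm, and the generated $\corO_X$-module is unchanged) and of the choice of coordinate $x_1$ cutting out $D$; only then do the local models patch into a global $E^\tau$. Second, uniqueness: fixing the conjugacy class of the residue via Proposition \ref{prop:rigidity} does not by itself identify two extensions. The standard completion is to take two lattices $E,E'$ with the same restriction to $U$, extend the identity to a flat meromorphic section of $\lHom(E,E')(*D)$, and use that the eigenvalues of the residue of the induced logarithmic connection on $\lHom(E,E')$ are differences of elements of $\mathrm{Im}(\tau)$, hence never nonzero integers, to conclude that this section and its inverse are holomorphic. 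You name the right non-resonance condition but do not run this argument; without it the uniqueness claim is incomplete.
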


One calls the logarithmic connections $(E^\tau, \nabla^\tau)$ {\em Deligne's extensions} of $T$, and the {\em canonical extension} of $T$ is the extension $(E^\tau,\nabla^\tau)$ with $\tau$ such that $\operatorname{Im}(\tau) = [0,1) \times \mathbb{R}i$. 

\begin{definition}
Let $\corO_X(*D) = \bigcup_{n\geq 0} \corO_X(nD)$ be the sheaf of meromorphic functions on $X$ with poles along $D$. A {\em meromorphic bundle} with poles along $D$ is a locally free $\corO_X(*D)$-module.

A {\em $\corO_X$-lattice} of a meromorphic bundle $\belE$ is a subsheaf $E\subseteq \belE$ which is a locally free $\corO_X$-module and such that $\belE = E \otimes_{\corO_X} \corO_X(*D)$.

A flat connection on a meromorphic bundle $\nabla: \belE \to \belE \otimes \Omega^1_X$ is said {\em regular} if there exist a $\corO_X$-lattice $E$ of $\belE$ such that $\nabla(E) \subseteq E \otimes \Omega^1_X(\log D)$.
\end{definition}

Given a logarithmic connection $(E,\nabla)$, one has a naturally associated regular flat connection on a meromorphic bundle, namely $\belE = E \otimes \corO_X(*D)$ with $\nabla(s \otimes f) = \nabla s \otimes f + s \otimes df$, where one is using the natural inclusion $E \otimes \Omega^1_X(\log D) \subseteq E \otimes \corO_X(D) \otimes \Omega^1_X$.

Given a representation $T: \pi_1(X\setminus D) \to \GL(r, \mathbb C)$ and two splittings $\tau$ and $\sigma$ the associated Deligne's extension $(E^\tau,\nabla^\tau)$ and $(E^\sigma,\nabla^\sigma)$ have the same associated regular flat meromorphic connection, and one obtains the following Riemann-Hilbert correspondence for regular singular connection (cf.  \cite{steenbrink}, Theorem 11.7):
\begin{theorem}[Riemann-Hilbert correspondence] \label{thm:RHDeligne}
Let $X$ be a complex manifold and $D$ a smooth divisor in $X$. Then there is a one-to-one correspondence between
\begin{itemize}
\item[-] conjugacy classes of representations of the fundamental group $\pi_1(X\setminus D)$;
\item[-] equivalence classes of regular flat meromorphic connections on $X$ with singularities along $D$.
\end{itemize}
\end{theorem}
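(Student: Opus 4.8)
The plan is to realize the correspondence as a pair of mutually inverse assignments, with Deligne's extension theorem (Theorem~\ref{deligne}) and the nonsingular correspondence (Theorem~\ref{thm:RHtrivial}) as the two essential inputs.

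\emph{Construction of the two maps.} From a conjugacy class of representations $T:\pi_1(X\setminus D)\to\GL(r,\C)$ I would fix a splitting $\tau$ of the sequence $0\to\Z\to\C\to\C^*\to 1$, apply Theorem~\ref{deligne} to obtain Deligne's extension $(E^\tau,\nabla^\tau)$, and attach to $T$ the regular flat meromorphic connection $\belE^\tau=E^\tau\otimes_{\corO_X}\corO_X(*D)$ with its induced connection; this is regular by construction, since $E^\tau$ is a logarithmic lattice. As recalled immediately before the statement, replacing $\tau$ by another splitting $\sigma$ yields a Deligne extension with the \emph{same} associated regular meromorphic connection, so the assignment descends to equivalence classes and is independent of $\tau$. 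Conversely, from a regular flat meromorphic connection $(\belE,\nabla)$ I would choose a logarithmic lattice $E$ (which exists by the definition of regularity), restrict to $U=X\setminus D$ to get a nonsingular flat connection $(E,\nabla)_{|U}$, and take its monodromy class via Theorem~\ref{thm:RHtrivial}; as every lattice restricts to the same connection on $U$, the result is independent of $E$.

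\emph{One composition is immediate.} Passing from $T$ to $\belE^\tau$ and back gives $T$ again: by the defining property in Theorem~\ref{deligne} the monodromy of $(E^\tau,\nabla^\tau)_{|U}$ is precisely $T$. This shows that the monodromy map is a left inverse of the Deligne map.

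\emph{The other composition is the hard part.} What remains, and where the theorem really has content, is to show that starting from a regular flat meromorphic connection $(\belE,\nabla)$, forming its monodromy $T$, and then forming $\belE^\tau$, one recovers $\belE$ up to equivalence. On $U$ both $(\belE,\nabla)_{|U}$ and $(E^\tau,\nabla^\tau)_{|U}$ are flat connections with the same monodromy $T$, so Theorem~\ref{thm:RHtrivial} supplies a flat isomorphism $\phi$ over $U$. The crux is that $\phi$, a priori only holomorphic on $U$, must extend to an isomorphism $\belE\to\belE^\tau$ of $\corO_X(*D)$-modules across $D$. This is exactly where regularity is used: regularity forces the horizontal sections of both connections to have at most moderate growth along $D$, so $\phi$ and $\phi^{-1}$ have at worst poles along $D$ and hence are meromorphic. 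I expect this moderate-growth estimate to be the only nontrivial point. In the spirit of this paper it can be checked locally: near $D$, a tubular neighborhood identifies with a neighborhood of the zero section of $N_{D/X}$, where by Theorem~\ref{thm:simpson} a logarithmic connection carries the structure of an $\atiyah(N_{D/X})$-module, and on a transverse disk it reduces to a single logarithmic pole with constant residue $R=\res_D\nabla$, whose horizontal frame is $z^{-R}$ --- manifestly of moderate growth. Patching these local comparisons shows $\phi$ is meromorphic across $D$, yielding $\belE\cong\belE^\tau$ and closing the correspondence.
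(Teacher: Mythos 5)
Your proof is the classical one, and it is not the route the paper takes: the paper does not actually prove Theorem \ref{thm:RHDeligne} in this generality --- it recalls it with a citation to Peters--Steenbrink --- and its own contribution is to re-derive the statement only in the special case $X=N$, $D=Y$ (the total space of a line bundle with the zero section as divisor), as a corollary of the Riemann--Hilbert correspondence for $\atiyah(N)$-modules. There the argument is purely Lie-theoretic: representations of $\atiyah(N)$ are identified, via the second theorem of Lie and the transitivity of the gauge-path groupoid $\mathfrak{C}(N\setminus Y)$, with representations of the vertex group $\mathfrak{C}(N\setminus Y)^y_y=(\pi_1(N\setminus Y,u)\times\C)/\Z$; the failure of injectivity of the monodromy map is then computed group-theoretically (Proposition \ref{prop:equivalent_monodromy}: two irreducible modules have the same monodromy if and only if they differ by a twist by $N^{\otimes a}$), and pushing forward through $p^*$ gives the meromorphic statement because the meromorphic connection associated to $p^*N^{\otimes a}$ is the trivial one $(\corO_X(*D),d)$. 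Your approach instead takes Deligne's extension theorem as the essential input and buys the general case for arbitrary $(X,D)$; the paper's approach avoids any growth estimate but only covers its special case.

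The one place where your argument is thinner than it should be is the extension of the flat isomorphism $\phi$ across $D$. The clean formulation is that $\phi$ is a flat section over $U$ of $\lHom(\belE,\belE^\tau)$, which is again a regular flat meromorphic connection, and regularity implies that its flat sections have moderate growth along $D$ and hence extend meromorphically. Your local justification --- that on a transverse disk the horizontal frame is $z^{-R}$ with $R$ the residue --- is not literally correct: a logarithmic connection on a disk has the form $d+A(z)\,dz/z$ and is holomorphically gauge-equivalent to the constant system $d+A(0)\,dz/z$ only when no two eigenvalues of $A(0)$ differ by a nonzero integer; in general one needs the shearing transformations of Fuchs theory, or at least the a priori moderate-growth estimate for solutions of regular singular systems. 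This is standard material, but it is genuinely the analytic content of the theorem and deserves more than the single sentence you give it.
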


\begin{remark}
These theorems hold also for $D$ a divisor with simple normal crossings, we state them in the case $D$ smooth since this is what we need in the following. Extension of the results of this paper to the simple normal crossing case is a work in progress.
\end{remark}

\vspace{0.4cm}
%Now we will show that these results, locally around the divisor $D$, follow directly from Theorem \ref{thm:pull-back_atiyah} and the second theorem of Lie: 
%in fact we show that if $X=N$ is a line bundle on a manifold $Y$, and we see $Y$ as a smooth divisor in $N$ via the zero section embedding, then the canonical extension 
We now prove the following analogue of Theorem \ref{deligne} for $\atiyah(N)$-modules:
\begin{theorem} \label{tau-atiyah}
Let $Y$ be a complex manifold, and $N$ a line bundle on it. Let $T$ be a representation of $\pi_1(N\setminus Y) \to \GL(r,\mathbb C)$, and $\tau: \mathbb{C}^* \to \mathbb C$ a splitting of $\exp: \mathbb C \to \mathbb C^*$.

Then there exist a unique $\atiyah(N)$-module $(V^\tau, \delta^\tau)$ such that 
\begin{itemize}
\item[-] the monodromy of $(V^\tau,\delta^\tau)$ is $T$;
\item[-] the eigenvalues of the isotropy endomorphism $\delta^\tau_R$ live in the image of $\tau$.
\end{itemize}
\end{theorem}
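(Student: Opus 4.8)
The plan is to reduce the statement, via Theorem \ref{thm:LieII_atiyah}, to a purely group-theoretic extension problem for representations of the vertex group $\mathfrak{C}(N\setminus Y)^y_y$, and then to solve that problem by means of a branch of the matrix logarithm determined by $\tau$. Recall from Theorem \ref{thm:LieII_atiyah} that rank $r$ representations of $\atiyah(N)$ (up to equivalence) correspond bijectively to conjugacy classes of homomorphisms $\rho: \mathfrak{C}(N\setminus Y)^y_y \to \GL(r,\mathbb{C})$, and from Definition \ref{def:monodromy} that the monodromy $T_\delta$ is precisely the restriction of $\rho$ to the subgroup $\pi_1(N\setminus Y,u)$ appearing in the exact sequence of diagram \eqref{diagram1}. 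Hence building $(V^\tau,\delta^\tau)$ amounts to extending the given $T:\pi_1(N\setminus Y,u)\to\GL(r,\mathbb{C})$ to a homomorphism $\rho$ on all of $\mathfrak{C}(N\setminus Y)^y_y$, subject to the prescribed constraint on the isotropy.

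First I would use Lemma \ref{lem:vertex_group} to write $\mathfrak{C}(N\setminus Y)^y_y = (\pi_1(N\setminus Y,u)\times\mathbb{C})/\mathbb{Z}$, where the $\mathbb{Z}$ is generated by $(\gamma_0,-1)$ and $\gamma_0$ is the fibre loop, which is central in $\pi_1(N\setminus Y,u)$. Under this description, a homomorphism $\rho$ out of the quotient is the same as a pair consisting of $T$ on the $\pi_1$-factor and a one-parameter subgroup $z\mapsto\exp(zM)$ on the $\mathbb{C}$-factor, subject to two requirements: the two images must commute, and they must agree on the quotiented $\mathbb{Z}$, i.e.\ $\exp(M)=T(\gamma_0)$. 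I would identify the matrix $M$ with the isotropy endomorphism $\delta_R$, since the isotropy $\corO_Y\hookrightarrow\atiyah(N)$ integrates to the $\mathbb{C}$-direction and $\delta_R=\frac{d}{dz}\rho([1,z])\big|_{z=0}$. The eigenvalue condition of the theorem then translates into asking that the eigenvalues of $M$ lie in the image of $\tau$; note that by Proposition \ref{prop:rigidity} these eigenvalues are independent of the base point, so prescribing them at $y$ suffices.

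The key construction is to set $M=\delta^\tau_R:=\log_\tau T(\gamma_0)$, the logarithm of $A:=T(\gamma_0)$ taken branch-by-branch with $\tau$: writing $A=A_s A_u$ for its Jordan decomposition, on each eigenspace of $A_s$ with eigenvalue $\lambda$ I declare the semisimple part of $M$ to act by $\tau(\lambda)$, and I take the usual nilpotent logarithm of the unipotent part $A_u$. Because $\tau$ is a section of $\exp$ and $\exp$ restricts to a bijection from nilpotent to unipotent matrices, this yields $\exp(M)=A$ with the eigenvalues of $M$ in the image of $\tau$. Since $M$ lies in the commutative algebra generated by $A$, and $\gamma_0$ is central in $\pi_1(N\setminus Y,u)$ so that $A=T(\gamma_0)$ commutes with the whole image of $T$, the matrix $M$ commutes with $T(\gamma)$ for every $\gamma$; this is exactly what is needed for $\rho(\gamma,z):=T(\gamma)\exp(zM)$ to descend from the direct product to a well-defined homomorphism on $\mathfrak{C}(N\setminus Y)^y_y$. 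Feeding $\rho$ back through Theorem \ref{thm:LieII_atiyah} produces the desired $(V^\tau,\delta^\tau)$, whose monodromy is $T$ by construction and whose isotropy $\delta^\tau_R=M$ has eigenvalues in the image of $\tau$.

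For uniqueness, I would argue that any $(V,\delta)$ with monodromy $T$ and isotropy eigenvalues in the image of $\tau$ gives, via Theorem \ref{thm:LieII_atiyah}, a homomorphism $\rho$ with $\rho|_{\pi_1}=T$ and with isotropy $M=\delta_R$ forced to satisfy $\exp(M)=T(\gamma_0)$ by the relation $[1,1]=[\gamma_0,0]$ in the vertex group. The subtle (if elementary) point is then the uniqueness of such a logarithm: comparing Jordan decompositions, $\exp(M_s)=A_s$ together with eigenvalues of $M_s$ in the image of $\tau$ forces $M_s=\log_\tau A_s$ (as $\tau$ selects a unique preimage of each eigenvalue and $M_s$ shares the eigenspaces of $A_s$), while $\exp(M_n)=A_u$ forces $M_n$ by injectivity of $\exp$ on nilpotents; hence $M=\log_\tau T(\gamma_0)$ and $\rho$ is uniquely determined. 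I expect the main obstacle to be precisely this interplay — producing a single logarithm $M$ of $T(\gamma_0)$ that simultaneously has eigenvalues in $\operatorname{Im}(\tau)$ and commutes with the entire monodromy image — which is what makes essential use of the centrality of the fibre loop $\gamma_0$ and of $\tau$ being a genuine section of $\exp$.
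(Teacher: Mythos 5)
Your proof follows essentially the same route as the paper's: both reduce via Theorem \ref{thm:LieII_atiyah} and Lemma \ref{lem:vertex_group} to extending $T$ over the vertex group by the formula $\tilde T([\gamma,z]) = T(\gamma)\cdot\exp(zM)$, where $M$ is the $\tau$-branch logarithm of $T(\gamma_0)$, with centrality of $\gamma_0$ guaranteeing well-definedness. The only divergence is in the uniqueness step, which you establish directly from the uniqueness of the $\tau$-logarithm (via the Jordan decomposition), whereas the paper defers it to Proposition \ref{prop:equivalent_monodromy}; your version is self-contained and equally valid.
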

\begin{proof}
Fix $y \in Y$ and $u \in (N\setminus Y)(y)$. We will construct $(V^\tau,\delta^\tau)$ from the associated representation of the vertex group $\mathfrak{C}(N\setminus Y)_y^y$ under the correspondence of Theorem \ref{thm:LieII_atiyah}. 
%Recall that $\mathfrak{C}(N\setminus Y)_y^y$ is described by the diagram \eqref{diagram1}. 

The splitting $\tau$ induces a splitting (that denote by ${\tau}$ as well) of the sequence 
$$
0 \longrightarrow \mathbb{Z}\cdot \id_V \longrightarrow \mathfrak{gl}(r,\mathbb C) \stackrel{e^{2\pi i \cdot}}{\longrightarrow} \GL(r,\mathbb C) \longrightarrow 1
$$
determined by ${\tau}(A)$ equal to the only $B\in \mathfrak{gl}(r,\mathbb C)$ such that $\exp(2\pi i B) = A$ and such that the eigenvalues of $B$ lie in the image of $\tau$ in $\mathbb C$. So we need to show that given $T$ and $\tau$ as in the statement of the proposition, there exist a unique group morphism $\tilde{T}: \mathfrak{C}(N\setminus Y)_y^y \to \GL(r, \mathbb C)$ such that:
\begin{enumerate}
\item the diagram
$$
\xymatrix{
\pi_1(N\setminus Y,u) \ar[r]^T \ar[d] & \GL(r,\mathbb C) \ar@{=}[d] \\
\mathfrak{C}(N\setminus Y)_y^y \ar[r]^{\tilde{T}} & \GL(r,\mathbb C)
}
$$
commutes;
\item the composition 
$$
\mathbb Z \hookrightarrow \pi_1(N\setminus Y, u) \stackrel{T}{\to} \GL(r,\mathbb C) \stackrel{\tau}{\to} \mathfrak{gl}(r,\mathbb C)
$$
coincides with the map $\mathbb Z \hookrightarrow \mathbb C \to \mathfrak{gl}(r,\mathbb C)$ induced by $\mathbb C \to \mathfrak{C}(N\setminus Y)_y^y \stackrel{\tilde{T}}{\to} \GL(r,\mathbb C)$. 

\end{enumerate}

Recall that $\mathfrak{C}(N\setminus Y)_y^y$ is a push out, and may be described as in Lemma \ref{lem:vertex_group}. Denote by $\gamma_0$ the image of $1$ via $\mathbb Z \hookrightarrow \pi_1(N\setminus Y,u)$, let $A_0 = T(\gamma_0)$ and $B_0 = \tau(A_0)$. Define 
$$
\tilde{T}([\gamma,z]) = T(\gamma) \cdot \exp(2\pi i z B_0)\ .
$$
It is easy to check that, since $\gamma_0$ is central in $\pi_1(N\setminus Y,u)$, this is a well defined morphism of groups, and satisfies the conditions 1. and 2. above.

The uniqueness of the $\atiyah(N)$-module $(V^\tau,\nabla^\tau)$ follows from Proposition \ref{prop:equivalent_monodromy} that we will show soon.
\end{proof}

Now, taking the pull-back via $p^*$ of the $\atiyah(N)$-modules $(V^\tau,\delta^\tau)$ one obtains a logarithmic connection that has monodromy $T_\delta$ and whose residue has eigenvalues that lie in the image of $\tau$. Then $p^*(V^\tau,\delta^\tau)$ must coincide with the Deligne's extension $(E^\tau,\nabla^\tau)$ of $T_\delta$. So we have shown:
\begin{corollary}
Let $N$ be a line bundle on $Y$, and $T$ and $\tau$ as before in the statement of Theorem \ref{tau-atiyah}. 

Then the Deilgne extension $(E^\tau, \nabla^\tau)$ of Theorem \ref{deligne} coincide with $p^*(V^\tau,\delta^\tau)$, where $(V^\tau,\delta^\tau)$ are the $\atiyah(N)$-modules of Theorem \ref{tau-atiyah}.
\end{corollary}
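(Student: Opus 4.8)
The plan is to verify that the logarithmic connection $p^*(V^\tau,\delta^\tau)$ satisfies the two defining properties of the Deligne extension $(E^\tau,\nabla^\tau)$ recorded in Theorem \ref{deligne}, and then to conclude by the uniqueness clause of that theorem. By Theorem \ref{thm:pull-back_atiyah} the pull-back Lie algebroid $p^*\atiyah(N)$ is identified with $T_N(-\log Y)$, and so, through the induced functor $p^*$, the object $p^*(V^\tau,\delta^\tau)$ is genuinely a flat logarithmic connection on $N$ with poles along the zero section $Y$; write it as $(E,\nabla)$. It then suffices to compute its monodromy and the eigenvalues of its residue.

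First I would treat the monodromy. Restricting $(E,\nabla)$ to $U = N\setminus Y$ produces a flat connection whose monodromy $T_{\nabla_U}$ equals, by the proposition comparing the two notions of monodromy, the monodromy $T_\delta$ of the $\atiyah(N)$-module $(V^\tau,\delta^\tau)$ in the sense of Definition \ref{def:monodromy}. By the construction carried out in Theorem \ref{tau-atiyah}, this monodromy is exactly $T$, so the first condition of Theorem \ref{deligne} is met.

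Next I would handle the residue. Here the key point is that $p\circ\iota = \id_Y$, so the restriction to $Y$ of the pull-back $p^*(V^\tau,\delta^\tau)$ returns the original $\atiyah(N)$-module $(V^\tau,\delta^\tau)$; note that the normal bundle $N_{Y/N}$ is $N$ itself, so this restriction is indeed an $\atiyah(N)$-module in the sense of Theorem \ref{thm:simpson}. By Proposition \ref{prop:residue-isotropy} the residue $\res_Y\nabla$ coincides with the isotropy endomorphism of this restriction, namely $\delta^\tau_R$. Hence the eigenvalues of $\res_Y\nabla$ are precisely those of $\delta^\tau_R$, which lie in the image of $\tau$ by construction in Theorem \ref{tau-atiyah}, verifying the second condition.

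Having matched both the monodromy and the location of the residue eigenvalues, the uniqueness assertion in Theorem \ref{deligne} forces $p^*(V^\tau,\delta^\tau)$ to coincide with $(E^\tau,\nabla^\tau)$. The step I expect to demand the most care is the residue computation: one must ensure that the relation $p\circ\iota=\id_Y$ identifies the restriction $\bigl(p^*(V^\tau,\delta^\tau)\bigr)_{|Y}$ with $(V^\tau,\delta^\tau)$ as $\atiyah(N)$-modules and not merely as bundles, and that Proposition \ref{prop:residue-isotropy} is legitimately applied in the present setting, where the divisor is the zero section and its normal bundle is $N$ itself. Everything else reduces to matching the two conditions against the characterization in Theorem \ref{deligne}.
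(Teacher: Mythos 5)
Your proposal is correct and follows essentially the same route as the paper: one checks that $p^*(V^\tau,\delta^\tau)$ is a flat logarithmic connection with monodromy $T$ and residue eigenvalues in $\operatorname{Im}(\tau)$, and then invokes the uniqueness clause of Theorem \ref{deligne}. The paper states this in a single sentence, while you supply the justifications (the monodromy comparison proposition, the identity $p\circ\iota=\id_Y$, and Proposition \ref{prop:residue-isotropy}) that the paper leaves implicit; these are exactly the right ingredients.
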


Now, let us characterize $\atiyah(N)$-modules having the same monodromy. Via the correspondence of Theorem \ref{thm:LieII_atiyah}, the $\atiyah(N)$-modules with trivial monodromy correspond to the representations of $\mathfrak{C}(N\setminus Y)^y_y$ that are induced by a representation $\mathbb{C}^* \to \GL(r,\mathbb{C})$. Representations of the form $\lambda \mapsto \lambda^a \cdot \id$ for $a\in \mathbb Z$ correspond to the $\atiyah(N)$-module $N^{\otimes a}$, and one obtains:

\begin{proposition} \label{prop:equivalent_monodromy}\ 

\begin{itemize}
\item[-]
The irreducible $\atiyah(N)$-modules with trivial monodromy are $N^{\otimes a}$, for $a\in\mathbb Z$.
\item[-]
Two irreducible $\atiyah(N)$-modules $V$ and $V'$ have the same monodromy if and only if $V' = V \otimes N^{\otimes a}$ for some $a\in\mathbb Z$.
\end{itemize}
\end{proposition}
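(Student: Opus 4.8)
The plan is to transport the entire statement through the equivalence of Theorem \ref{thm:LieII_atiyah} and argue purely with representations of the vertex group $\mathfrak{C}(N\setminus Y)^y_y$. By Lemma \ref{lem:vertex_group} this group is $(\pi_1(N\setminus Y,u)\times\mathbb{C})/\mathbb{Z}$, the quotient by the central subgroup generated by $(\gamma_0,-1)$, sitting in the exact sequence $1\to\pi_1(N\setminus Y,u)\to\mathfrak{C}(N\setminus Y)^y_y\to\mathbb{C}^*\to1$. As in the proof of Theorem \ref{tau-atiyah}, a holomorphic homomorphism $\tilde T:\mathfrak{C}(N\setminus Y)^y_y\to\GL(r,\mathbb{C})$ is the same datum as a pair $(T,B)$, where $T=\tilde T|_{\pi_1}$ is the monodromy and $B\in\mathfrak{gl}(r,\mathbb{C})$ is the infinitesimal generator of the central $\mathbb{C}$, subject to $\exp(2\pi i B)=T(\gamma_0)$ and to $B$ commuting with the image of $T$; explicitly $\tilde T([\gamma,z])=T(\gamma)\exp(2\pi i zB)$. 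A short computation then shows that tensoring by $N^{\otimes a}$, which corresponds to the character $[\gamma,z]\mapsto e^{2\pi i za}$ (the projection to $\mathbb{C}^*$ followed by $\lambda\mapsto\lambda^a$), replaces the datum $(T,B)$ by $(T,B+a\cdot\id)$ and leaves $T$ unchanged.

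For the first bullet I would reason as follows. An irreducible module with trivial monodromy corresponds to an irreducible $\tilde T$ with $T$ trivial, hence to a representation factoring through the abelian quotient $\mathbb{C}^*$. An irreducible representation of an abelian group is one-dimensional, and the holomorphic characters of $\mathbb{C}^*$ are precisely $\lambda\mapsto\lambda^a$ for $a\in\mathbb{Z}$; by the identification recalled just before the statement these are exactly the modules $N^{\otimes a}$.

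For the second bullet, the implication $V'=V\otimes N^{\otimes a}\Rightarrow$ same monodromy is immediate, since $N^{\otimes a}$ has trivial monodromy by the first bullet and tensoring a rank-$r$ representation with a trivial rank-one one does not change its restriction to $\pi_1$. The content is the converse. Suppose $V\leftrightarrow\tilde T\sim(T,B)$ and $V'\leftrightarrow\tilde T'\sim(T',B')$ are irreducible with the same monodromy; since representations are taken up to conjugacy, after conjugating $\tilde T'$ I may assume $T'=T$ literally. The key step is Schur's lemma: because $B$ commutes with $T(\pi_1)$ (as $\pi_1$ and $\mathbb{C}$ commute in the product) and trivially with the one-parameter subgroup $\exp(2\pi i zB)$, it commutes with the whole image of the \emph{irreducible} $\tilde T$, hence $B\in\End_{\mathfrak{C}(N\setminus Y)^y_y}(V)=\mathbb{C}\cdot\id$, so $B=\mu\cdot\id$; likewise $B'=\mu'\cdot\id$. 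The constraint $\exp(2\pi i B)=T(\gamma_0)=\exp(2\pi i B')$ forces $e^{2\pi i\mu}=e^{2\pi i\mu'}$, i.e. $a:=\mu'-\mu\in\mathbb{Z}$. Thus $(T',B')=(T,B+a\cdot\id)$, which is exactly the datum of $V\otimes N^{\otimes a}$, and therefore $V'\cong V\otimes N^{\otimes a}$.

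The step I expect to be the main obstacle is making the $(T,B)$ description rigorous and applying Schur's lemma cleanly: one must verify that the generator of the central $\mathbb{C}$ commutes with all of $T(\pi_1)$ (using the centrality of $\gamma_0$ and the direct-product structure behind Lemma \ref{lem:vertex_group}), and one must be careful that, since only conjugacy classes are intrinsic, the restrictions $T$ and $T'$ should first be aligned by a conjugation before the scalar generators $B$ and $B'$ are compared. Everything else is bookkeeping with the explicit homomorphism $\tilde T([\gamma,z])=T(\gamma)\exp(2\pi i zB)$.
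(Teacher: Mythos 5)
Your proposal is correct and follows the same route the paper takes: the paper's entire argument is the paragraph preceding the proposition, which passes to vertex-group representations via Theorem \ref{thm:LieII_atiyah} and identifies the trivial-monodromy ones with representations factoring through $\mathbb{C}^*$, with the characters $\lambda\mapsto\lambda^a$ corresponding to $N^{\otimes a}$. Your $(T,B)$ parametrization of representations of $\mathfrak{C}(N\setminus Y)^y_y$ and the Schur's-lemma step for the second bullet supply details the paper leaves implicit, and they are sound.
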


We can reformulate this proposition in the following way:
\begin{theorem}[Riemann-Hilbert correspondence for $\atiyah(N)$-modules]
Let $N$ be a line bundle over a complex manifold $Y$. Then there is a one-to-one correspondence between:
\begin{itemize}
\item[-] conjugacy classes of representations of the fundamental group $\pi_1(N\setminus Y)$;
\item[-] monodromy classes of $\atiyah(N)$-modules, where two irreducible $\atiyah(N)$-modules $V$ and $V'$ belong to the same monodromy class if and only if $V' = V \otimes N^{\otimes a}$ for some $a\in \mathbb Z$.
\end{itemize}

\end{theorem}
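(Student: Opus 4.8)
The plan is to realize the asserted bijection as the monodromy map of Definition \ref{def:monodromy}, and to obtain its well-definedness and bijectivity by packaging together Proposition \ref{prop:equivalent_monodromy} and Theorem \ref{tau-atiyah}. Concretely, I would introduce the assignment
$$
\Phi:\ \left\{\text{monodromy classes of irreducible } \atiyah(N)\text{-modules}\right\}\ \longrightarrow\ \left\{\text{conjugacy classes of }\pi_1(N\setminus Y)\text{-representations}\right\}
$$
sending the class of $(V,\delta)$ to the conjugacy class of its monodromy $T_\delta$, and then verify in turn that $\Phi$ is well defined, injective, and surjective. The guiding idea is that $T_\delta$ records precisely the part of an $\atiyah(N)$-module that survives after quotienting by the ambiguity coming from twisting by powers of $N$.

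Well-definedness and injectivity are both immediate from Proposition \ref{prop:equivalent_monodromy}. The \emph{if} direction of its second bullet says that $V$ and $V\otimes N^{\otimes a}$ have the same monodromy, so $\Phi$ is constant on each monodromy class and therefore descends to the quotient. The \emph{only if} direction says that two irreducible modules with conjugate monodromies already differ by a twist $N^{\otimes a}$; hence the fibers of the monodromy map are exactly the monodromy classes, which is the injectivity of $\Phi$.

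For surjectivity I would appeal to Theorem \ref{tau-atiyah}: given a representation $T:\pi_1(N\setminus Y)\to\GL(r,\mathbb C)$, fix any splitting $\tau$ of $\exp$ and form the $\atiyah(N)$-module $(V^\tau,\delta^\tau)$, whose monodromy is $T$ by construction. It only remains to see that this realization may be taken irreducible when $T$ is irreducible. Since $\gamma_0$ is central in $\pi_1(N\setminus Y,u)$, Schur's lemma forces $A_0=T(\gamma_0)$, and hence $B_0=\tau(A_0)$, to be scalar when $T$ is irreducible; thus the extended representation $\tilde T([\gamma,z])=T(\gamma)\exp(2\pi i z B_0)$ of $\mathfrak{C}(N\setminus Y)^y_y$ has exactly the invariant subspaces of $T$, and via the correspondence of Theorem \ref{thm:LieII_atiyah} it produces an irreducible $\atiyah(N)$-module with monodromy $T$.

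I expect the main obstacle to be not analytic but one of matching equivalence relations: one must check that the fibers of the monodromy map, a priori only described through representations of the vertex group $\mathfrak{C}(N\setminus Y)^y_y$, coincide exactly with the orbits of the $(-)\otimes N^{\otimes a}$ action and are no larger. This is precisely what the sharp \emph{if and only if} in Proposition \ref{prop:equivalent_monodromy} secures, so once that proposition and the realization result of Theorem \ref{tau-atiyah} are available, the present statement becomes essentially a reformulation of them, with the only genuine checks being the compatibility of irreducibility across the correspondence.
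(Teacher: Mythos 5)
Your proposal is correct and follows essentially the same route as the paper, which presents this theorem as a direct reformulation of Proposition \ref{prop:equivalent_monodromy} combined with the existence result of Theorem \ref{tau-atiyah}. Your additional Schur's-lemma check that irreducibility is preserved across the correspondence is a reasonable explicit verification of a point the paper leaves implicit.
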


We interpret this result as a Riemann-Hilbert correspondence, since it is a characterization of the $\atiyah(N)$-modules having a prescribed monodromy. Moreover, this implies Theorem \ref{thm:RHDeligne} in the case $X=N$ and $D= Y$, since for any $a\in \mathbb Z$ the pull-back of $N^{\otimes a}$ yields the trivial regular flat meromorphic connection $(\corO_X(*D), d)$.

\end{document}